\documentclass[preprint]{elsarticle}

\usepackage{hyperref}

\journal{Journal of \LaTeX\ Templates}

\usepackage{bm}
\usepackage{amsmath, amssymb, amsfonts,amsthm}
\usepackage{array}
\usepackage{epsfig}
\usepackage{enumerate}
\usepackage{bbm}
\usepackage{color}
\usepackage{float}
\usepackage[mathscr]{euscript}
\usepackage{mathtools}
\usepackage{graphicx,subcaption}
\usepackage{thmtools}
\usepackage{environ}
\usepackage{mathptmx}
\usepackage[draft]{todonotes} 
\usepackage{pgfplots}
\usetikzlibrary{matrix}

  \usetikzlibrary{positioning,calc,fit,shapes.geometric,patterns}
  \pgfdeclarelayer{background}
  \pgfdeclarelayer{foreground}
  \pgfsetlayers{background,main,foreground}
  \tikzstyle{vec}=[circle,inner sep=1pt,outer sep=-1pt,fill]
  \tikzstyle{border}=[thick]
  \tikzstyle{favborder}=[border,dotted]
  \tikzstyle{exclborder}=[border,dashed]
\usepackage{etex,etoolbox}
\usepackage{hyperref}

\hypersetup{colorlinks=true,linkcolor=blue,citecolor=magenta}

 \newcommand{\set}[2]{\left\{#1\colon#2\right\}}
\newcommand{\Reals}{\mathbb{R}} 
 
\newcommand{\N}{\mathbb{N}}

\newcommand{\pspace}{\varOmega}
\newcommand{\domain}{\mathcal{K}}
\newcommand{\bdomain}{\mathcal{C}}
\newcommand{\ydomain}{\mathcal{Y}}

\newcommand{\reals}{\Reals}
\newcommand{\gambles}{\mathcal{L}}
\newcommand{\posi}{\mathsf{posi}}

\newtheorem{theorem}{Theorem}
\newtheorem{definition}{Definition}
\newtheorem{proposition}{Proposition}
\newtheorem{example}{Example}

\newtheorem{lemma}{Lemma}







\bibliographystyle{model5-names}\biboptions{authoryear}



\begin{document}

\begin{frontmatter}

\title{Sum-Of-Squares for bounded rationality}
\tnotetext[mytitlenote]{Fully documented templates are available in the elsarticle package on \href{http://www.ctan.org/tex-archive/macros/latex/contrib/elsarticle}{CTAN}.}


\author{ Alessio Benavoli, Alessandro Facchini,  Dario Piga, Marco Zaffalon}
\address{Istituto Dalle Molle di Studi Sull'Intelligenza Artificiale (IDSIA), Lugano (Switzerland)}

\begin{abstract}
In the gambling foundation of probability theory, rationality requires that a subject should always (never) find desirable  all nonnegative (negative) gambles, because no matter the result of the experiment the subject never (always) decreases  her money.
Evaluating the nonnegativity of a gamble in infinite spaces is a difficult task. In fact, even if we restrict the gambles to be polynomials in $\reals^n$, the problem  of determining  nonnegativity is NP-hard. The aim of this paper is to develop a \textit{computable theory of desirable gambles}. Instead of requiring the subject to desire all nonnegative gambles, we only require her to desire gambles for which she can efficiently determine the nonnegativity (in particular  sum-of-squares  polynomials). 
We refer to this new criterion as \textit{bounded rationality}.
\end{abstract}

\begin{keyword}
Bounded rationality, theory of desirable gambles, sum-of-squares polynomials, polynomial gambles, updating.
\end{keyword}

\end{frontmatter}

\section{Introduction}
The subjective foundation of probability by \citet{finetti1937}    is  based on the notion   of rationality (coherence or equiv.\ self-consistency).
A subject is considered rational if she chooses her odds so that there is no bet that leads her to a sure loss (no Dutch books are possible). In this way, since odds are the inverse of probabilities,  de Finetti provided a justification of Kolmogorov's axiomatisation of probability as a rationality criterion on a gambling system.\footnote{De Finetti actually considered only finitely additive probabilities, while $\sigma$-additivity is assumed in Kolmogorov's axiomatisation.}

 Later \citet{williams1975} and \citet{walley1991} showed that it is possible to justify probability in a simpler and more elegant way. 
 This approach  is nowadays known as the \textit{theory of desirable gambles}.\footnote{In this paper, we will refer in particular to the theory of almost desirable gambles.}  To understand this gambling framework, we introduce a subject, Alice, and an experiment whose result  $\omega$ belongs to a   possibility space $\pspace$ (e.g., the experiment may be tossing a coin or determining the future value of a derivative instrument). When Alice is uncertain about the  result $\omega$ of the experiment, we can  model her beliefs about 
this value by asking her whether she accepts  engaging in certain risky transactions, called \textit{gambles}, whose outcome depends on the actual
outcome of the experiment $\omega$.  Mathematically, a gamble is a bounded real-valued function on $\pspace$, $g:\pspace 
\rightarrow \reals$, and if Alice accepts a gamble $g$, this means that she commits herself to 
receive  $g(\omega)$ utiles\footnote{A  theoretical unit of measure of utility, for indicating a supposed quantity of satisfaction derived from an economic 
transaction. It is expressed in some linear utility scale} if the experiment is performed and if the outcome of the experiment eventually happens 
to be the event $\omega \in \pspace$. Since $g(\omega)$ can be negative, Alice can also lose utiles  and hence the desirability of a gamble depends on Alice's beliefs about $\pspace$. Denote by $\gambles$ the set of all the gambles on
$\pspace$. Alice examines gambles in $\gambles$ and comes up with the subset $\domain$ of the gambles that she finds desirable. How can we
characterise the rationality of the assessments represented by $\domain$?

Two obvious  rationality criteria are:  Alice should always accept (do not accept)  gambles  that are nonnegative (negative), because no matter the result of the experiment she never (always) decreases  her utiles. But there is a world of difference between saying and doing.
For instance, let us  consider an infinite space of possibilities like $\Omega=\Reals^2$ and the gamble:
$g(x_1,x_2)=4x_1^4+4x_1^3x_2-3 x_1^2x_2^2+5x_2^4$. 
Should Alice accept this gamble? In practice the answer to this question does not only depend on Alice's beliefs about 
the value of  $x_1$ and $x_2$. We can in fact verify that the above polynomial can be rewritten as $(2  x_1^2 - 2 x_2^2 + x_1 x_2)^2 + ( x_2^2 + 2 x_1 x_2)^2$ and, thus, is always nonnegative. Hence, rationality implies that Alice should always accept it.
However, in these cases, we must also take into account the inherent difficulty of the problem faced by Alice when she wants to determine whether a given gamble is nonnegative or not.  In other words,  we need to quantify the computational complexity needed to address rationality.\\
The aim of this paper is to develop a \textit{computable theory of desirable gambles} by relaxing the  rationality criteria discussed above.
In particular, instead of requiring Alice to accept all nonnegative gambles, we only require Alice to accept gambles
for which she can efficiently determine the nonnegativity. 
We call this  new criterion \textit{bounded rationality}. The term bounded rationality was proposed by Herbert A. \citet{simon1957models} -- it
is the idea that when individuals make decisions, their rationality is limited by the tractability of the decision problem, the cognitive limitations of their minds, and the time available to make the decision. Decision-makers in this view act as ``satisficers'', seeking a satisfactory solution rather than an optimal one.
We do not propose our model as a realistic psychological model of Alice's behaviour, but we embrace the idea that the actual rationality of an agent is determined by its computational intelligence.

In this paper, we  exploit the  results on SOS polynomials  and theory-of-moments relaxation   to make numerical inferences in our theory of bounded rationality and to show that the theory of bounded rationality can be used to approximate the theory of desirable gambles. 
  At the same time,  we provide a gambling interpretation of SOS optimization. Some  applications of the theoretical ideas presented in this paper  can be found  in  \citet{lasserre2009moments,benavoli2016a,benavoli2016c}.
For instance, \citet{benavoli2016a} use this approach to derive a novel set-membership filtering algorithm  for nonlinear polynomial dynamical systems.
Although their approach is not directly formulated as a theory of bounded rationality,  SOS polynomials are used to propagate a set of probability measures in a computational efficient way through the dynamics of a nonlinear system. 
It is worth mentioning that a relaxation of the rationality criteria for desirability has also been investigated by  \citet{schervish2000sets,pelessoni20162}.
The first work focuses on  relaxations of the ``avoiding sure loss'' axiom, while  the second one  focuses    on  
two different criteria (additivity and positive scaling).

A preliminary version of this work appeared in \citep{Benavoli2017b}, but it includes an incorrect statement of duality. This has led us to re-evaluate the whole theory, resulting in a new definition of bounded rationality that we will present in the current manuscript.


\section{Theory of desirable gambles}
\label{sec:TDG}
%
In this section, we briefly introduce the theory of desirable gambles. Let us denote by $\gambles^+ =\{g\in\gambles: g\geq0\} $ the 
subset of the \emph{nonnegative gambles} and with $\domain \subset \gambles$  the subset of the gambles that Alice finds desirable. How can we
characterise the rationality of the assessments in $\domain$?

\begin{definition}
We say that  $\domain$ is a coherent set of (almost) desirable gambles (ADG) 
when it satisfies the following rationality criteria:
\begin{description}
 \item[A.1] If $g \in \gambles^+ $ then $ g\in \domain$ (Accepting Partial Gains);
 \item[A.2] If $ g\in \domain$ then $\sup g \geq0$ (Avoiding Sure Loss);
 \item[A.3] If $ g\in \domain$ then $\lambda g \in \domain$ for every $\lambda>0$ (Positive Scaling);
 \item[A.4] If $ g,h\in \domain$ then $g+h \in \domain$ (Additivity);
 \item[A.5] If $ g+\delta\in \domain$ for every $\delta>0$ then $g \in \domain$ (Closure).
\end{description}
\end{definition}
 The criterion A.5  does not actually follow from rationality and can be  omitted \citep{seidenfeld1990,walley1991,miranda2010c}. 
 However, it is useful to derive a connection between  the theory of desirable gambles and probability theory and for this reason we consider it in this paper. This connection will be briefly discussed in Section \ref{sec:dualityADG}.

To explain  these rationality criteria, let us introduce a simple example:  the toss of a fair coin  $\Omega=\{Head,Tail\}$.
 A gamble $g$ in this case has two components $g(Head)=g_1$ and $g(Tail)=g_2$.
 If Alice accepts $g$ then  she commits herself to receive/pay $g_1$ if  the outcome is Heads and   $g_2$ if Tails.
Since a gamble is in this case an element of $\mathbb{R}^2$,  $g=(g_1,g_2)$, we can plot the gambles  Alice accepts  in a 2D coordinate system with coordinate  $g_1$ and $g_2$. 
 
A.1 says that Alice is 
willing to accept any gamble $g=(g_1,g_2)$ that, no matter the result of the experiment, may increase her wealth without ever decreasing it, that is with $g_i \geq 0$ -- Alice always accepts the first quadrant, Figure~\ref{fig:coin0}(a). 
Similarly.  Alice does not accept  any gamble $g=(g_1,g_2)$ that will surely decrease her wealth, that is with  $g_i<0$. In other words, Alice always does not accept  the interior of the third quadrant, Figure~\ref{fig:coin0}(b).  This is the meaning of A.2. Then we ask Alice about $g=(-0.1,1)$ -- she loses $0.1$ if Heads and wins $1$  if Tails.
Since Alice knows that the coin is fair, she accepts this gamble as well as all the gambles of the form  $\lambda g$ with  $\lambda>0$, because this is just a ``change of currency'' (this is A.3).  Similarly, she accepts all the gambles $ g + h$ for any  $h \in \gambles^+$, since these gambles are even more favourable for her (this  is basically A.4). Now, we can ask Alice about $g=(1,-0.1)$ and the argument is symmetric to the above case.
We therefore obtain the following set of desirable gambles (see Figure~\ref{fig:coin0}(c)):
$\domain_2=\{g \in \mathbb{R}^2 \mid 10g_1+g_2\geq 0 \text{ and } g_1+10g_2\geq 0\}$.
Finally, we can ask Alice about $g=(-1,1)$ -- she loses $1$ if Heads and wins $1$  if Tails.
Since the coin is fair, Alice may accept or not accept this gamble. A.5 implies that she  must accept it (closure). A similar conclusion can be derived for the symmetric gamble $g=(1,-1)$. Figure~\ref{fig:coin0}(d) is her final set of desirable gambles about the experiment concerned with the toss of a fair coin, which in a formula becomes
$\domain_3=\{g \in \mathbb{R}^2 \mid g_1+g_2\geq 0\}$.
Alice does not accept any other gamble. In fact, if Alice  would also accept for instance $h=(-2,0.5)$ then, 
since she has also accepted $g=(1.5,-1)$, i.e., $g\in \domain_3$, she must also accept $g+h$ (because this gamble is also favourable to her).
However, $g+h=(-0.5,-0.5)$ is always negative, Alice always loses utiles in this case. In other words, by accepting $h=(-2,0.5)$ Alice incurs a sure loss -- she is irrational ({A.2} is violated).

In this example, we can see that Alice's set of  desirable gambles is a closed half-space, but this does not have  to be always  the case.
For instance, if Alice does not know anything about the coin, she should only accept nonnegative gambles: $\domain=\gambles^+$.
This corresponds to a state of complete ignorance, but all intermediate cases from complete beliefs on the probability of the coin to complete ignorance are possible. In general, $\domain$ is a pointed (whose vertex is the origin) closed convex cone that includes $\gambles^+$ and excludes the interior of the negative orthant (this follows by   A.1--A.5).

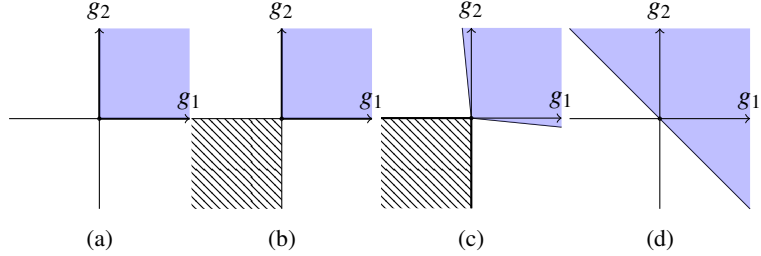
\begin{figure*}[t!]
    \centering
    \begin{subfigure}[t]{0.2\textwidth}
        \centering
\begin{tikzpicture}[scale=1.2]
    \draw[->] (-1,0) coordinate (xl) -- (1,0) coordinate (xu) node[above] {$g_1$};
    \draw[->] (0,-1) coordinate (yl) -- (0,1) coordinate (yu) node[above] {$g_2$};
        \draw (0,0) circle (0.5pt);
    \begin{pgfonlayer}{background}
      \draw[border] (0,0) -- (0,1) coordinate (a1away);
      \draw[border] (0,0) -- (1,0) coordinate (a2away);
      \fill[blue!50,  opacity=0.5] (0,0) -- (1,0) -| (1,1) --(0,1);
    \end{pgfonlayer}
  \end{tikzpicture}
        \caption{}
    \end{subfigure}%
    \begin{subfigure}[t]{0.2\textwidth}
        \centering
\begin{tikzpicture}[scale=1.2]
    \draw[->] (-1,0) coordinate (xl) -- (1,0) coordinate (xu) node[above] {$g_1$};
    \draw[->] (0,-1) coordinate (yl) -- (0,1) coordinate (yu) node[above] {$g_2$};
    \draw (0,0) circle (0.5pt);
        \begin{pgfonlayer}{background}
      \draw[border] (0,0) -- (0,1) coordinate (a1away);
      \draw[border] (0,0) -- (1,0) coordinate (a2away);
      \fill[blue!50,  opacity=0.5] (0,0) -- (1,0) -| (1,1) --(0,1);
    \end{pgfonlayer}
    \begin{pgfonlayer}{background}
      \fill[pattern=north west lines, pattern color=black] (0,0) -- (-1,0) -| (-1,-1) --(0,-1);
    \end{pgfonlayer}
  \end{tikzpicture}
        \caption{}
    \end{subfigure}
        \begin{subfigure}[t]{0.2\textwidth}
        \centering
 \begin{tikzpicture}[scale=1.2]
    \draw[->] (-1,0) coordinate (xl) -- (1,0) coordinate (xu) node[above] {$g_1$};
    \draw[->] (0,-1) coordinate (yl) -- (0,1) coordinate (yu) node[above] {$g_2$};
    \draw (0,0) circle (0.5pt);
    \begin{pgfonlayer}{background}
      \draw[border] (0,0) -- (0,-1) coordinate (a1away);
      \draw[border] (0,0) -- (-1,0) coordinate (a2away);
      \fill[pattern=north west lines, pattern color=black] (0,0) -- (-1,0) -| (-1,-1) --(0,-1);
    \end{pgfonlayer}
    \begin{pgfonlayer}{background}
      \draw[-] (0,0) -- (-0.1,1) coordinate (a1away);
      \draw[-] (0,0) -- (1,-0.1) coordinate (a2away);
      \fill[blue!50,  opacity=0.5] (0,0) -- (a1away) -| (a2away) --(0,0);
    \end{pgfonlayer}
  \end{tikzpicture}
          \caption{}
    \end{subfigure}
        \begin{subfigure}[t]{0.2\textwidth}
        \centering
  \begin{tikzpicture}[scale=1.2]
    \draw[->] (-1,0) coordinate (xl) -- (1,0) coordinate (xu) node[above] {$g_1$};
    \draw[->] (0,-1) coordinate (yl) -- (0,1) coordinate (yu) node[above] {$g_2$};
    \draw (0,0) circle (0.5pt);
    \begin{pgfonlayer}{background}
      \draw[-] (0,0) -- (-1,1) coordinate (a1away);
      \draw[-] (0,0) -- (1,-1) coordinate (a2away);
      \fill[blue!50,  opacity=0.5] (0,0) -- (a1away) -| (a2away) --(0,0);
    \end{pgfonlayer}
  \end{tikzpicture}
        \caption{}
    \end{subfigure}
    \caption{Alice's sets of coherent almost desirable gambles for the experiment of tossing a fair coin.  \label{fig:coin0}}
\end{figure*}
%
For the coin, the space of possibilities is finite and in this case Alice can check if a gamble $g$ is nonnegative by simply examining the elements of the vector $g$. In this paper, we are interested in  infinite spaces, in particular   $\Omega=\reals^n$, where applying the above rationality criteria is far from easy.  We aim at developing a theory of \textit{bounded rationality} for this case. Before doing that, we briefly recall the connection  between ADG and probability theory.

\section{Duality for ADG}
\label{sec:dualityADG}

Duality can be defined for general spaces of possibilities $\pspace$ \citep{walley1991}.  However, for the purpose of the present paper, we 
consider gambles that are bounded  real-valued function on $\reals^n$, i.e., $g:\reals^n\rightarrow \reals$.
Let $\mathcal{A}$ be an algebra of subsets of $\reals^n$  and $\mu:\mathcal{A} \rightarrow [-\infty,\infty]$ 
denotes a charge: that is $\mu$ is a  finitely additive set function of $\mathcal{A}$  \citep[Ch.11]{aliprantisborder}.
Let $\mathcal{A}_{\reals}$ denote the algebra generated in $\reals$ by the collection of all half open intervals
 \citep[Th.11.8]{aliprantisborder}:
\begin{theorem}
\label{th:boundedint}
 Every bounded $(\mathcal{A},\mathcal{A}_{\reals})$-measurable function is integrable w.r.t.\ any finite charge.
\end{theorem}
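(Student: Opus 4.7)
The plan is to construct the integral $\int f\,d\mu$ as the limit of integrals of simple functions uniformly approximating $f$, exploiting that the total variation $|\mu|(\reals^n)$ of a finite charge is finite. The first step is to define the integral on the class of $\mathcal{A}$-simple functions: for $\phi=\sum_{i=1}^k c_i \mathbbm{1}_{A_i}$ with $A_i\in\mathcal{A}$ pairwise disjoint, set $\int \phi\,d\mu \coloneqq \sum_{i=1}^k c_i \mu(A_i)$, and verify (standard) that this is independent of the representation, linear, and satisfies the bound $\bigl|\int\phi\,d\mu\bigr|\le \|\phi\|_\infty\,|\mu|(\reals^n)$.

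Next I would approximate. Assume $|f|\le M$. For each $n\ge1$ partition the range $[-M,M]$ into $2^n$ half-open intervals $I_{n,k}$ of length $2M/2^n$, form the preimages $A_{n,k}\coloneqq f^{-1}(I_{n,k})$ which lie in $\mathcal{A}$ because $I_{n,k}\in\mathcal{A}_{\reals}$ and $f$ is $(\mathcal{A},\mathcal{A}_{\reals})$-measurable, and define the simple function $\phi_n\coloneqq \sum_k t_{n,k}\,\mathbbm{1}_{A_{n,k}}$ with $t_{n,k}$ the left endpoint of $I_{n,k}$. By construction $\|f-\phi_n\|_\infty \le 2M/2^n$, so $\phi_n\to f$ uniformly.

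Applying the simple-function bound of the first step to the simple function $\phi_n-\phi_m$ gives
\begin{equation*}
\Bigl|\int \phi_n\,d\mu - \int \phi_m\,d\mu\Bigr| \;\le\; \|\phi_n-\phi_m\|_\infty\,|\mu|(\reals^n) \;\le\; \bigl(2M/2^n+2M/2^m\bigr)\,|\mu|(\reals^n),
\end{equation*}
so $\bigl(\int\phi_n\,d\mu\bigr)_n$ is Cauchy in $\reals$ and has a limit, which we declare to be $\int f\,d\mu$. A standard check shows the limit does not depend on the choice of the approximating sequence: if $\psi_n\to f$ uniformly through simple functions, the same bound gives $\bigl|\int\phi_n\,d\mu-\int\psi_n\,d\mu\bigr|\le \|\phi_n-\psi_n\|_\infty\,|\mu|(\reals^n)\to 0$. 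Finiteness of the limit (i.e.\ integrability) is immediate from $\bigl|\int\phi_n\,d\mu\bigr|\le M\,|\mu|(\reals^n)<\infty$.

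The main obstacle is not the approximation itself but the passage from $\mu$ to its total variation: the estimate on simple functions is really a bound in $|\mu|$, not in $\mu$, so the argument hinges on the fact that a finite charge has finite total variation on the whole space. I would therefore invoke (or, if the ambient text permits, briefly recall) the Jordan-type decomposition of a bounded charge into the difference of two nonnegative finite charges, on each of which the simple-function bound is trivial; linearity then extends the construction to $\mu$ itself, completing the proof.
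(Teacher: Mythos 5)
The paper itself offers no proof of this statement: it is imported verbatim from Aliprantis and Border (Th.~11.8) and used as a black box, so there is no in-text argument to compare yours against. What you write is the standard proof behind that citation --- integrate simple functions, pull back a partition of the range of $f$ through half-open intervals (which lie in $\mathcal{A}_{\reals}$, so their preimages lie in $\mathcal{A}$) to obtain a uniformly convergent sequence of simple functions, and pass to the limit through the estimate $\bigl|\int \phi\, d\mu\bigr| \le \|\phi\|_{\infty}\, |\mu|(\reals^n)$ --- and the approximation machinery is sound.

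The one genuine soft spot is exactly the one you flag at the end, and it cannot be waved through: for finitely additive set functions on an algebra, \emph{real-valued does not imply bounded variation}. On the finite--cofinite algebra of $\mathbb{N}$, the set function with $\mu(A)=|A|$ for $A$ finite and $\mu(A)=-|A^c|$ for $A$ cofinite is a finitely additive, everywhere finite charge with $|\mu|(\mathbb{N})=\infty$, and bounded measurable functions such as $k \mapsto 1/k$ fail to be integrable against it; so the statement is actually false if ``finite charge'' is read as ``real-valued signed charge'', which is how the surrounding text of the paper defines charges. The theorem holds either for nonnegative finite charges (the convention in the cited source, where $|\mu|(\reals^n)=\mu(\reals^n)<\infty$ is automatic and no Jordan decomposition is needed) or for signed charges of bounded variation (where the Jordan decomposition you invoke exists and reduces everything to the nonnegative case). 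Your sentence ``a finite charge has finite total variation'' should therefore be replaced by one of these two explicit hypotheses rather than asserted; with that repair the argument is complete, modulo the cosmetic point that your half-open partition of $[-M,M]$ should extend slightly past $M$ so that the value $M$ itself is covered.
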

Therefore, for any bounded $(\mathcal{A},\mathcal{A}_{\reals})$-measurable function $g$ and finite charge $\mu$ we can define  $\int gd\mu$, which we can interpret as
a linear functional  $L(\cdot):=\langle \cdot, \mu\rangle$ on  bounded $(\mathcal{A},\mathcal{A}_{\reals})$-measurable gambles $g$ (with some abuse of notation, we denote the set of  bounded $(\mathcal{A},\mathcal{A}_{\reals})$-measurable functions with $\gambles$).
We denote by $\mathcal{M}$ the set of all finite charges on $\mathcal{L} $ and  by $\mathcal{M}^+$ the set of nonnegative charges.
A linear functional of gambles is said to be \emph{nonnegative} whenever it  satisfies : $L(g) \geq 0$, for $g \in \mathcal{L}^+$. A nonnegative linear functional is called a \emph{state} if moreover it preserves the unitary constant gamble. Hence, in this context, since $\langle 1, \mu\rangle=\int 1 d\mu=1$,  the set of states $\mathscr{S}$ corresponds to the set of  all probability charges.
We define the \emph{dual} of a subset $\mathcal{D}$ of $\mathcal{L}$ as:\\
\begin{equation}
\mathcal{D}^\bullet=\left\{\mu\in \mathcal{M}: \int gd\mu \geq0, ~\forall g \in \mathcal{D}\right\}.
\end{equation}
Similarly, the dual of a subset $\mathcal{R}$ of $\mathcal{M}$ is the set:\\
\begin{equation}
\mathcal{R}^\bullet=\left\{g\in \mathcal{L}: \int gd\mu \geq0, ~\forall \mu \in \mathcal{R}\right\}.
\end{equation}
Note that in both cases $(\cdot)^\bullet$ is always a closed convex cone \citep[Lem.5.102(4)]{aliprantisborder}. Furthermore,  one has that $(\cdot)^\bullet{^\bullet}=(\cdot)$, whenever $(\cdot)$ is a closed convex cone  \citep[Th.5.103]{aliprantisborder}, and   $(\cdot)_1 \subseteq (\cdot)_2$ if and only if $(\cdot)_2^\bullet \subseteq (\cdot)_1^\bullet$ \citep[Lem.5.102(1)]{aliprantisborder}. In particular, whenever $(\cdot)_1$ and $(\cdot)_2$ are closed convex cones, $(\cdot)_1 \subsetneq (\cdot)_2$ if and only if $(\cdot)_2^\bullet \subsetneq (\cdot)_1^\bullet$.

Based on those facts, it is thus possible to verify that the dual of a coherent set of desirable gambles can actually be completely described in terms of a (closed convex) set of states (probability charges). 
In this aim, we start by the following observations.

\begin{proposition}\label{prop:czesc}
 It holds that 
 \begin{enumerate}
   \item $(\mathcal{L})^\bullet=\{0\}$ and $\mathcal{L}=(\{0\})^\bullet$;
 \item $(\mathcal{L}^+)^\bullet=\mathcal{M}^+$ and $\mathcal{L}^+=(\mathcal{M}^+)^\bullet$;
  \end{enumerate}
\end{proposition}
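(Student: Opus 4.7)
The plan is to verify the four equalities in a convenient order, using the bipolarity property $(\cdot)^{\bullet\bullet}=(\cdot)$ for closed convex cones (cited from Aliprantis--Border) to handle the last identity cheaply.

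First I would dispatch the two statements involving $\mathcal{L}$. For $\mathcal{L}=(\{0\})^\bullet$, note that $\int g\,d0=0\geq 0$ for every bounded measurable $g$, so the right-hand side is trivially all of $\mathcal{L}$. For $(\mathcal{L})^\bullet=\{0\}$, the inclusion $\{0\}\subseteq(\mathcal{L})^\bullet$ is immediate. Conversely, if $\mu\in(\mathcal{L})^\bullet$ then both $g$ and $-g$ lie in $\mathcal{L}$, so $\int g\,d\mu\geq 0$ and $-\int g\,d\mu\geq 0$, giving $\int g\,d\mu=0$ for every $g\in\mathcal{L}$. Applying this with $g=\mathbf{1}_A$ for an arbitrary $A\in\mathcal{A}$ (which is bounded and $(\mathcal{A},\mathcal{A}_{\Reals})$-measurable, hence in $\mathcal{L}$ by Theorem~\ref{th:boundedint}) yields $\mu(A)=0$, so $\mu=0$.

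Next I would prove $(\mathcal{L}^+)^\bullet=\mathcal{M}^+$. The inclusion $\mathcal{M}^+\subseteq(\mathcal{L}^+)^\bullet$ is just positivity of the integral with respect to a nonnegative charge. For the reverse, take $\mu\in(\mathcal{L}^+)^\bullet$; for any $A\in\mathcal{A}$ the indicator $\mathbf{1}_A$ belongs to $\mathcal{L}^+$, hence $\mu(A)=\int\mathbf{1}_A\,d\mu\geq 0$, so $\mu\in\mathcal{M}^+$.

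Finally, for $\mathcal{L}^+=(\mathcal{M}^+)^\bullet$, I would invoke bipolarity. The cone $\mathcal{L}^+$ is a closed convex cone (closedness in the supremum norm follows from pointwise stability of the nonnegativity inequality under uniform limits, and convexity/conicity are immediate). Therefore by \cite[Th.5.103]{aliprantisborder},
\begin{equation*}
\mathcal{L}^+ \;=\; \bigl((\mathcal{L}^+)^\bullet\bigr)^\bullet \;=\; (\mathcal{M}^+)^\bullet,
\end{equation*}
where the second equality uses the identity just established. The main obstacle, and really the only non-routine step, is trusting the bipolarity cleanly: a direct proof of $(\mathcal{M}^+)^\bullet\subseteq\mathcal{L}^+$ would require producing, for any $g$ with $g(\omega_0)<0$, a nonnegative charge with $\int g\,d\mu<0$, which forces one to argue with indicator charges on the $\mathcal{A}$-measurable set $\{g<-\epsilon\}$; routing through the bipolar theorem sidesteps this case analysis entirely.
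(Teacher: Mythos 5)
Your proof is correct and follows essentially the same route as the paper's: verify $\mathcal{L}=(\{0\})^\bullet$ and $(\mathcal{L}^+)^\bullet=\mathcal{M}^+$ directly via indicator gambles, then obtain $\mathcal{L}^+=(\mathcal{M}^+)^\bullet$ from bipolarity of closed convex cones. The only (harmless) deviation is that you also prove $(\mathcal{L})^\bullet=\{0\}$ directly using $\pm g$ and indicators, whereas the paper gets it for free from the bipolar identity applied to $\{0\}$.
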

\begin{proof} Since $(\cdot)^\bullet{^\bullet}=(\cdot)$, whenever $(\cdot)$ is a closed convex cone, in both cases it is enough to verify only one of the claims.
For the first item, the second claim is immediate.
For the second item, we verify the first claim.
The inclusion from right to left being clear, for the other direction  observe that: (i) $ g=I_{\{B\}}$ (with $I_B$ being the indicator function on $B \in \mathcal{A}$), is a nonnegative gamble and, therefore, is in $\mathcal{L}^+$; (ii) if $\mu$ is negative in $B \in \mathcal{A}$, i.e.,  then $\int I_{B}d\mu$ is negative too and, thus, $\mu$ cannot be in $(\mathcal{L}^+)^\bullet$.  
\end{proof}
\begin{proposition}\label{prop:czesc2}
Let $\domain $ be a closed convex cone. The following claims are equivalent
 \begin{enumerate}
   \item $\domain$ is coherent;
     \item $\domain\supseteq \mathcal{L}^+$ and $\domain \neq \mathcal{L}$;
   \item $(\domain)^\bullet\subseteq \mathcal{M}^+$ and $(\domain)^\bullet \neq \{0\}$.
  \end{enumerate}
\end{proposition}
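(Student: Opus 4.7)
My plan is to prove $(1)\Leftrightarrow(2)$ directly from the coherence axioms combined with the standing closed-convex-cone hypothesis, and then derive $(2)\Leftrightarrow(3)$ by pipelining the order-reversing duality properties recalled just before the proposition through Proposition~\ref{prop:czesc}. The closed-convex-cone assumption is crucial at the outset because it trivialises three of the five coherence axioms: A.3 (positive scaling) is conicity; A.4 (additivity) is closure under sums of a convex cone; and A.5 follows because $g+\delta\to g$ uniformly as $\delta\to 0^+$, so closedness of $\domain$ in the sup-norm yields $g\in\domain$. Thus coherence collapses onto A.1 together with A.2.

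For $(1)\Rightarrow(2)$, A.1 is precisely $\domain\supseteq\gambles^+$, while $\domain\neq\gambles$ follows since the constant gamble $-\mathbf{1}\in\gambles$ has $\sup(-\mathbf{1})=-1<0$ and A.2 therefore forbids it from lying in $\domain$. For $(2)\Rightarrow(1)$, A.1 is the first clause of (2), and the only delicate point is A.2. I would argue by contraposition: assuming some $g\in\domain$ satisfies $\sup g<0$, pick $\epsilon>0$ with $g\le-\epsilon$; for any $h\in\gambles$, set $\lambda=\|h\|_\infty/\epsilon$, so that $h-\lambda g\ge 0$ pointwise. Then $h-\lambda g\in\gambles^+\subseteq\domain$, and $\lambda g\in\domain$ by conicity, so additivity forces $h=(h-\lambda g)+\lambda g\in\domain$. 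This yields $\domain=\gambles$, contradicting the second clause of (2).

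For $(2)\Leftrightarrow(3)$, the first clauses translate into one another via Proposition~\ref{prop:czesc}(2), namely $(\gambles^+)^\bullet=\mathcal{M}^+$, and the order-reversing property of $(\cdot)^\bullet$ on closed convex cones: $\gambles^+\subseteq\domain$ iff $\domain^\bullet\subseteq(\gambles^+)^\bullet=\mathcal{M}^+$. For the second clauses I invoke the strict-inclusion version of duality recalled in the excerpt: since $\domain\subseteq\gambles$ holds automatically, $\domain\neq\gambles$ is the same as $\domain\subsetneq\gambles$, which by strict-inclusion duality is equivalent to $\gambles^\bullet\subsetneq\domain^\bullet$. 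Using $\gambles^\bullet=\{0\}$ from Proposition~\ref{prop:czesc}(1) and the fact that $0\in\domain^\bullet$ always, this reduces to $\domain^\bullet\neq\{0\}$.

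The main obstacle I anticipate is the A.2 direction of $(2)\Rightarrow(1)$, because nothing in the hypotheses of (2) explicitly refers to suprema; the contradiction must be engineered purely from the interplay between conicity, additivity, and the containment $\gambles^+\subseteq\domain$. The scaling trick that amplifies a single strict-sure-loss gamble until, together with $\gambles^+$, it absorbs the entire space of bounded gambles is the one piece of genuine content; everything else is bookkeeping on top of the already-stated duality lemmas.
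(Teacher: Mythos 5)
Your proposal is correct, and its overall architecture ((1)$\Leftrightarrow$(2) primal, then (2)$\Leftrightarrow$(3) via Proposition~\ref{prop:czesc} and anti-monotonicity of $(\cdot)^\bullet$) matches the paper's. The one step where you genuinely diverge is (2)$\Rightarrow$(1), specifically recovering A.2. The paper argues on the dual side: if some $g\in\domain$ has $\sup g<0$, then every $\mu\in\domain^\bullet\subseteq\mathcal{M}^+$ must satisfy $\int g\,d\mu\geq 0$, which forces $\mu=0$; hence $\domain^\bullet=\{0\}$, and the bipolar identity $\domain=\domain^{\bullet\bullet}=\{0\}^\bullet=\gambles$ gives the contradiction. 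You instead give a direct primal absorption argument: with $g\leq-\epsilon$ and $\lambda=\|h\|_\infty/\epsilon$ you write $h=(h-\lambda g)+\lambda g$ with $h-\lambda g\in\gambles^+\subseteq\domain$ and $\lambda g\in\domain$, so $\domain=\gambles$. Both are valid; yours is more elementary in that it avoids the bipolar theorem entirely for this step and makes visible exactly which structural ingredients (conicity, additivity, containment of $\gambles^+$, boundedness of gambles) do the work, while the paper's version is shorter given that the duality machinery has already been set up and reuses it uniformly across the whole proof. Note that your argument leans on $\|h\|_\infty<\infty$, i.e.\ on gambles being bounded, which is the standing assumption in the section where the proposition lives; the paper's dual argument would transfer more directly to the unbounded setting of Section~\ref{sec:unbounded gambles}. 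Your preliminary remark that the closed-convex-cone hypothesis subsumes A.3--A.5 is consistent with the paper's own convention (it explicitly equates the two in Definition~\ref{def:badg}), so no gap there either.
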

\begin{proof} (2) $\Leftrightarrow$ (3): From Proposition \ref{prop:czesc}, $(\domain)^\bullet = \{0\}$ if and only if $\domain = \mathcal{L}$, and $\domain\supseteq \mathcal{L}^+$ if and only if $(\domain)^\bullet\subseteq \mathcal{M}^+$.
\\ (1) $\Rightarrow$ (2): Assume $\domain$ is coherent. By A.1 $\domain \supseteq \mathcal{L}^+$ and by A.2 there is $g \in \mathcal{L}$ such that $\sup g < 0$ and $g \notin \domain$. 
\\ (2) $\Rightarrow$ (1): Let $\mathcal{L}^+ \subseteq \domain \subsetneq \mathcal{L}$. First of all, notice that, by Proposition \ref{prop:czesc}, $(\domain)^\bullet\subseteq (\mathcal{L}^+)^\bullet = \mathcal{M}^+$. 
Now, assume that $\domain$ is not coherent. This means A.2 fails, i.e. there is $g \in \mathcal{L}$ such that $\sup g < 0$ and $g \in \domain$. 
Hence, consider $\mu \in \mathcal{M}^+$ and pick $g \in \domain$ such that $\sup g < 0$. It holds that $\langle g, \mu\rangle \geq 0$ if and only if $\mu=0$, meaning that $(\domain)^\bullet=\{0\}$ and therefore, by Proposition \ref{prop:czesc} again, $\domain=\mathcal{L}$, a contradiction.
\end{proof}

Hence, the following theorem holds.
\begin{theorem}
\label{prop:dualcharges0}
The map
\[\domain \mapsto  \mathcal{P}:=\domain^\bullet \cap \mathscr{S} \]
establishes a bijection between coherent sets of desirable gambles and closed convex sets of states. 
\end{theorem}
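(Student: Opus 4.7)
The plan is to exhibit an explicit inverse for $\Phi:\domain\mapsto\domain^\bullet\cap\mathscr{S}$, namely $\Psi:\mathcal{P}\mapsto\mathcal{P}^\bullet$. The whole argument rests on (i) the bipolar identity $(\cdot)^{\bullet\bullet}=(\cdot)$ for closed convex cones, recalled just before the theorem from Aliprantis--Border, and (ii) Proposition \ref{prop:czesc2}, which characterises coherence of a closed convex cone $\domain$ by the sandwich $\mathcal{L}^+\subseteq\domain\subsetneq\mathcal{L}$.

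First I would show that $\Phi$ sends every coherent set of desirable gambles to a nonempty closed convex subset of $\mathscr{S}$. Closedness and convexity are immediate from $\mathcal{P}=\domain^\bullet\cap\mathscr{S}$ being an intersection of two closed convex sets; nonemptiness uses Proposition \ref{prop:czesc2}: there is some $0\neq\mu\in\domain^\bullet\subseteq\mathcal{M}^+$, and such a nonnegative charge cannot satisfy $\mu(\reals^n)=0$ (otherwise $0\le\mu(A)\le\mu(\reals^n)=0$ for every $A$), so $\mu/\mu(\reals^n)\in\mathcal{P}$. The same normalisation shows that $\domain^\bullet$ is exactly the positive cone generated by $\mathcal{P}$: every nonzero element of $\domain^\bullet$ factors as $\mu(\reals^n)\cdot(\mu/\mu(\reals^n))$ with $\mu/\mu(\reals^n)\in\mathcal{P}$. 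Hence $\domain=\domain^{\bullet\bullet}$ is fully recovered from $\mathcal{P}$, which gives injectivity of $\Phi$ and simultaneously identifies $\Psi=(\cdot)^\bullet$ as the natural candidate inverse.

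For surjectivity I would start from a closed convex $\mathcal{P}\subseteq\mathscr{S}$ and set $\domain:=\mathcal{P}^\bullet$. Coherence of $\domain$ follows again from Proposition \ref{prop:czesc2}: $\mathcal{L}^+\subseteq\mathcal{P}^\bullet$ because states are in $\mathcal{M}^+$, and $\mathcal{P}^\bullet\neq\mathcal{L}$ because the constant gamble $-1$ integrates to $-1$ against any state. It remains to verify $\domain^\bullet\cap\mathscr{S}=\mathcal{P}$. The inclusion $\supseteq$ is immediate. For $\subseteq$, assume for contradiction some $\mu\in\mathscr{S}\setminus\mathcal{P}$ lies in $\domain^\bullet$; Hahn--Banach separates the closed convex set $\mathcal{P}$ from $\mu$ by some $g\in\mathcal{L}$ and $c\in\reals$ with $\int g\,d\nu\ge c$ for all $\nu\in\mathcal{P}$ but $\int g\,d\mu<c$. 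Since all $\nu\in\mathcal{P}$ and $\mu$ are states, the bounded gamble $g-c$ satisfies $\int(g-c)\,d\nu\ge 0$, hence $g-c\in\mathcal{P}^\bullet=\domain$, while $\int(g-c)\,d\mu<0$ contradicts $\mu\in\domain^\bullet$.

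The main obstacle is the separation step. To apply Hahn--Banach one must fix a locally convex topology on $\mathcal{M}$ in which (a) $\mathcal{P}$ is closed and (b) the continuous linear functionals are exactly those of the form $\mu\mapsto\int g\,d\mu$ for $g\in\mathcal{L}$. Both requirements point to the weak-$*$ topology induced by $\mathcal{L}$, which is the same topology implicitly used when invoking Aliprantis--Border Theorem 5.103 for $(\cdot)^{\bullet\bullet}=(\cdot)$. Interpreting ``closed convex set of states'' and ``closed convex cone of gambles'' consistently in this weak-$*$/sup-norm duality is the only genuinely delicate point; once that is fixed, the rest of the argument is bookkeeping with the bipolar identity and Proposition \ref{prop:czesc2}.
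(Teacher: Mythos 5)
Your proof is correct and follows essentially the same route as the paper: injectivity via the observation that $\domain^\bullet$ is the cone $\mathbb{R}_+\mathcal{P}$ generated by the normalised states together with the bipolar identity, and surjectivity via Proposition~\ref{prop:czesc2} applied to $\mathcal{P}^\bullet=(\mathbb{R}_+\mathcal{P})^\bullet$. The only real difference is that where the paper closes the surjectivity step by citing $(\mathbb{R}_+\mathcal{P})^{\bullet\bullet}=\mathbb{R}_+\mathcal{P}$, you unpack that bipolar identity into an explicit Hahn--Banach separation; this is a presentational rather than structural difference, and your closing remark about fixing the weak-$*$ topology correctly identifies the one delicate point that both versions must address.
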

\begin{proof} The proof is analogous to that by \citet[Th.4]{pmlr-v62-benavoli17b}.
 Let $\domain$ be a coherent set of desirable gambles. By Proposition \ref{prop:czesc2}, we get that $\domain^\bullet$ is a closed convex cone included in $\mathcal{M}^+$ that does not reduce to the origin.  
Thus, after normalisation, $\mathcal{P}$ is nonempty. Preservation of closedness and convexity by finite intersections yields that $\mathcal{P}$ is closed and convex. Furthermore $\mathbb{R}_+\mathcal{P}=\domain^\bullet$, and therefore $\domain= (\mathbb{R}_+\mathcal{P})^\bullet$, where $\mathbb{R}_+\mathcal{P}:=\{ \lambda\mu : \lambda \geq 0, \mu \in \mathcal{P}\}$, meaning that the map is an injection.
We finally verify that the map is also a surjection. To do this, let $\mathcal{P}$ be a non empty closed convex set of probability charges. It holds that $\mathbb{R}_+\mathcal{P}$ is a closed convex cone included in $\mathcal{M}^+$ different from $\{0\}$. Again by Proposition \ref{prop:czesc2},  we  conclude that the dual $(\mathbb{R}_+\mathcal{P})^\bullet$ of $\mathbb{R}_+\mathcal{P}$ is a coherent set of desirable gambles and $\mathcal{P}=\mathbb{R}_+\mathcal{P}\cap \mathscr{S}= (\mathbb{R}_+\mathcal{P}){{}^\bullet{^\bullet}}\cap \mathscr{S}$. 
\end{proof}

We therefore identify the dual of $\domain$ with
\begin{equation}
 \mathcal{P}=\left\{\mu\in \mathcal{M}^+: \int gd\mu \geq0, ~\int d\mu =1,~\forall g \in \domain\right\},
\end{equation} 
which is a closed convex-set of probability charges.

\subsection{Unbounded gambles}
\label{sec:unbounded gambles}
In this paper, we will also consider unbounded real-valued functions in $\reals^n$
and, therefore, we need to introduce another definition of measurability.
Let  $\mathcal{M}$ denote the space of finite signed Borel measures on $\reals^n$, whose positive cone
$\mathcal{M}^+$ is the space of  finite Borel measures $\mu$ on  $\reals^n$.
Let $\gambles$ be the set of  real-valued functions on  $\reals^n$ that are integrable with respect to every measure 
$\mu \in \mathcal{M}^+$ and $\gambles^+$ be the cone of non-negative integrable functions.
With an abuse of terminology, we call these functions gambles.\footnote{For a more general extension of the theory of desirable gambles to unbounded gambles see  \citet{troffaes2003extension}.}
Also in this case, we can  define a cone of desirable gambles in $\gambles$ satisfying the properties A.1--A.5.
By defining the set of states as $\mathscr{S}:=\{\mu \in \mathcal{M}: \int d\mu =1\}$, we can  prove the following.
\begin{theorem}
\label{prop:dualcharges}
The map
\[\domain \mapsto  \mathcal{P}:=\domain^\bullet \cap \mathscr{S} \]
establishes a bijection between coherent sets of desirable gambles and closed convex sets of states. 
\end{theorem}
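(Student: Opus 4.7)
The plan is to mirror the bounded-case argument by re-establishing analogues of Propositions \ref{prop:czesc} and \ref{prop:czesc2} in the unbounded setting, and then repeat the bijection argument of Theorem \ref{prop:dualcharges0} essentially verbatim.

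First I would verify $(\mathcal{L}^+)^\bullet = \mathcal{M}^+$ and dually $(\mathcal{M}^+)^\bullet = \mathcal{L}^+$. The inclusion $\mathcal{M}^+ \subseteq (\mathcal{L}^+)^\bullet$ is immediate. For the reverse, if $\mu \in \mathcal{M}\setminus\mathcal{M}^+$, by the Hahn decomposition there is a Borel set $B$ with $\mu(B)<0$; the indicator $I_B$ is bounded, hence integrable against every $\nu \in \mathcal{M}^+$, so $I_B\in \mathcal{L}^+$, and $\int I_B\,d\mu = \mu(B) < 0$, ruling $\mu$ out of $(\mathcal{L}^+)^\bullet$. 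The same indicator-function trick shows $(\mathcal{L})^\bullet=\{0\}$.

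Second, I would establish the unbounded analogue of Proposition \ref{prop:czesc2}: for a closed convex cone $\domain$, coherence is equivalent to $\mathcal{L}^+\subseteq \domain \subsetneq \mathcal{L}$, which in turn (via the duality of the first step and the order-reversing property of $(\cdot)^\bullet$) is equivalent to $\{0\}\neq \domain^\bullet \subseteq \mathcal{M}^+$. The implication (1)$\Rightarrow$(2) uses A.1 and A.2 exactly as before: A.2 forbids any gamble with $\sup g<0$ from being desirable, so $\domain\neq\mathcal{L}$. For (2)$\Rightarrow$(1), if coherence failed then A.2 would fail, giving some $g\in\domain$ with $\sup g<0$; pairing with any nonzero $\mu\in\mathcal{M}^+$ forces $\int g\,d\mu<0$, so $\domain^\bullet=\{0\}$ and hence (by the bipolar identity) $\domain=\mathcal{L}$, a contradiction.

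Finally I would repeat the Theorem \ref{prop:dualcharges0} argument: given a coherent $\domain$, the dual $\domain^\bullet$ is a closed convex cone in $\mathcal{M}^+$ with $\domain^\bullet \neq \{0\}$. Normalising by total mass yields a nonempty closed convex $\mathcal{P}=\domain^\bullet\cap\mathscr{S}$ of probability measures, with $\mathbb{R}_+\mathcal{P}=\domain^\bullet$ and hence $\domain=(\mathbb{R}_+\mathcal{P})^\bullet$, proving injectivity. For surjectivity, given a nonempty closed convex $\mathcal{P}\subseteq\mathscr{S}$, the cone $\mathbb{R}_+\mathcal{P}$ lies in $\mathcal{M}^+\setminus\{0\}$, so its dual is coherent and intersects $\mathscr{S}$ back in $\mathcal{P}$.

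The main obstacle is justifying the bipolar identity $(\cdot)^{\bullet\bullet}=(\cdot)$ for closed convex cones in this unbounded setting, since the cited \citet[Th.5.103]{aliprantisborder} applies to a dual pair of locally convex spaces. One must equip $\mathcal{L}$ and $\mathcal{M}$ with the weak topologies induced by the bilinear pairing $\langle g,\mu\rangle=\int g\,d\mu$ and check that this pairing is well-defined (by the integrability assumption on $\mathcal{L}$) and separating — the indicator argument above again provides enough test functions. A subsidiary delicate point is verifying that $\mathbb{R}_+\mathcal{P}$ is closed in the weak topology; this follows because $\mathscr{S}$ is weakly closed (it is cut out by the single linear condition $\int d\mu=1$) and $\mathcal{P}$ is assumed closed and convex, so the cone it generates is closed as well.
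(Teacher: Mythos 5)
Your plan is essentially the paper's own: the paper proves Theorem~\ref{prop:dualcharges} with the single sentence that the argument is identical to that of Theorem~\ref{prop:dualcharges0}, and you propose exactly that transplantation, with the added (and welcome) care of re-proving the analogues of Propositions~\ref{prop:czesc} and~\ref{prop:czesc2} via the indicator/Hahn-decomposition argument and of making explicit the dual pair needed for the bipolar identity. The one step where your justification does not hold up is the closedness of $\mathbb{R}_+\mathcal{P}$ in the surjectivity direction: it is \emph{not} true in general that the cone generated by a closed convex subset of a hyperplane is closed --- in $\reals^2$ the set $\{(x,1):x\in\reals\}$ is closed and convex inside $\{y_2=1\}$, yet the cone it generates is the open upper half-plane together with the origin. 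In the bounded/charge setting of Theorem~\ref{prop:dualcharges0} this is rescued because the set of all probability charges is weak$^*$-compact (Banach--Alaoglu in the $ba$ dual of the bounded gambles), so a closed $\mathcal{P}$ is compact and $\mathbb{R}_+\mathcal{P}$ is then genuinely closed; but in the unbounded setting of the present theorem the set of probability measures on $\reals^n$ is not compact in the topology $\sigma(\mathcal{M},\mathcal{L})$ (mass can escape to infinity), so this step needs a real argument or a restriction on which closed convex sets of states are admitted. The paper asserts this closedness without proof as well, so the gap is inherited rather than introduced, but your stated reason for it (``the cone it generates is closed as well'') is the one point in the proposal that is actually false as written.
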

The proof is identical to that for Theorem \ref{prop:dualcharges0}. Here, $\mathcal{P}$ is the set of all probability measures on  $\reals^n$.
In the sequel, we will refer to this duality when we will consider unbounded gambles.

%


\section{Finite assessments}
The goal of this and next sections is to define a practical notion of desirability.
To this end, we first assume that the set of gambles that Alice finds desirable is finitely  generated. By this, we mean that there is 
a finite set of gambles $G=\{ g_1, \dots, g_{|G|}\}$ such that $\domain=\posi(G \cup \gambles^+)$, where  the $\posi$ of a set $A \subset \gambles$ is
defined as
\begin{equation}
\label{eq:posi}
\posi(A):=\set{\sum_{j=1}^{|A|}\lambda_jg_j}{g_j\in A,\lambda_j\geq 0},
\end{equation} 
and where by $|A|$ we denote the cardinality of the set $A$.
By using this definition, it is clear that whenever $\domain$ is finitely generated, it includes all nonnegative gambles
and satisfies A.3, A.4 and A.5. Once Alice has defined $G$ and so $\domain$ via $\posi$,  ADG assumes that she is able to perform the following operations: to check that $ \domain$ avoids sure loss (A.2 is also satisfied);  to determine the implication of desirability.
 It is easy to show that all above operations in ADG imply the assessment of the nonnegativity of a gamble.

\begin{proposition} 
\label{th:noneg} Given a finite set $G \subset \gambles$ of desirable gambles, the set $\posi(G \cup \gambles^+)$  includes the gamble $f$ if and only if  there exist $\lambda_j\geq 0$ for $j=1,\dots,|G|$ such that
\begin{equation}
\label{eq:feasibil0}
f- \sum\limits_{j=1}^{|G|}\lambda_jg_j\geq0.
\end{equation} 
\end{proposition}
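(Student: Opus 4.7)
The plan is to prove both directions by directly unfolding the definition of $\posi$ given in \eqref{eq:posi}; no heavy machinery is required since the statement is essentially a syntactic reshuffling.

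For the easy direction ($\Leftarrow$), suppose we have nonnegative scalars $\lambda_1,\dots,\lambda_{|G|}$ such that $h := f - \sum_{j=1}^{|G|}\lambda_j g_j \geq 0$. Then $h \in \gambles^+$, so writing
\[
f \;=\; h + \sum_{j=1}^{|G|}\lambda_j g_j
\]
exhibits $f$ as a nonnegative linear combination of one element from $\gambles^+$ (namely $h$, with coefficient $1$) together with elements of $G$. By \eqref{eq:posi}, this places $f$ in $\posi(G \cup \gambles^+)$.

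For the forward direction ($\Rightarrow$), assume $f \in \posi(G \cup \gambles^+)$. By \eqref{eq:posi} there is a finite index set and nonnegative scalars expressing $f$ as a conic combination of elements drawn from $G \cup \gambles^+$. Separating the contributions from $G$ and from $\gambles^+$, and aggregating the (possibly several) coefficients attached to each $g_j$ into a single $\lambda_j \geq 0$ (setting $\lambda_j = 0$ for indices $j$ whose $g_j$ is absent from the decomposition), we obtain
\[
f \;=\; \sum_{j=1}^{|G|}\lambda_j g_j \;+\; \sum_{k} \mu_k h_k,
\]
with $\mu_k \geq 0$ and $h_k \in \gambles^+$. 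The only remaining observation is that $\gambles^+$ is itself a convex cone, so $\sum_k \mu_k h_k \geq 0$. Rearranging yields $f - \sum_{j=1}^{|G|}\lambda_j g_j \geq 0$, i.e., \eqref{eq:feasibil0}.

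The sole point that deserves attention is the bookkeeping in the forward direction: the definition of $\posi$ permits repetition of generators and does not a priori pair one coefficient with each $g_j$, so one must argue that collecting repeated terms and padding missing indices with zero coefficients is legitimate. Both steps are immediate because nonnegative reals are closed under addition and because $0 \cdot g_j = 0 \in \gambles^+$; there is no substantive obstacle.
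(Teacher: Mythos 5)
Your proof is correct and matches the paper's approach: the paper leaves Proposition~\ref{th:noneg} unproved as an easy consequence of the definition of $\posi$, but its proof of the analogous BADG statement (Theorem~\ref{th:Bnoneg}) is exactly this argument — unfold the conic combination, separate the $G$-terms from the cone terms, and use closure of the cone under nonnegative combinations. Your extra care about aggregating repeated coefficients and padding with zeros is a harmless refinement the paper glosses over.
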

  There are two  subcases of \eqref{eq:feasibil0} that are particularly interesting.   
 The first  is when  $f=h-\lambda_0$ for some $\lambda_0 \in \mathbb{R}$. That allows us to define the concept of lower prevision
 \citep{walley1991,miranda2008a}.
  \begin{definition}
  Assume that $\domain=\posi(G \cup \gambles^+)$ is an ADG, then the solution of the following problem
  \begin{equation}
\label{eq:lowerpre}
{\begin{array}{l}
\sup\limits_{\lambda_0 \in \mathbb{R},\lambda_j\geq0} \lambda_0, ~~~~s.t.~~~h-\lambda_0- \sum\limits_{j=1}^{|G|}\lambda_jg_j\geq0,
\end{array}}
\end{equation} 
is called the lower prevision of $h$ and denoted as $\underline{E}[h]$.
 \end{definition}
 From a behavioural point of view, we can reinterpret this by saying that Alice is willing to buy gamble $h$ at price $\lambda_0$, since she is giving
away $\lambda_0$ utiles while gaining $h$. The lower prevision is the supremum buying price  for $h$. We can equivalently define the upper prevision of $h$ as $\overline{E}[h]=-\underline{E}[-h]$. From Section \ref{sec:dualityADG}, it can be easily shown that $\underline{E}[h]$ is the lower expectation of $h$
computed w.r.t.\ the probability charges (or measures if we consider the case in Section \ref{sec:unbounded gambles}) in $ \mathcal{P}$. {As a matter of fact, the dual of \eqref{eq:lowerpre} is the moment problem: 
  \begin{equation}
\label{eq:lowerpredual}
\begin{array}{l}
\inf\limits_{\mu \in \mathcal{M}^+} \int{hd\mu} ~~~~~~s.t.~~~\int{ d\mu}=1, ~~ \int{g_j d\mu}\geq0 \text{ for $j=1,\dots,|G|$}.
\end{array}
\end{equation} 
 
 \begin{example}[Markov's inequality]
 \label{eq:markov1}
  Consider the nonnegative function $x$ on $\Omega=[0,x_{max}]$ with $x_{max} \in \reals^+$ and assume that Alice 
  finds the gambles $g=x-m$ and $-g=-x+m$ to be desirable for some $m\in \Omega$, i.e., 
  $$
  G=\{g,-g\}.
  $$
  Consider the event $x\geq u$ for some $u \in \mathbb{R}^+$. Our goal is to determine
Alice's upper prevision (infimum selling price) for this event, equivalently for the gamble $I_{\{[u,\infty)\}}$.
We  need to apply  \eqref{eq:lowerpre} which in this case can be written as:
\begin{equation}
 \label{eq:lowerpreMI}
\begin{array}{l}
\overline{E}(I_{\{[u,\infty)\}})=\inf\limits_{\lambda_{1j}\leq0,\lambda_0\in \mathbb{R}} ~~\lambda_0\\
  s.t.\\
 \lambda_0+ \lambda_{11}(x-m) - \lambda_{12}(x-m) \geq I_{\{[u,\infty)\}}(x), ~~\forall x \in \Omega.\\
 \end{array}
\end{equation}
By defining $\lambda_1=\lambda_{11}-\lambda_{12}$, which now spans $\mathbb{R}$, we can rewrite  \eqref{eq:lowerpreMI} as:
\begin{equation}
 \label{eq:natelowmark1}
\begin{array}{l}
  \overline{E}(I_{\{[u,\infty)\}})=\inf\limits_{\lambda_j\in \mathbb{R}} ~~\lambda_0\\
  s.t.\\
 \lambda_0+ \lambda_1 (x-m)  \geq 1, ~~\forall x \in \Omega:x\geq u,\\
 \lambda_0+ \lambda_1 (x-m)  \geq 0, ~~\forall x \in \Omega:x< u.\\
 \end{array}
\end{equation}
It can be  seen that $\overline{E}(I_{\{[u,\infty)\}})=1$ whenever $u\leq0$ and $\overline{E}(I_{\{[u,\infty)\}})=0$ whenever $u\geq x_{max}$.
In the other cases, the above problem must satisfy:
\begin{equation}
 \label{eq:natelowmark2}
\begin{array}{l}
  \overline{E}(I_{\{[u,\infty)\}}) \leq \inf\limits_{\lambda_i\in \mathbb{R}} ~~\lambda_0\\
  s.t.\\
 \lambda_0+ \lambda_1 (u-m)  \geq 1,\\
 \lambda_0+ \lambda_1 (-m)  \geq 0,\\
 \end{array}
\end{equation}
where we have considered the worst cases for the inequalities. If $m<u$, the infimum is obtained when the inequalities in (\ref{eq:natelowmark2}) are equalities and  is equal to $\frac{m}{u}$. When $m\geq u$, the infimum is obtained for $\lambda_0=1$, $\lambda_1=0$.
We have therefore derived  Markov's inequality:
$$
P(x\geq u) \leq  \min\left(1,\dfrac{m}{u}\right)=\min\left(1,\dfrac{E[x]}{u}\right).
$$
The last equality follows from the fact that \eqref{eq:lowerpreMI} is equivalent to (see \eqref{eq:lowerpredual}):
  \begin{equation}
\label{eq:lowerpredualMI}
\begin{array}{l}
\sup\limits_{\mu \in \mathcal{M}^+} \int{I_{\{[u,\infty)\}} d\mu} ~~~~~~s.t.~~~\int{ d\mu}=1, ~~ \int{x d\mu}= m,
\end{array}
\end{equation} 
and so $m$  is just the expectation of $x$, i.e.,  $m=E[x]=\int{x d\mu}$.

\end{example}

 The second  subcase  allows us to formulate sure loss as  nonnegativity of a gamble  \citep[Alg. 2]{walley2004direct}. 
 
 \begin{proposition}
 Let us consider   $\domain=\posi(G \cup \gambles^+)$ and the following problem:
  \begin{equation}
\label{eq:feasibil0proof0}
\begin{array}{l}
\sup\limits_{0\leq \lambda_0\leq1,~\lambda_j\geq0} \lambda_0, ~~~~s.t.~~~ -\lambda_0- \sum\limits_{j=1}^{|G|}\lambda_jg_j\geq0.
\end{array}
\end{equation} 
 $\domain$ incurs a sure loss iff the above problem has solution $\lambda_0^*=1$ and avoids sure loss iff $\lambda^*_0=0$.
 \end{proposition}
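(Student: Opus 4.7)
The plan is to unpack what sure loss means for $\domain=\posi(G\cup\gambles^+)$, translate this condition into feasibility of the optimisation problem, and then invoke a scaling (homogeneity) argument to show that only the two extreme values $\lambda_0^{*}=0$ and $\lambda_0^{*}=1$ are possible.

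First, I would recall that by A.2, $\domain$ \emph{avoids sure loss} iff no $g\in\domain$ satisfies $\sup g<0$, equivalently iff there is no $\epsilon>0$ and no $g\in\domain$ with $g\leq-\epsilon$. Now, since any element of $\domain$ is of the form $\sum_{j=1}^{|G|}\lambda_j g_j + h$ for some $\lambda_j\geq0$ and $h\in\gambles^+$, and since adding a nonnegative $h$ only increases the gamble, one has that $\domain$ \emph{incurs sure loss} iff there exist $\lambda_j\geq0$ and $\epsilon>0$ such that $\sum_{j=1}^{|G|}\lambda_j g_j\leq -\epsilon$. Rewriting, this is precisely feasibility of \eqref{eq:feasibil0proof0} with some $\lambda_0=\epsilon>0$.

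Next, the key observation is homogeneity of the constraint: if $(\lambda_0,\lambda_1,\dots,\lambda_{|G|})$ is feasible with $\lambda_0>0$, then multiplying all components by $1/\lambda_0$ yields a feasible point with first component equal to $1$, because
\begin{equation*}
-1-\sum_{j=1}^{|G|}\tfrac{\lambda_j}{\lambda_0}g_j \;=\; \tfrac{1}{\lambda_0}\Bigl(-\lambda_0-\sum_{j=1}^{|G|}\lambda_j g_j\Bigr)\;\geq\;0,
\end{equation*}
and the upper bound $\lambda_0\leq 1$ in \eqref{eq:feasibil0proof0} is then tight. Hence the feasible values of $\lambda_0$ form either just $\{0\}$ (when no strictly positive $\lambda_0$ is feasible) or the whole interval $[0,1]$ (otherwise), so the supremum $\lambda_0^{*}$ lies in $\{0,1\}$.

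Combining the two steps: $\lambda_0^{*}=1$ holds iff some $\lambda_0>0$ is feasible, which by step one is exactly the sure loss condition; and $\lambda_0^{*}=0$ holds iff no $\lambda_0>0$ is feasible, i.e.\ iff $\domain$ avoids sure loss. I expect the only mildly subtle point to be the reduction from general elements of $\posi(G\cup\gambles^+)$ to pure conic combinations of the $g_j$'s (i.e.\ dropping the nonnegative slack $h$), but this is immediate from the fact that if $\sum\lambda_j g_j + h\leq -\epsilon$ with $h\geq 0$, then a fortiori $\sum\lambda_j g_j\leq -\epsilon$.
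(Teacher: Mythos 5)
Your proof is correct and follows essentially the same route as the paper's own (sketched) argument: reduce sure loss to the existence of a conic combination $\sum_j\lambda_j g_j$ bounded above by $-\epsilon<0$, then use positive homogeneity of the constraint to push $\lambda_0$ up to the cap of $1$. You in fact fill in two details the paper leaves implicit — the reduction from general elements of $\posi(G\cup\gambles^+)$ to pure combinations of the $g_j$, and the use of $\sup g<0$ (rather than mere pointwise negativity) to obtain a uniform $\epsilon$ — so your write-up is, if anything, more complete than the paper's sketch.
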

\begin{proof}
We briefly sketch the proof (see \citealt{walley2004direct}).
Assume that $\domain$ incurs a sure loss, then there exist $\lambda_j\geq 0$ for $j=1,\dots,|G|$ such that
 $\sum_{j=1}^{|G|}\lambda_jg_j<0$ and, thus,  $-\sum_{j=1}^{|G|}\lambda_jg_j>0$.
 Then we can increase $\lambda_0$ as much as we want.
  Similarly, we can prove the inverse implication. 
\end{proof}

 \begin{example}[Markov's inequality cont.]
  Consider again the previous example, i.e., 
  $$
  G=\{x-m,-x+m\}.
  $$
We want to show when/if $\domain$ incurs a sure loss. Consider  \eqref{eq:feasibil0proof0} and assume   $m\notin \Omega$. 
    \begin{equation}
\label{eq:feasibil0proof0MI}
\begin{array}{l}
\sup\limits_{0\leq \lambda_0\leq1,~\lambda_1 \in \reals} \lambda_0, ~~~~s.t.~~~ -\lambda_0- \lambda_1(x-m)\geq0 ~~x\in \Omega.
\end{array}
\end{equation} 
Note that if $m< 0$ then $x-m>0$ in $\Omega$ and so $- \lambda_1(x-m)\geq0 $ provided that $\lambda_1\leq0$. Hence, the solution of the above optimisation
problem is $\lambda_0=1$ (sure loss).
Similarly, if $m>x_{max}$ then  $- \lambda_1(x-m)\geq0 $  provided that $\lambda_1\geq0$ and so the optimum is  $\lambda_0=1$.
Assume now that $m \in \Omega$, then the best solution is obtained for $\lambda_1=0$ and so the optimum is $\lambda_0=0$ and, therefore,
$\domain$ avoids sure loss.
We can conclude that, to avoid a sure loss, Alice should  accept the gambles $x-m,-x+m$ only when
$$
\inf_{x\in \Omega} x=0 ~~\leq ~~ m ~~\leq~~\sup_{x\in \Omega} x=x_{max}.
$$
In the previous example we saw that  $m=E[x]=\int{x d\mu}$, hence the above inequalities mean  that 
$$
\inf_{x\in \Omega} x ~~\leq ~~ E[x]~~\leq~~\sup_{x\in \Omega} x.
$$
\end{example}

 \subsection{Complexity of inferences}
 When $\Omega$ is finite (e.g., coin toss), then a gamble $g$ can also be seen as a vector in $\reals^{|{\Omega}|}$ (where $|{\Omega}|=2$ for the coin).  Then \eqref{eq:feasibil0} can be expressed as a linear programming problem, thus its  complexity is polynomial: \textit{Alice can determine the implication of her assessments of desirability in polynomial time}. In case $\Omega=\reals^n$, $f:\reals^n \rightarrow \reals$, solving  \eqref{eq:feasibil0}  means to check the existence
of {real parameters $\lambda_j\geq0$ ($j=1,\dots,|G|$)} such that the function   
\begin{equation}
\label{eq:feasibil}
F:=f- \sum_{j=1}^{|G|}\lambda_jg_j
\end{equation}
is nonnegative in $\reals^n$.
In order to study the problem from a computational viewpoint, and avoid undecidability results, it is
clear that  we must impose further restrictions on the class of functions $F$. 
At the same time we would like to keep the problem general enough, in order not to lose expressiveness of the model. 
A good compromise can be achieved by considering the case of multivariate polynomials. The decidability of $F\geq0$ 
for multivariate polynomials can be proven by means of the Tarski-Seidenberg quantifier elimination theory \citep{tarski1951decision,seidenberg1954new}.

Let $d \in \N$. By $\reals_{2d}[x_1]$ we denote the set of all polynomials up to degree $2d$  in the indeterminate variable $x_1 \in \reals$ with real-valued coefficients.
With the usual definitions of addition and scalar multiplication, $\reals_{2d}[x_1]$ becomes a vector space over the field $\reals$ of real numbers. 
We can introduce a basis for $\reals_{2d}[x_1]$ that we denote as $v_{2d}(x_1)$ where
$v_{j}(x_1)=[1,x_1,x_1^2,\dots,x_1^{j}]^\top$.
We denote the dimension of $v_{j}(x_1)$ as $s_1(j)$ for $j=0,1,2,\dots$, e.g., $s_1(2d)=2d{+1}$. Any polynomial in $\reals_{2d}[x_1]$ can then be written as $p(x_1)=b^\top\,v_{2d}(x_1)$ being $b\in \reals^{s_1(2d)}$ the vector of coefficients. One can actually provide a square matrix representation of a polynomial, as for any polynomial in $\reals_{2d}[x_1]$ there is a (non-unique) matrix $Q \in \reals_s^{s_1(d)\times s_1(d)}$ such that $p(x_1)=v^\top_{d}(x_1)Q v_{d}(x_1)$, where $\reals_s^{s_1(d)\times s_1(d)}$ is the space of $s_1(d)\times s_1(d)$ real-symmetric matrices.
In virtue of these observations
we may therefore also be interested in some subsets of  $\reals_{2d}[x_1]$ that are:\\
(1) the subset of nonnegative polynomials, denoted as $\reals^+_{2d}[x_1]$; \\(2) the subset of  polynomials  
\begin{equation} \label{eqn:SOSrepr}
 \Sigma_{2d}[x_1]=\left\{p(x_1) \in \reals_{2d}[x_1] ~\Big|~ p(x_1)=v^\top_{d}(x_1)Q v_{d}(x_1) \text{ with } Q \in \reals_s^{s_1(d)\times s_1(d)}, ~Q\geq0\right\}.
\end{equation}
The polynomials in $\Sigma_{2d}[x_1]$ are also called SOS polynomials. This is because any polynomial in $\reals_{2d}[x_1]$ that is a sum of squares of polynomials belongs to $\Sigma_{2d}[x_1]$ and vice versa \citep[Prop. 2.1]{lasserre2009moments}. 
Clearly any polynomial in $\Sigma_{2d}[x_1]$ is necessarily nonnegative. It is therefore natural to ask whether the two sets, $\reals^+_{2d}[x_1]$ and $\Sigma_{2d}[x_1]$, coincide, and therefore whether Equation (\ref{eqn:SOSrepr}) provides a representation theorem for nonnegative univariate polynomial. The answer to this question is affirmative \citep[Prop. 2.3]{lasserre2009moments}.

The previous framework can be extended to multivariate polynomials $\reals_{2d}[x_1,\dots,x_n]$. Indeed, polynomials
in $\reals_{2d}[x_1,\dots,x_n]$ can be written as $p(x_1,\dots,x_n)=b^\top\,\,v_{2d}(x_1,\dots,x_n)$ with
\begin{align} 
v_{j}(x_1,\dots,x_n)&=[1,x_1,\dots,x_n,x_1^2,x_1x_2,\dots,x_{n-1}x_{n},x_n^2,\dots,x_1^{2d},\dots,x_n^{j}]^\top, 
\end{align}
 $b \in \reals^{s_n(2d)}$ with $s_n(j) = {n+j \choose j}$ for $j=0,1,2,\dots$. Moreover, one can always find a real-symmetric matrix $Q$ such that $p(x_1,\dots,x_n)= v^\top_{d}(x_1,\dots,x_n)Q v_{d}(x_1,\dots,x_n)$.
 Similarly to the univariate case,  we can thence define  the nonnegative  polynomials $\reals^+_{2d}[x_1,\dots,x_n]$  and the SOS polynomials $ \Sigma_{2d}[x_1,\dots,x_n]$.
 In the multivariate case, however positive semi-definite real-symmetric matrices do not necessarily characterise being nonnegative. i.e., it is in general not true that every nonnegative polynomial is SOS. 
 For instance $g(x_1,x_2)=x_1^2x_2^2(x_1^2+x_2^2-1)+1$  is a nonnegative polynomial that does not have a SOS representation \citep[Sec.2.4]{lasserre2009moments}.  \citet{hilbert1888darstellung} showed the following.
 
 \begin{proposition}
\label{prop:nonnRsos}
 $\reals^+_{2d}[x_1,\dots,x_n]=\Sigma_{2d}[x_1,\dots,x_n]$ holds iff
either $n=1$ or $d=1$ or $(n,d)=(2,2)$. 
\end{proposition}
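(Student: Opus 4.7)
The plan is to establish the biconditional by treating the two directions separately. For the forward direction, I would handle the three cases $n=1$, $d=1$, and $(n,d)=(2,2)$ independently, and for the reverse direction I would exhibit explicit nonnegative polynomials that fail to be SOS in all remaining cases.

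For the univariate case $n=1$, I would appeal to the fundamental theorem of algebra over $\reals$: any $p \in \reals^+_{2d}[x_1]$ factors as a product of real linear factors and irreducible real quadratics of the form $(x_1-b)^2+c^2$ with $c\neq 0$; the real roots must have even multiplicity because $p\geq 0$ cannot change sign. Thus $p$ is a constant multiple (positive, again by nonnegativity) of a product of squares and quadratics of the form $u^2+v^2$. The key algebraic identity $(a^2+b^2)(c^2+d^2) = (ac+bd)^2+(ad-bc)^2$ then shows that products of SOS polynomials are SOS, yielding $p\in\Sigma_{2d}[x_1]$. For the case $d=1$, every $p \in \reals^+_2[x_1,\dots,x_n]$ has the form $p(x) = v_1^\top(x)\,Q\,v_1(x)$ for some real symmetric matrix $Q$; nonnegativity of $p$ together with the quadratic structure forces $Q$ (after appropriate choice of representation) to be positive semidefinite, whence any factorization $Q = L^\top L$ yields an SOS representation.

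The third case $(n,d)=(2,2)$ is the deep part of Hilbert's theorem. I would homogenize a bivariate quartic to a ternary quartic form $P(x_1,x_2,x_3)$, reducing the problem to showing that every nonnegative ternary quartic is a sum of three squares of quadratic forms. Hilbert's original argument exploits the geometry of the real cubic curve cut out by a related Hessian-type construction, combined with a careful dimension count on the cone of SOS forms and on the boundary of the cone of nonnegative forms, showing these coincide in this specific $(n,d)$. I expect this step to be the main obstacle, both because it is genuinely delicate and because it resists straightforward linear-algebraic arguments that succeed in the other two cases.

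For the reverse direction, I would give counterexamples covering the two regimes $(n=2,d\geq 3)$ and $(n\geq 3, d\geq 2)$. The Motzkin polynomial
\begin{equation*}
M(x_1,x_2) = 1 + x_1^4 x_2^2 + x_1^2 x_2^4 - 3 x_1^2 x_2^2,
\end{equation*}
which is nonnegative by AM--GM but has no SOS decomposition (a short argument comparing the monomials $x_1^2 x_2^2$ that can appear as squares of bidegrees present in $M$ against the forced coefficient $-3$), handles $(n,d)=(2,3)$; adjoining further squared variables $x_3^{2d}+\dots+x_n^{2d}$ and multiplying by $x_i^{2k}$ to raise degree allows one to propagate the counterexample to $(n\geq 2, d\geq 3)$. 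For $(n,d)=(3,2)$ I would use the Choi--Lam form or the homogenization of Motzkin, again extending to larger $(n,d)$ by adjoining squares of additional variables. Together with the forward direction, this closes the biconditional.
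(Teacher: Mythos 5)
The paper does not prove this proposition at all: it is stated as Hilbert's 1888 theorem and simply cited, so there is no in-paper argument to compare yours against. Your decomposition into three ``equality'' cases plus counterexamples for everything else is the standard route, and most of it is sound. The $n=1$ factorization argument (even multiplicity at real roots, the two-square identity for products) is correct. The $d=1$ case is correct, but you should make explicit why nonnegativity forces the (here unique) Gram matrix $Q$ to be positive semidefinite: for $z=(z_0,z')$ with $z_0\neq0$ one has $z^\top Qz=z_0^2\,p(z'/z_0)\geq0$, and the $z_0=0$ directions follow by a limiting argument on the leading form. The counterexample part is also essentially right: the Newton-polytope argument for the Motzkin polynomial, and the persistence of non-SOS-ness under adding variables and raising $d$, do cover all of $\{(n,d):n\geq2,\ d\geq2\}\setminus\{(2,2)\}$. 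One slip there: the homogenization of the Motzkin polynomial is a ternary \emph{sextic}, so it cannot serve as the $(n,d)=(3,2)$ counterexample; only your Choi--Lam alternative (dehomogenized, $1+x_1^2x_2^2+x_2^2x_3^2+x_3^2x_1^2-4x_1x_2x_3$) works for that case.

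The genuine gap is $(n,d)=(2,2)$, i.e.\ that every nonnegative ternary quartic form is a sum of (three) squares of quadratics. What you offer --- ``the geometry of the real cubic curve cut out by a Hessian-type construction, combined with a careful dimension count on the cone of SOS forms and on the boundary of the cone of nonnegative forms, showing these coincide'' --- is not a proof and does not accurately describe Hilbert's argument. A dimension comparison cannot decide the question, since both cones are full-dimensional for every $(n,d)$ (the Gram map sends the interior of the PSD cone onto an open subset of $\Sigma_{2d}$); Hilbert's actual proof is a deformation/connectedness argument on the set of positive definite quartics, using that the map from triples of quadratic forms to quartics is a local submersion at suitable points (this is where the count of constants $3\cdot6-3=15=\dim\reals_4[x_1,x_2]$ enters) together with nontrivial input about the complex quartic curve $f=0$. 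As written, this case is a citation-shaped hole: either cite Hilbert for it, as the paper does for the whole proposition, or follow a complete modern treatment such as Pfister--Scheiderer.
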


%

The problem of testing global nonnegativity of a polynomial function 
is in general \textit{NP-hard}. If Alice wants to avoid the  complexity  associated with this problem, an alternative option is to consider a subset of polynomials for which a nonnegativity test is not  \textit{NP-hard}. The problem of testing if a given \textit{polynomial} is SOS has polynomial complexity {(we only need to check if the matrix of coefficients $Q$ in \eqref{eqn:SOSrepr} is positive semi-definite), see \cite{lasserre2009moments}.} 
\begin{example}
 Let us consider the  polynomial $ f=\tfrac{1}{4}-x(1-x)$, we want to show that $f$ is SOS.
Let us attempt to rewrite it as 
$$
f=\tfrac{1}{4}-x(1-x)=\sigma_0(x),
$$
for $\sigma_0(x)\in\Sigma_{2d}$ and, therefore, $\sigma_0(x) =[1,x]Q[1,x]^T$ with $Q$ being a $2 \times 2$ positive semi-definite matrix. 
 By equating the coefficients of the polynomials in $\frac{1}{4}-x(1-x)=[1,x]Q[1,x]^T$, we  find the matrix 
 $$
 Q=\begin{bmatrix}
     \tfrac{1}{4} & -\tfrac{1}{2}\\
-\tfrac{1}{2} & 1
     \end{bmatrix}.
 $$
 The matrix is positive semi-definite and, thus, the polynomial $f$ is SOS. 
\end{example}

\section{Bounded rationality}
In the bounded rationality theory we are going to present we will work with $\pspace=\reals^n$ and make two important assumptions. We  assume that $\gambles$ is the set of multivariate polynomials of $n$ variables and  of degree less than or equal to $2d$, with $d\in\N$. We  denote $\gambles$ as $\gambles_{2d}$ and the nonnegative polynomials as $\gambles^+_{2d}$. Note that $\gambles_{2d}$ is a vector space and A.1--A.5 are  well-defined in $\gambles_{2d}$. This restriction is useful to define the computational complexity of our bounded rationality theory as a function of $n$ and $d$.
We now define our bounded rationality criteria, and point out the two assumptions.

\begin{definition}
\label{def:badg}
We say that  $\bdomain \subset \gambles_{2d}$ is a {\bf bounded-rationality}  coherent set of almost desirable gambles (BADG)
when it satisfies A.3--A.5 (i.e. it is a closed convex cone) and:
\begin{description}
 \item[bA.1] If $ g \in \Sigma_{2d} $ then $ g\in \bdomain$ (Bounded Accepting Partial Gain),
 \item[bA.2] If $ g \in \Sigma^-_{2d}$ then $ g\notin \bdomain$ (Bounded Avoiding Sure Loss);
  \end{description}
where  $\Sigma_{2d}\subset \gambles^+_{2d}$ is the set of SOS {of degree less than or equal to $2d$} and  $\Sigma^-_{2d}:=\{  g \in \gambles_{2d} \mid \sup g < 0, -g \in \Sigma_{2d} \}$ is the set of negative SOS polynomials of degree less than or equal to $2d$ (or stated otherwise, it is the interior of $-\Sigma_{2d}$).
\end{definition}
We have seen that A.1 implies that a coherent set of gambles must include all nonnegative gambles (and, therefore, $\gambles^+_{2d}$ that is the set of all nonnegative polynomials) and that, additionally, A.2 means that it should not include negative gambles. Here, we restrict A.1 and A.2 imposing bounded-rationality that implies that the set must only include SOS polynomials and avoid negative SOS polynomials, 
up to degree $2d$. In  BADG theory, we ask Alice only to always accept gambles  for which she can efficiently determine the nonnegativity (SOS polynomials) and to never accept gambles for which she can efficiently determine the negativity (negative SOS polynomials). 
Using the terminology from 
\cite[Definition 1]{de2012exchangeability}\footnote{Notice that the authors use a different notion of coherence: they do not assume the closure condition (A.5), and they would require that both the zero gamble and gambles in $-\Sigma_{2d}$ are not desirable.},
the set $\bdomain$ is said to be coherent \emph{relative} to 
the pair constituted  by the  vector subspace of quadratic forms $v_{2d}(x_1,\dots,x_n)^TQv_{2d}(x_1,\dots,x_n)$ defined by the symmetric real matrices $Q$ and 
the closed\footnote{Closedness of the convex cone of SOS was proved by \cite{robinson1969some}.} convex cone of 
SOS polynomials (or equivalently the closed convex cone of polynomials defined by a positive semi-definite real-symmetric matrix).

In the multivariate case, we have seen that there are nonnegative polynomials that do not have a SOS representation. These polynomials should be in principle desirable for Alice in the ADG framework, but in BADG we do not enforce Alice to accept them.  A similar reasoning holds for the difference between A.2 and bA.2: in the chosen framework, we cannot say that Alice is irrational if she chooses  a negative gamble for which she cannot verify computationally the negativity. 
Despite the fact that in principle they should never be accepted  by Alice in the ADG framework, in a BADG we thus do not enforce this property and we only ask Alice to avoid gambles in $\Sigma^-_{2d}$.  For these reasons, BADG is a theory of bounded rationality. 


Alice may not be able to prove that  her set of desirable gambles satisfies A.2.
However, by exploiting the fact that
$$
\Sigma_{2d} \subseteq \gambles_{2d}^+ \subset \gambles^+,
$$
a BADG set $\bdomain$ that satisfies A.2 but not A.1 can (theoretically) be turned to:
\begin{enumerate}
 \item an ADG in $\gambles_{2d}$ by considering its extension $\posi(\bdomain \cup \gambles_{2d}^+)$, and also to
 \item an ADG in $\gambles$ by considering its extension  $\posi(\bdomain \cup \gambles^+)$.
\end{enumerate}

The other way round is also true. Namely:
\begin{proposition}\label{prop:fromADG2BADG}
Let $G \subseteq \gambles_{2d}$ be a  finite set of assessments, and 
assume $\domain=\posi(G \cup \gambles_{2d}^+)$ (resp.  $\domain=\posi(G \cup \gambles^+)$) is ADG. 
Then $\bdomain= \posi(G \cup \Sigma_{2d}^+ )$ is BADG.
\end{proposition}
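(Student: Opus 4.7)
The plan is to verify each of the five conditions in Definition \ref{def:badg} for $\bdomain=\posi(G\cup\Sigma_{2d})$. The starting observation is that, because $\Sigma_{2d}$ is itself closed under non-negative linear combinations, every element of $\posi(G\cup\Sigma_{2d})$ can be rewritten as a Minkowski decomposition
\[
 \bdomain \;=\; \posi(G)+\Sigma_{2d},
\]
i.e.\ every $g\in\bdomain$ admits $g=h+\sigma$ with $h\in\posi(G)$ and $\sigma\in\Sigma_{2d}$. Axioms A.3 and A.4 are immediate because any $\posi(\cdot)$ is by construction a convex cone, and bA.1 is trivial since $\Sigma_{2d}\subseteq G\cup\Sigma_{2d}\subseteq\bdomain$. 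For A.5 (closure) I would argue that $\posi(G)$ is polyhedral (hence closed) in the finite-dimensional space $\gambles_{2d}$, while $\Sigma_{2d}$ is closed by the result of \citet{robinson1969some} invoked earlier in the paper; the Minkowski sum of a polyhedral cone and a closed convex cone in finite dimension is closed by standard convex-analysis results.

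The real work is in bA.2, which I would establish by contradiction. Assume some $g\in\Sigma_{2d}^-$ lies in $\bdomain$. Using the decomposition above, pick $h\in\posi(G)$ and $\sigma\in\Sigma_{2d}$ with $g=h+\sigma$. Because every SOS polynomial is pointwise non-negative, $\sigma(x)\geq 0$ for all $x$, and hence
\[
 h(x)\;=\;g(x)-\sigma(x)\;\leq\;g(x),\qquad \text{so}\qquad \sup h\;\leq\;\sup g\;<\;0.
\]
But $h\in\posi(G)\subseteq\posi(G\cup\gambles_{2d}^+)=\domain$ (respectively $\posi(G\cup\gambles^+)=\domain$ in the unbounded case). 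Thus $\domain$ would contain a gamble with strictly negative supremum, contradicting A.2 for the ADG $\domain$. This contradiction forces $\Sigma_{2d}^-\cap\bdomain=\emptyset$, establishing bA.2.

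The main obstacle I anticipate is the closure step: sums of closed convex cones need not be closed in general, even in finite dimension, so one has to lean on the polyhedrality of $\posi(G)$ and the finite-dimensional ambient space $\gambles_{2d}$ to conclude. The substantive idea, by contrast, is the simple one used for bA.2: the pointwise non-negativity of SOS polynomials lets us subtract $\sigma$ from a supposed negative-SOS element of $\bdomain$ and land on a negative element of $\domain$, transferring the violation from the bounded-rationality side to the full ADG side where A.2 forbids it.
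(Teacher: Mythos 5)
Your treatment of bA.1, A.3, A.4 and bA.2 is correct, and your bA.2 step is in substance the same as the paper's: the paper simply observes that $\bdomain=\posi(G\cup\Sigma_{2d})\subseteq\posi(G\cup\gambles_{2d}^+)=\domain$ and that an ADG already satisfies bA.1 and bA.2, so any closed convex cone squeezed between $\Sigma_{2d}$ and $\domain$ is a BADG. Your detour through the decomposition $g=h+\sigma$ proves the same thing slightly less directly (indeed $g$ itself already lies in $\bdomain\subseteq\domain$ and has $\sup g<0$, which contradicts A.2 without extracting $h$), but it is sound.

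The genuine problem is the closure step. The ``standard convex-analysis result'' you invoke --- that the Minkowski sum of a polyhedral cone and a closed convex cone in finite dimension is closed --- is false. Counterexample in $\reals^3$: let $K=\left\{(x,y,z):z\geq\sqrt{x^2+y^2}\right\}$ and let $P$ be the ray generated by $(1,0,-1)$; then $(0,1,\sqrt{n^2+1}-n)\in K+P$ for every $n$, so $(0,1,0)$ lies in the closure of $K+P$, yet it is not in $K+P$. The correct sufficient condition (Rockafellar's recession-cone criterion) would here require $\posi(G)\cap(-\Sigma_{2d})=\{0\}$, since $\Sigma_{2d}$ is pointed; but the hypothesis that $\domain$ avoids sure loss only yields $\sup h=0$ for $h\in\posi(G)\cap(-\Sigma_{2d})$, which does not force $h=0$ (think of $h=-x_1^2$). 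So your justification of A.5 does not go through as stated. To be transparent: the paper's own proof is no better on this point --- it asserts that $\posi(G\cup\Sigma_{2d})$ ``is a closed convex cone'' without argument --- so you have correctly isolated the one step that neither you nor the authors actually establish; just do not cover it with a general principle that is not true.
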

\begin{proof}
We just verify the claim with $\gambles_{2d}^+$, the other, mutatis mutandis, being verified the same way.
Since by assumption $\domain$ is ADG, $\domain$ is a closed convex cone containing $\gambles^+$ and disjoint from the set of negative gambles. In particular $\domain$ is BADG. Therefore 
$\bdomain= \posi(G \cup \Sigma_{2d}^+ ) \subseteq \domain$ 
and it is a closed convex cone, meaning it is BADG too.
\end{proof}
These remarks are important because, as it will be shown in the next sections, they
 will allow us to use BADG as  a computable approximation of ADG.

In BADG  theory, Proposition \ref{th:noneg} is reformulated as follows.
\begin{theorem} 
\label{th:Bnoneg} Given a finite set $G \subset \gambles_{2d}$ of desirable gambles, the set $\posi(G \cup \Sigma_{2d})$  includes the gamble $f$ if and only if  there exist $\lambda_j\geq 0$ for $j=1,\dots,|G|$ such that
\begin{equation}
\label{eq:feasibil1}
f- \sum_{j=1}^{|G|}\lambda_jg_j \in \Sigma_{2d}.
\end{equation} 
\end{theorem}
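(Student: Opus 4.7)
The plan is to argue directly from the definition of $\posi$ in \eqref{eq:posi}, using the fact that $\Sigma_{2d}$ is itself a convex cone (closed under nonnegative linear combinations, as recalled in the paper via Robinson's closedness result and the obvious sum/scaling properties of SOS polynomials). In fact the statement is essentially the BADG analogue of Proposition~\ref{th:noneg}, obtained by replacing the role of $\gambles^+$ (which is a convex cone) by the convex cone $\Sigma_{2d}$, so the argument is structurally identical.

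For the ``if'' direction, I would suppose that there exist $\lambda_j\geq 0$ with $\sigma:=f-\sum_{j=1}^{|G|}\lambda_j g_j \in \Sigma_{2d}$. Then I can write
\begin{equation*}
f \;=\; \sum_{j=1}^{|G|}\lambda_j g_j \;+\; 1\cdot \sigma,
\end{equation*}
which is a nonnegative combination of elements of $G\cup \Sigma_{2d}$, hence $f\in \posi(G\cup \Sigma_{2d})$.

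For the ``only if'' direction, suppose $f\in \posi(G\cup \Sigma_{2d})$. By \eqref{eq:posi}, $f$ admits a decomposition
\begin{equation*}
f \;=\; \sum_{j=1}^{|G|}\lambda_j g_j \;+\; \sum_{k=1}^{m}\mu_k \sigma_k,
\end{equation*}
with $\lambda_j,\mu_k\geq 0$, $g_j\in G$ and $\sigma_k\in \Sigma_{2d}$ (any repeated or zero terms can be collected or dropped). Because $\Sigma_{2d}$ is a convex cone, $\sum_k \mu_k \sigma_k \in \Sigma_{2d}$, which is exactly the condition $f-\sum_j \lambda_j g_j \in \Sigma_{2d}$ required by \eqref{eq:feasibil1}.

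The only non-cosmetic point is the invocation of the convex-cone property of $\Sigma_{2d}$; this is immediate from the SOS representation \eqref{eqn:SOSrepr}, since the sum of two positive semi-definite Gram matrices (padded to a common basis $v_d$) is positive semi-definite, and scaling by $\mu_k\geq 0$ preserves positive semi-definiteness. I do not foresee any genuine obstacle: the lemma is a bookkeeping reformulation of ``$f$ lies in the conic hull of $G\cup \Sigma_{2d}$'' as ``$f$ minus a $G$-combination lies in $\Sigma_{2d}$'', which is what the BADG counterpart of Proposition~\ref{th:noneg} needs in order to be turned into a concrete semidefinite feasibility test in the sequel.
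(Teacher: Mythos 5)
Your proof is correct and follows essentially the same route as the paper's: decompose $f$ as a conic combination of elements of $G\cup\Sigma_{2d}$ and absorb the $\Sigma_{2d}$-part using the fact that $\Sigma_{2d}$ is a convex cone (a point the paper leaves implicit and you make explicit via the Gram-matrix argument). No issues.
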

\begin{proof}
Assume that $ f\in \posi(G\cup \Sigma_{2d})$ then there exist $\sigma_i\in  \Sigma_{2d}$ and $\gamma_i \geq 0$ for $i=1,\dots,m$ such that
 $ f=\sum_{j=1}^{|G|}\lambda_jg_j +\sum_{j=1}^{m}\gamma_i\sigma_i$.   This implies that $  f-\sum_{j=1}^{|G|}\lambda_jg_j$ is SOS,   proving one implication. The other implication can be proven similarly.  \end{proof}
 
 If we compare  Proposition \ref{th:noneg} and Theorem \ref{th:Bnoneg}, then we see  the  difference between ADG and BADG:
 $$
 \text{ADG: }~~~ f- \sum_{j=1}^{|G|}\lambda_jg_j \in \gambles^+,~~~~~~~~~ \text{BADG: }~~~ f- \sum_{j=1}^{|G|}\lambda_jg_j \in \Sigma_{2d},
 $$
that consists in the definition of nonnegativity or, equivalently, nonnegative gambles, i.e., the gambles that Alice shall always accept.
%
  
  Also for BADG we can consider the gamble $f=h-\lambda_0$ for some $\lambda_0 \in \mathbb{R}$ and define the concept of lower prevision.
  \begin{definition} Let  $G \subset \gambles_{2d}$ be a finite set, and let $\bdomain=\posi(G \cup \Sigma_{2d})$.
  Assume that $\bdomain$ is BADG, then the solution of the following problem
  \begin{equation}
\label{eq:lowerprebounded}
\begin{array}{l}
\sup\limits_{\lambda_0 {\in \mathbb{R},}\lambda_j\geq0} \lambda_0, ~~~~~~s.t.~~~~~h-\lambda_0- \sum\limits_{j=1}^{|G|}\lambda_jg_j \in  \Sigma_{2d},
\end{array}
\end{equation} 
is called the lower prevision of $h$ and denoted as $\underline{E}^*[h]$.
 \end{definition}
We can prove that $\bdomain=\posi(G \cup \Sigma_{2d})$  satisfies   \textit{bounded} avoiding sure loss exploiting the following result.

 \begin{proposition}
 Let us consider   $\bdomain=\posi(G \cup \Sigma_{2d})$ and the following problem:
  \begin{equation}
\label{eq:avs_badg}
\begin{array}{l}
\sup\limits_{0\leq \lambda_0\leq1,~\lambda_j\geq0} \lambda_0, ~~~~s.t.~~~ -\lambda_0- \sum\limits_{j=1}^{|G|}\lambda_jg_j \in \Sigma_{2d}.
\end{array}
\end{equation} 
 $\bdomain$ does not satisfy b.A.2, and thus incurs in a sure loss  iff the above problem has solution $\lambda_0^*=1$, and it avoids bounded sure loss (b.A.2 is satisfied) iff $\lambda^*_0=0$.
 \end{proposition}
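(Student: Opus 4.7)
The plan is to follow the structure of the analogous ADG proposition, exploiting that the feasible set of \eqref{eq:avs_badg} \emph{without} the upper bound $\lambda_0\leq 1$ is a closed convex cone in $\reals\times\reals^{|G|}$ (because $\Sigma_{2d}$ is a convex cone and positive scaling of SOS polynomials is SOS). Thus, if any feasible tuple has $\lambda_0>0$, rescaling by $t=1/\lambda_0$ yields a feasible tuple with $\lambda_0=1$, forcing $\lambda_0^*=1$; otherwise the only feasible $\lambda_0$ is $0$, giving $\lambda_0^*=0$. The proposition therefore reduces to the single equivalence: ``there exists a feasible point with $\lambda_0>0$'' iff ``$\bdomain$ violates b.A.2''.

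For the easy direction, I would suppose that $\lambda_j\geq 0$ and $\lambda_0>0$ satisfy the constraint, and set $h:=\sum_j\lambda_j g_j$. Then $h\in\posi(G)\subseteq\bdomain$, and $-h=\lambda_0+\sigma$ where $\sigma:=-\lambda_0-h\in\Sigma_{2d}$. Since positive constants are SOS and $\Sigma_{2d}$ is closed under addition, $-h\in\Sigma_{2d}$; moreover $\sup h\leq -\lambda_0<0$. Hence $h\in\bdomain\cap\Sigma^-_{2d}$, so b.A.2 fails.

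Conversely, given $g\in\bdomain\cap\Sigma^-_{2d}$, the decomposition $g=\sum_j\mu_j g_j+\sigma'$ with $\mu_j\geq 0$ and $\sigma'\in\Sigma_{2d}$ is guaranteed by $\bdomain=\posi(G\cup\Sigma_{2d})$. The crucial step is to invoke the characterization of $\Sigma^-_{2d}$ as the interior of $-\Sigma_{2d}$ (recorded in Definition~\ref{def:badg}), which supplies $\epsilon>0$ such that $-g-\epsilon\in\Sigma_{2d}$. Summing two SOS polynomials,
\[
-\epsilon-\sum_j\mu_j g_j=(-g-\epsilon)+\sigma'\in\Sigma_{2d},
\]
so $(\lambda_0,\mu_j)=(\epsilon,\mu_j)$ is feasible with $\lambda_0>0$, closing the equivalence and yielding $\lambda_0^*=1$.

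The principal obstacle is precisely this last extraction: passing from ``$-g\in\Sigma_{2d}$ and $\sup g<0$'' to ``$-g-\epsilon\in\Sigma_{2d}$ for some $\epsilon>0$''. This is not an elementary pointwise estimate: a nonnegative SOS polynomial bounded below by a positive constant need not be obtainable as a positive constant plus another SOS polynomial. The step rests on the topological fact that $\Sigma_{2d}$ has nonempty interior in the finite-dimensional space $\reals_{2d}[x_1,\dots,x_n]$ and that a strictly negative SOS gamble sits in the interior of $-\Sigma_{2d}$, which is exactly the alternative characterization of $\Sigma^-_{2d}$ the authors record. Once that characterization is granted, the scaling argument carries over verbatim from the ADG case.
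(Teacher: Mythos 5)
Your argument is essentially the paper's own proof: the published version consists of exactly this scaling/conic observation (forcing $\lambda_0^*\in\{0,1\}$) together with the two-way translation between a feasible point with $\lambda_0>0$ and a gamble in $\bdomain\cap\Sigma^-_{2d}$, the key step being the same decomposition $-f=\sigma+\lambda_0$ with $\sigma\in\Sigma_{2d}$. If anything you are more careful than the published sketch, which justifies that decomposition only by writing ``given $-f>0$'' and silently replaces a witness in $\bdomain\cap\Sigma^-_{2d}$ by one in $\posi(G)\cap\Sigma^-_{2d}$; your explicit absorption of the SOS summand $\sigma'$ and your appeal to the characterisation of $\Sigma^-_{2d}$ as the interior of $-\Sigma_{2d}$ (recorded in Definition~\ref{def:badg}) close precisely those two small gaps.
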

\begin{proof}
Assume that $\bdomain$ b.A.2 is false, and thus incurs in a sure loss. This means there exists $\lambda_j\geq 0$ for $j=1,\dots,|G|$ such that
 $f:=\sum_{j=1}^{|G|}\lambda_jg_j \in \Sigma^{-}_{2d}$. Hence,  $-f$ is SOS (belongs to $ \Sigma_{2d}$) and strictly positive, yielding that we can increase $\lambda_0$ as much as we want, because we can find $\sigma\in \Sigma_{2d}$ such that $- f=\sigma +\lambda_0$ is true
(given $- f > 0$). In practice, we are exploiting the fact that for any positive scaling constant $\rho$ the following equality still holds $- f \rho=\rho\sigma +\rho\lambda_0$  and so $\lambda_0=1$.
Now assume that  there is no $\lambda_j\geq 0$ for $j=1,\dots,|G|$ such that
 $\sum_{j=1}^{|G|}\lambda_jg_j \in \Sigma^{-}_{2d}$. The only way for $-\lambda_0- \sum\limits_{j=1}^{|G|}\lambda_jg_j$ being SOS is that
 $\lambda_0=0$.
\end{proof}

\subsection{Duality for BADG}\label{subsec:badg}
We can also define the dual of a BADG. In this case,  the gambles $g$ are polynomials,  the nonnegative gambles that Alice accepts are SOS, and the negative gambles that she does not accept are the negative polynomials $g$ such that $-g$ is SOS.
Since we are dealing with a vector space, we can consider its dual space $\gambles_{2d}^\bullet$ of all linear maps $L: \gambles_{2d} \rightarrow \reals$ (linear functionals) and thus define  the dual of $\bdomain\subset \gambles_{2d}$ as the set $\left\{L \in \gambles_{2d}^\bullet:  L(g) \geq0, ~\forall g \in \bdomain\right\}$.
Since {$\gambles_{2d}$} has a basis, i.e., the monomials, if we introduce the scalars
\begin{equation}
\label{eq:linearoperator}
y_{\alpha_1\alpha_2\dots \alpha_n}:={L(x_1^{\alpha_1}x_2^{\alpha_2}\cdots x_n^{\alpha_n})} \in \reals,
\end{equation} 
where $\alpha_i \in \mathbb{N}$,  we can rewrite 
$L(g)$ for any polynomial $g$ as a function of the vector of variables $y\in \reals^{s_n(2d)}$, whose components are the real variables $y_{\alpha_1\alpha_2\dots \alpha_n}$ defined above. This means that the dual space {$\gambles_{2d}^\bullet$} is isomorphic to
$\reals^{s_n(2d)}$, and we can thence define the dual maps $(\cdot)^\bullet$ between $\gambles_{2d}$ and $\reals^{s_n(2d)}$ as follows.
\begin{definition}\label{def:dual0}
Let $\bdomain$ be a subset of $\gambles_{2d}$. Its dual is defined as 
\begin{equation}
\label{eq:dualM0}
\bdomain^\bullet=\left\{{y} \in \reals^{{s_n(2d)}}:  L(g)\geq0, ~\forall g \in \bdomain\right\},
\end{equation} 
where $L(g)$ is completely determined by ${y}$ via the definition \eqref{eq:linearoperator}.
Similarly, given a subset $\mathcal{R}$ of $\reals^{s_n(2d)}$, its dual is defined as
\begin{equation}
\label{eq:dualY0}
\mathcal{R}^\bullet=\left\{{g} \in \gambles_{2d}:  L(g)\geq0, ~\forall y \in\mathcal{R}
\right\},
\end{equation} 
\end{definition}
As before, $(\cdot)^\bullet$ is an anti-monotonic operation and its image is always a closed convex cone. Furthermore,  one has that $(\cdot)_1^\bullet{^\bullet}=(\cdot)_1$, and $(\cdot)_1 \subsetneq (\cdot)_2$ if and only if $(\cdot)_2^\bullet \subsetneq (\cdot)_1^\bullet$, whenever $(\cdot)_1$ and $(\cdot)_2$ are closed convex cones.

In what follows, we verify that, analogously to Section \ref{sec:dualityADG}, the dual $\bdomain^\bullet$ is completely characterised by a closed convex set of states. But before doing that, we have to clarify what is a state in this context.
In the previous section, we defined a nonnegative linear functional (operator) as a map that assigns nonnegative real numbers to gambles that are sure gains, that is to gambles satisfying the condition for rationality axiom A.1. In the actual bounded rationality theory, we have replaced sure gains with bounded sure gains. Hence, to define what is a state  we cannot refer to nonnegative gambles but to gambles that are SOS. This means that, consistently with axiom bA.1, from the adopted bounded perspective on rationality, states are linear operators that assign nonnegative real numbers to SOS, and that additionally preserve the unit gamble. This latter condition is equivalent to: 
$$
y_0=L(1)=1.
$$
In the aim of reducing the dual of a BADG to sets of states, we thus first provide a characterisation of nonnegative linear operators. 
In doing so, we define the matrix $M_{n,d}({y}):=L({v}_d(x_1,\dots,x_n){v}_d(x_1,\dots,x_n)^\top)$, where  the linear operator is applied component-wise.
For instance, in the case $n=1$ and $d=2$, we have that
$$
M_{1,2}({y})=L({v}_2(x_1){v}_2(x_1)^\top)=L\left(\begin{bmatrix}
 1 & x_1 & x_1^2\\
 x_1 & x_1^2 & x_1^3\\
 x_1^2 & x_1^3 & x_1^4\\
\end{bmatrix}\right)=\begin{bmatrix}
 y_0 & y_1 & y_2\\
 y_1 & y_2 & y_3\\
 y_2 & y_3 & y_4\\
\end{bmatrix}.
$$
In discussing properties of the dual space, we need the following well-known result from linear algebra:
\begin{lemma}\label{fact:TR}
For any $M \in \reals^{n\times n}$ and $v \in \reals^n$, it holds that
\begin{equation}\label{eq:matrixTR}Tr(M (v v^\top)) = Tr((v v^\top)M) =  v^\top M v.\end{equation}
\end{lemma}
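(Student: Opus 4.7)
The plan is to reduce both equalities to the cyclic invariance of the trace together with the observation that the scalar $v^\top M v$ is a $1\times 1$ matrix and hence coincides with its own trace. Concretely, I would first invoke the identity $\mathrm{Tr}(AB)=\mathrm{Tr}(BA)$ (valid whenever the two products are defined and square), applied with $A=M$ and $B=vv^\top$, to obtain $\mathrm{Tr}(M(vv^\top))=\mathrm{Tr}((vv^\top)M)$, which handles the first equality of \eqref{eq:matrixTR} immediately.

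For the remaining equality, I would view $v$ as an $n\times 1$ matrix and $v^\top$ as a $1\times n$ matrix, so that the product $Mv$ is $n\times 1$. Applying cyclic invariance again, $\mathrm{Tr}((Mv)v^\top)=\mathrm{Tr}(v^\top(Mv))$; but $v^\top M v$ is a $1\times 1$ matrix, whose trace is the scalar itself. This yields $\mathrm{Tr}(M(vv^\top))=v^\top M v$, closing the chain of equalities.

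As a sanity check that requires no extra machinery, one can verify the identity by direct expansion: writing $(M(vv^\top))_{ii}=\sum_j M_{ij}v_j v_i$ and summing over $i$ gives $\sum_{i,j}v_i M_{ij}v_j=v^\top M v$. Since the statement is a standard textbook fact, there is no genuine obstacle; the only minor subtlety is to keep the shapes aligned when treating $v$ and $v^\top$ as rectangular matrices so that the cyclic property is applied to compatible products. The argument could be compressed into a single line, but splitting it as above makes the two equalities in \eqref{eq:matrixTR} explicit.
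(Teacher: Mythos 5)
Your argument is correct: the cyclic invariance of the trace gives both equalities, and the direct index expansion $\sum_{i,j} v_i M_{ij} v_j = v^\top M v$ independently confirms the result. The paper states this lemma as a well-known fact without proof, so there is nothing to compare against; your write-up is a complete and valid justification.
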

\begin{proposition}\label{prop:LisTR}
Let $g \in \reals_{2d}[x_1,\dots,x_n]$ and $Q$ a real symmetric-matrix such that $g(x_1,\dots,x_n)= v^\top_{d}(x_1,\dots,x_n)Q v_{d}(x_1,\dots,x_n)$. Then 
for every $y \in \reals^{{s_n(2d)}}$, it holds that $L(g) = Tr({Q} M_{n,d}({y}))$, 
where $M_{n,d}({y})=L({v}_d(x_1,\dots,x_n){v}_d(x_1,\dots,x_n)^\top)$ and $L(g)$ is completely determined by ${y}$ via the definition \eqref{eq:linearoperator}.
\end{proposition}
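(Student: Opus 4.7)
The plan is to rewrite $g$ in trace form using Lemma \ref{fact:TR} and then push the linear operator $L$ inside the trace, exploiting linearity and the fact that the coefficients of $Q$ do not depend on $x_1,\dots,x_n$.

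Concretely, I would first apply Lemma \ref{fact:TR} with $M=Q$ and $v=v_d(x_1,\dots,x_n)$, obtaining
\[
g(x_1,\dots,x_n) \;=\; v_d(x_1,\dots,x_n)^\top Q\, v_d(x_1,\dots,x_n) \;=\; \mathrm{Tr}\!\bigl(Q\, v_d(x_1,\dots,x_n) v_d(x_1,\dots,x_n)^\top\bigr).
\]
Writing the trace out as $\sum_{i,j} Q_{ij}\,[v_d v_d^\top]_{ji}$, each entry $[v_d v_d^\top]_{ji}$ is a monomial of degree at most $2d$ in $x_1,\dots,x_n$, so it lies in $\gambles_{2d}$ and $L$ is well defined on it.

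The second step is to apply $L$ to both sides. Since $L$ is linear and the coefficients $Q_{ij}$ are real scalars independent of $x_1,\dots,x_n$, we can interchange $L$ with the finite sum:
\[
L(g) \;=\; L\!\left(\sum_{i,j} Q_{ij}\, [v_d v_d^\top]_{ji}\right) \;=\; \sum_{i,j} Q_{ij}\, L\!\bigl([v_d v_d^\top]_{ji}\bigr) \;=\; \sum_{i,j} Q_{ij}\,[M_{n,d}(y)]_{ji},
\]
where in the last equality I used the definition $M_{n,d}(y)=L(v_d v_d^\top)$, i.e.\ that $L$ applied componentwise to the matrix $v_d v_d^\top$ yields $M_{n,d}(y)$, whose entries are (up to indexing) the moments $y_{\alpha_1\dots\alpha_n}$ from \eqref{eq:linearoperator}. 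Recognising the last sum as $\mathrm{Tr}(Q\, M_{n,d}(y))$ finishes the argument.

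There is no real obstacle here: the statement is essentially the observation that the trace form $g \mapsto \mathrm{Tr}(Q\,(\cdot))$ commutes with the linear functional $L$ when applied entrywise to the outer product $v_d v_d^\top$. The only point worth checking carefully is that the representation $g=v_d^\top Q v_d$ guarantees $[v_d v_d^\top]_{ji}\in \gambles_{2d}$ for every entry, so that $L$ is applicable in each slot; this is immediate since the maximum total degree of any monomial $x^\alpha x^\beta$ with $|\alpha|,|\beta|\le d$ is $2d$.
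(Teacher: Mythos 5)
Your proposal is correct and follows essentially the same route as the paper's proof: apply Lemma \ref{fact:TR} to write $g=\mathrm{Tr}(Q\,v_dv_d^\top)$, then use linearity of $L$ and of the trace to pull $L$ inside and obtain $\mathrm{Tr}(Q\,M_{n,d}(y))$. You merely spell out the interchange of $L$ with the trace as an explicit double sum over entries, which the paper compresses into the phrase ``by linearity of $L$ and trace.''
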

\begin{proof}
By Lemma \ref{fact:TR} and linearity of $L$ and trace, we obtain that 
\begin{align}
\nonumber
L(g) & = L({v}_d(x_1,\dots,x_n)^\top {Q} {v}_d(x_1,\dots,x_n)) \\
\nonumber
 &= L( Tr( {Q} {v}_d(x_1,\dots,x_n){v}_d(x_1,\dots,x_n)^\top)) \\
\nonumber
&= Tr( {Q} L({v}_d(x_1,\dots,x_n){v}_d(x_1,\dots,x_n)^\top))\\
\nonumber
&=Tr({Q} M_{n,d}({y})),
\end{align}
 where $M_{n,d}({y})=L({v}_d(x_1,\dots,x_n){v}_d(x_1,\dots,x_n)^\top)$. 
\end{proof}

We then verify that
\begin{proposition}\label{prop:igno}
Let $\bdomain=\Sigma_{2d}$. Then its dual is 
\begin{equation}\label{eq:dualmoment}
\bdomain^\bullet=\left\{{y} \in \reals^{{s_n(2d)}}: M_{n,d}({y})\geq0\right\},
\end{equation} 
\end{proposition}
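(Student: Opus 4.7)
The plan is to unwind the definition of $\bdomain^\bullet$ using the SOS matrix representation, and then to invoke the self-duality of the cone of positive semi-definite matrices under the trace inner product.

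First, I would fix $y \in \reals^{s_n(2d)}$ with associated linear functional $L$ via \eqref{eq:linearoperator}. By Definition \ref{def:dual0}, $y \in \Sigma_{2d}^\bullet$ amounts to requiring $L(g) \geq 0$ for every $g \in \Sigma_{2d}$. By \eqref{eqn:SOSrepr} each such $g$ admits a representation $g = v_d^\top(x_1,\dots,x_n)\, Q\, v_d(x_1,\dots,x_n)$ with $Q$ real-symmetric and $Q \geq 0$, and conversely every such $Q$ produces an element of $\Sigma_{2d}$. Applying Proposition \ref{prop:LisTR}, $L(g) = Tr(Q\, M_{n,d}(y))$, so the condition becomes $Tr(Q\, M_{n,d}(y)) \geq 0$ for every real-symmetric $Q \geq 0$.

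Next I would observe that $M_{n,d}(y)$ is itself real-symmetric, being the componentwise image under $L$ of the symmetric matrix $v_d v_d^\top$. It then remains to apply the standard self-duality of the PSD cone: a real-symmetric matrix $M$ satisfies $M \geq 0$ if and only if $Tr(Q M) \geq 0$ for every real-symmetric $Q \geq 0$. For the nontrivial direction needed here, I would test with rank-one matrices $Q = u u^\top$ and use Lemma \ref{fact:TR} to obtain $u^\top M_{n,d}(y)\, u = Tr(Q\, M_{n,d}(y)) \geq 0$ for every $u \in \reals^{s_n(d)}$, which is precisely $M_{n,d}(y) \geq 0$. The converse implication is immediate from linearity of the trace using a spectral decomposition of $Q$ as a nonnegative combination of rank-one PSD matrices.

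This argument has no essential obstacle; the substantive content is concentrated in Proposition \ref{prop:LisTR}, which transfers the gamble-level condition $L(g) \geq 0$ into a matrix trace, after which the well-known self-duality of PSD matrices settles the proposition.
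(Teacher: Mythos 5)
Your argument is correct and follows essentially the same route as the paper: unwind the definition of the dual via the representation $g = v_d^\top Q v_d$ with $Q \geq 0$, apply Proposition \ref{prop:LisTR} to turn $L(g) \geq 0$ into $Tr(Q\,M_{n,d}(y)) \geq 0$, and conclude by self-duality of the PSD cone (which the paper cites as Fej\'er's trace theorem rather than re-proving via rank-one tests and spectral decomposition, as you do). The only difference is that you supply a short proof of that cited fact, which is a harmless elaboration.
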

\begin{proof}
Recall that, by definition, $M_{n,d}({y})=L({v}_d(x_1,\dots,x_n){v}_d(x_1,\dots,x_n)^\top)$, and that, by Equation \eqref{eqn:SOSrepr}, any $g \in \Sigma_{2d}$ can be written as ${v}_d(x_1,\dots,x_n)^\top {Q} {v}_d(x_1,\dots,x_n)$, with ${Q}\geq0$.
Fej\'er's trace theorem \citep[Ex.2.24]{boyd2004convex} states that a matrix $M \in \reals_s^{s_1(d)\times s_1(d)}$ is $M \geq 0$ if and only if $Tr(Q M)\geq 0$, for any $Q \geq 0$.
Hence Equation \eqref{eq:dualmoment} is an immediate consequence of the following equivalences:
$$\begin{array}{ccr}
 & M_{n,d}({y})\geq0 & \\
 \iff & Tr({Q} M_{n,d}({y})), \forall Q \geq 0 & \text{(Fej\'er's trace theorem)} \\
\iff & L(g) \geq 0, \forall g \in \Sigma_{2d} & \text{(Proposition \ref{prop:LisTR})}\\
\end{array}$$
  \end{proof}
Hence a  linear operator $L$ is nonnegative if and only if  $M_{n,d}({y})\geq0$.
Obviously, if $y=0$, then $L(g)\geq 0$, for each  $ g \in \Sigma_{2d} $, and thus $\{0\}^\bullet= \gambles_{2d}$. From this observation, Proposition \ref{prop:igno} and the properties of $(\cdot)^\bullet$ we therefore immediately get:
\begin{proposition}\label{prop:Bczesc}
 It holds that 
 \begin{enumerate}
   \item $(\gambles_{2d})^\bullet=\{0\}$ and $\gambles_{2d}=(\{0\})^\bullet$;
 \item $(\Sigma_{2d})^\bullet=\mathcal{Y}^+$ and $\Sigma_{2d}=(\mathcal{Y}^+)^\bullet$,
  \end{enumerate}
  where $\ydomain^+:=\left\{{y} \in \reals^{{s_n(2d)}}: M_{n,d}({y})\geq0\right\}$
\end{proposition}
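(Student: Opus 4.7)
The plan is to use the two facts explicitly invoked just before the statement: (i) $(\cdot)^\bullet$ is an involution on closed convex cones, so that $A^{\bullet\bullet}=A$ for any closed convex cone $A$; (ii) Proposition \ref{prop:igno}, which already establishes one half of item (2). Combined with the trivial observation that $\{0\}$, $\gambles_{2d}$ and $\Sigma_{2d}$ (closedness of the latter by Robinson 1969, as noted earlier in the paper) are all closed convex cones, everything reduces to verifying one equality in each item and then taking duals.

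For item (1), I would first establish the second equality $\gambles_{2d}=\{0\}^\bullet$ directly from Definition \ref{def:dual0}. If $y=0$, then by \eqref{eq:linearoperator} the associated functional $L$ is identically zero on every monomial and hence, by linearity, on every $g\in\gambles_{2d}$; in particular $L(g)\geq 0$ is trivially satisfied, so every $g$ belongs to $\{0\}^\bullet$. The reverse inclusion is immediate since by definition $\{0\}^\bullet\subseteq \gambles_{2d}$. Applying $(\cdot)^\bullet$ on both sides and using $\{0\}^{\bullet\bullet}=\{0\}$ then gives $(\gambles_{2d})^\bullet = \{0\}^{\bullet\bullet} = \{0\}$, which is the first equality.

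For item (2), Proposition \ref{prop:igno} gives directly $(\Sigma_{2d})^\bullet=\ydomain^+$. Since $\Sigma_{2d}$ is a closed convex cone, dualising both sides yields $(\ydomain^+)^\bullet=(\Sigma_{2d})^{\bullet\bullet}=\Sigma_{2d}$, which is the remaining equality.

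There is no real obstacle here: the content of the statement is entirely absorbed into Proposition \ref{prop:igno} and the double-dual property. The only thing that requires any argument at all is the trivial direct check that $\{0\}^\bullet=\gambles_{2d}$ (to anchor the chain before invoking involutivity), and this is a one-line observation using linearity of $L$ on the monomial basis. Everything else is formal manipulation of the anti-monotone involution $(\cdot)^\bullet$ on closed convex cones.
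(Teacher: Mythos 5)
Your proposal is correct and follows essentially the same route as the paper, which derives the proposition ``immediately'' from the observation that $\{0\}^\bullet=\gambles_{2d}$, from Proposition \ref{prop:igno}, and from the double-dual property of $(\cdot)^\bullet$ on closed convex cones. Your explicit mention that closedness of $\Sigma_{2d}$ (Robinson 1969) is what licenses the involution step is a welcome clarification, but it is the same argument.
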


Everything is therefore ready to provide an analogous characterisation of coherence as done with Proposition \ref{prop:czesc2} but for BADG.
\begin{proposition}\label{prop:Bczesc2}
Let $\bdomain \subseteq \gambles_{2d}$ be a closed convex cone. The following claims are equivalent
 \begin{enumerate}
   \item $\bdomain$ is coherent;
     \item $\bdomain\supseteq \Sigma_{2d}$ and $\bdomain \neq \gambles_{2d}$;
   \item $(\bdomain)^\bullet\subseteq \mathcal{Y}^+$ and $(\bdomain)^\bullet \neq \{0\}$.
  \end{enumerate}
\end{proposition}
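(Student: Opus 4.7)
My plan is to mirror the proof of the analogous Proposition \ref{prop:czesc2}, substituting Proposition \ref{prop:Bczesc} for Proposition \ref{prop:czesc}, $\Sigma_{2d}$ for $\gambles^+$, and $\mathcal{Y}^+$ for $\mathcal{M}^+$. The overall structure has three parts: first show (2) $\Leftrightarrow$ (3) by pure duality bookkeeping, then (1) $\Rightarrow$ (2) by directly reading off bA.1 and bA.2, and finally (2) $\Rightarrow$ (1) by a contradiction argument collapsing $\bdomain^\bullet$ to $\{0\}$.

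For (2) $\Leftrightarrow$ (3), I would invoke the two items of Proposition \ref{prop:Bczesc}: from $(\gambles_{2d})^\bullet=\{0\}$ together with the involutivity of $(\cdot)^\bullet$ on closed convex cones, $\bdomain=\gambles_{2d}$ iff $\bdomain^\bullet=\{0\}$; similarly, from $(\Sigma_{2d})^\bullet=\mathcal{Y}^+$ and anti-monotonicity, $\Sigma_{2d}\subseteq\bdomain$ iff $\bdomain^\bullet\subseteq\mathcal{Y}^+$. The direction (1) $\Rightarrow$ (2) is immediate from the definition of coherence: bA.1 gives $\Sigma_{2d}\subseteq\bdomain$, and bA.2 applied to any $g\in\Sigma^-_{2d}$ yields $\bdomain\neq\gambles_{2d}$.

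The main content lies in (2) $\Rightarrow$ (1). Since bA.1 is immediate from (2), I would prove bA.2 by contradiction: suppose some $g\in\Sigma^-_{2d}\cap\bdomain$. Then for any $y\in\bdomain^\bullet$ the constraints $L(g)\geq 0$ (from $g\in\bdomain$) and $L(-g)\geq 0$ (from $-g\in\Sigma_{2d}\subseteq\bdomain$) together force $L(g)=0$; the goal is to upgrade this to $y=0$, which would contradict $\bdomain^\bullet\neq\{0\}$. The hard part will be precisely this upgrade. In the ADG analogue it is essentially free because $\sup g<0$ and $\mu\in\mathcal{M}^+$ give $\int g\,d\mu<0$ unless $\mu=0$. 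Here $y$ is only constrained by $M_{n,d}(y)\geq 0$, which is weaker than representability as a positive measure, so the implication is not automatic; I expect the argument to use an SOS decomposition $-g=\sum_i p_i^2$ so that $L(-g)=\sum_i \mathbf{p}_i^\top M_{n,d}(y)\mathbf{p}_i=0$ forces each coefficient vector $\mathbf{p}_i$ to lie in $\ker M_{n,d}(y)$, and then to exploit the strict positivity $-g\geq\epsilon>0$ together with the standard fact that a zero diagonal entry of a PSD matrix kills the corresponding row and column, propagated iteratively through the moment-matrix indexing, to conclude $M_{n,d}(y)=0$ and hence $y=0$.
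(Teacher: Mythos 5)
Your proposal follows the paper's proof step for step: the same duality bookkeeping for (2)$\Leftrightarrow$(3), the same reading-off of bA.1/bA.2 for (1)$\Rightarrow$(2), and the same contradiction scheme for (2)$\Rightarrow$(1). The first two parts are fine. The genuine gap sits exactly where you flag the ``hard part'': the upgrade from $L(g)=0$ to $y=0$. Your kernel-propagation idea cannot close it. The SOS decomposition $-g=\sum_i p_i^2$ does put each coefficient vector $\mathbf{p}_i$ into $\ker M_{n,d}(y)$, and iterating ``a zero diagonal entry of a PSD matrix kills its row and column'' does propagate zeros through the moment indexing --- but the propagation never reaches the diagonal entries $y_{2\alpha}$ with $|\alpha|=d$, which occur only in the bottom-right block of $M_{n,d}(y)$ and are not tied to any lower-degree entry. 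Concretely, for $n=d=1$ take $g=-1\in\Sigma^-_{2}$ and $y=(y_0,y_1,y_2)=(0,0,1)$: then $M_{1,1}(y)=\mathrm{diag}(0,1)\geq 0$ and $L(g)=-y_0=0$, yet $y\neq 0$.

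This is not a defect you could have repaired, because the implication (2)$\Rightarrow$(1) is false as stated. Take $\bdomain=\{c+bx_1+ax_1^2:\ a\geq 0\}\subseteq\gambles_{2}$ (again $n=d=1$). It is a closed convex cone with $\Sigma_{2}\subseteq\bdomain\subsetneq\gambles_{2}$, so (2) holds, and $\bdomain^\bullet=\{(0,0,y_2):y_2\geq 0\}$ is a nonzero subset of $\mathcal{Y}^+$, so (3) holds as well; yet $-1\in\Sigma^-_{2}\cap\bdomain$, so bA.2 fails and $\bdomain$ is not coherent. The paper's own proof hides this by asserting in one line that, for $y\in\mathcal{Y}^+$ and $g\in\Sigma^-_{2d}$, ``$L(g)\geq 0$ if and only if $y=0$'' --- precisely the claim the example above refutes. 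The ADG analogue (Proposition \ref{prop:czesc2}) goes through because there a dual element is a genuine nonnegative measure $\mu$, and $\int g\,d\mu\leq(\sup g)\,\mu(\pspace)<0$ unless the total mass, and hence $\mu$, vanishes; the constraint $M_{n,d}(y)\geq 0$ is strictly weaker than being the truncated moment vector of a nonnegative measure, and that weakening is exactly what breaks the argument. So your instinct about where the difficulty lies is correct, but the step cannot be completed: the statement needs an additional hypothesis or a weaker conclusion before (2)$\Rightarrow$(1) can be proved.
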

\begin{proof} (2) $\Leftrightarrow$ (3): From Proposition \ref{prop:Bczesc}, $(\bdomain)^\bullet = \{0\}$ if and only if $\bdomain = \gambles_{2d}$, and $\bdomain\supseteq\Sigma_{2d}$ if and only if $(\bdomain)^\bullet\subseteq \mathcal{Y}^+$.
\\ (1) $\Rightarrow$ (2): Assume $\bdomain$ is coherent. By bA.1 $\bdomain \supseteq \Sigma_{2d}$, and by bA.2 there is $-g \in \Sigma_{2d}$ such that $\sup g < 0$ and $g \notin \bdomain$. 
\\ (2) $\Rightarrow$ (1): Let $\Sigma_{2d} \subseteq \bdomain \subsetneq \gambles_{2d}$. 
First of all, notice that, by Proposition \ref{prop:Bczesc}, $(\bdomain)^\bullet\subseteq (\Sigma_{2d})^\bullet = \mathcal{Y}^+$. 
Now, assume that $\bdomain$ is not coherent. This means
bA.2 fails. Hence we can pick $-g \in \Sigma_{2d}$ such that $\sup g < 0$ and $g \in \bdomain$. 
Consider $y \in \mathcal{Y}^+$. It holds that $L(g) \geq 0$ if and only if $y=0$, meaning that $(\bdomain)^\bullet=\{0\}$ and therefore, by Proposition~\ref{prop:Bczesc} again, $\bdomain=\gambles_{2d}$, a contradiction.
\end{proof}

As before, we denote by $\mathscr{S}$ the set of states (here seen as a subset of ${y} \in \reals^{{s_n(2d)}}$). 
By Proposition \ref{prop:Bczesc2} and reasoning exactly as for Theorem \ref{prop:dualcharges}, we then have the following result (see for instance \citealt{lasserre2009moments}).
\begin{theorem}
\label{th:dualSOS}
The map
\[ \mathcal{C} \mapsto \mathcal{P}:=\mathcal{C}^\bullet\cap \mathscr{S}\]
is a bijection between BADGs and closed convex subsets of $\mathscr{S}$. We can therefore identify the dual of 
a BADG $\bdomain$ with
\begin{equation}
\label{eq:dualM1}
\bdomain^\bullet=\left\{{y} \in \reals^{{s_n(2d)}}:  L(g)\geq0, ~~ L(1)=1, ~~ M_{n,d}({y})\geq0, ~\forall g \in \bdomain\right\},
\end{equation} 
where $L(g)$ is completely determined by ${y}$ via the definition \eqref{eq:linearoperator}.
\end{theorem}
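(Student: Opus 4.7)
The plan is to mirror the proof of Theorem \ref{prop:dualcharges0}, replacing the pair $(\gambles^+,\mathcal{M}^+)$ by $(\Sigma_{2d},\mathcal{Y}^+)$ and using Propositions \ref{prop:Bczesc} and \ref{prop:Bczesc2} at exactly the points where Propositions \ref{prop:czesc} and \ref{prop:czesc2} were used before. The only genuinely new ingredient is the explicit form \eqref{eq:dualM1}, which will come for free from combining $\mathcal{C}^\bullet \subseteq \mathcal{Y}^+$ (Proposition \ref{prop:Bczesc2}) with the definition of $\mathscr{S}$ and Proposition \ref{prop:igno}.

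\textbf{Well-definedness.} First I would take a BADG $\bdomain$ and show that $\mathcal{P}=\bdomain^\bullet\cap \mathscr{S}$ is a nonempty closed convex subset of $\mathscr{S}$. By Proposition \ref{prop:Bczesc2}, $\bdomain^\bullet$ is a closed convex cone contained in $\mathcal{Y}^+$ and distinct from $\{0\}$. Pick any nonzero $y \in \bdomain^\bullet$; since $M_{n,d}(y)\geq0$ its top-left entry $y_0 = L(1)$ is nonnegative, and I would argue $y_0>0$ (otherwise positive semidefiniteness of $M_{n,d}(y)$ would force the whole first row/column to vanish, and iterating this forces $y=0$). Normalising $y$ by $y_0$ gives an element of $\mathcal{P}$. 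Closedness and convexity are preserved by intersection of closed convex sets.

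\textbf{Injectivity.} Next I would prove $\mathbb{R}_+\mathcal{P}=\bdomain^\bullet$, where $\mathbb{R}_+\mathcal{P}=\{\lambda y : \lambda\geq0,\ y\in\mathcal{P}\}$. The inclusion $\mathbb{R}_+\mathcal{P}\subseteq\bdomain^\bullet$ is immediate because $\bdomain^\bullet$ is a cone containing $\mathcal{P}$. Conversely, any $y\in\bdomain^\bullet$ is either zero or, by the argument above, has $y_0>0$ so that $y/y_0\in\mathcal{P}$. Applying $(\cdot)^\bullet$ and using $((\cdot)^\bullet)^\bullet = (\cdot)$ on closed convex cones (the fact recalled right after Definition \ref{def:dual0}), I recover $\bdomain = (\mathbb{R}_+\mathcal{P})^\bullet$. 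Hence $\mathcal{P}$ determines $\bdomain$, proving injectivity.

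\textbf{Surjectivity.} Given an arbitrary nonempty closed convex subset $\mathcal{P}\subseteq\mathscr{S}$, I would set $\bdomain := (\mathbb{R}_+\mathcal{P})^\bullet$ and verify it is BADG. The cone $\mathbb{R}_+\mathcal{P}$ is a closed convex cone (closedness uses that $\mathcal{P}$ is a closed convex set not containing the origin, the latter because $y_0=1\neq 0$ for states); it is contained in $\mathcal{Y}^+$ (since elements of $\mathscr{S}$ satisfy $M_{n,d}(y)\geq0$) and different from $\{0\}$. By Proposition \ref{prop:Bczesc2} in the direction (3)$\Rightarrow$(1), $\bdomain$ is then BADG. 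Finally $\mathcal{P}=\mathbb{R}_+\mathcal{P}\cap \mathscr{S} = ((\mathbb{R}_+\mathcal{P})^\bullet)^\bullet\cap \mathscr{S} = \bdomain^\bullet \cap \mathscr{S}$, closing the loop.

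\textbf{Explicit dual.} For the description \eqref{eq:dualM1} it suffices to combine Proposition~\ref{prop:Bczesc2}(3) with the definitions: $\bdomain^\bullet\subseteq \mathcal{Y}^+$ encodes $M_{n,d}(y)\geq 0$ by Proposition~\ref{prop:igno}, intersecting with $\mathscr{S}$ gives $L(1)=1$, and the condition $L(g)\geq 0$ for all $g\in\bdomain$ is exactly $y\in\bdomain^\bullet$. The main potential obstacle is the subtle point in well-definedness that $y_0>0$ for nonzero $y\in\bdomain^\bullet\subseteq \mathcal{Y}^+$; this needs a short lemma-style argument from the positive semidefiniteness of $M_{n,d}(y)$, but it is otherwise standard. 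Everything else is a mechanical transcription of the proof of Theorem~\ref{prop:dualcharges0}.
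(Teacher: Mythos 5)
Your overall architecture is exactly the paper's: the published proof is a line-by-line transcription of Theorem~\ref{prop:dualcharges0} using Propositions~\ref{prop:Bczesc} and~\ref{prop:Bczesc2}, just as you propose, and your reading of the explicit form \eqref{eq:dualM1} as ``$\mathcal{Y}^+$ plus normalisation plus $L(g)\geq0$'' matches the paper. The problem is the one step where you go beyond what the paper writes: the claim that a nonzero $y\in\bdomain^\bullet\subseteq\mathcal{Y}^+$ must have $y_0>0$ because ``positive semidefiniteness of $M_{n,d}(y)$ would force the whole first row/column to vanish, and iterating this forces $y=0$.'' The first half is right (a PSD matrix with a zero diagonal entry has zero row and column there), but the first row of $M_{n,d}(y)$ only contains the moments $y_\alpha$ with $|\alpha|\le d$; the moments of degree between $d+1$ and $2d$ sit strictly inside the matrix and are not reached by any iteration of this argument. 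Concretely, for $n=d=1$ take $y=(0,0,1)$: then $M_{1,1}(y)=\begin{psmallmatrix}0&0\\0&1\end{psmallmatrix}\succeq0$, so $y$ is a nonzero element of $\mathcal{Y}^+$ with $y_0=0$. Since $(\Sigma_{2d})^\bullet=\mathcal{Y}^+$ by Proposition~\ref{prop:igno}, this already occurs for the vacuous BADG $\bdomain=\Sigma_{2d}$.

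This is not cosmetic: both your nonemptiness argument (pick any nonzero $y$ and normalise) and the identity $\mathbb{R}_+\mathcal{P}=\bdomain^\bullet$ lean on that false lemma, and the identity genuinely fails --- in the example above $(0,0,1)\in\bdomain^\bullet\setminus\mathbb{R}_+\mathcal{P}$, and $\mathbb{R}_+\mathcal{P}$ is not even closed (it contains $(\epsilon,0,1)$ for all $\epsilon>0$ but not its limit). This is precisely where the BADG setting departs from Theorem~\ref{prop:dualcharges0}: there a nonzero nonnegative charge automatically has $\mu(\Omega)>0$ by monotonicity, so normalisation is harmless, whereas the cone $\mathcal{Y}^+$ has a nontrivial face $\{y_0=0\}$. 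In fairness, the paper's own proof asserts ``$\mathcal{P}$ is nonempty'' and ``$\mathbb{R}_+\mathcal{P}=\bdomain^\bullet$'' with no more justification than you supply, so you have located the correct pressure point --- but the patch you propose does not hold, and closing the gap requires a different argument (for instance, working with the closed conic hull $\overline{\mathbb{R}_+\mathcal{P}}$, whose dual agrees with $(\mathbb{R}_+\mathcal{P})^\bullet$, and establishing nonemptiness of $\mathcal{P}$ by some means other than $\bdomain^\bullet\neq\{0\}$, from which it does not follow).
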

\begin{proof}
Let $\bdomain$ be a coherent BADG. By Proposition \ref{prop:Bczesc2}, we get that $\bdomain^\bullet$ is a closed convex cone included in $\mathcal{Y}^+$ that does not reduced to the origin.  
Thus $\mathcal{P}$ is nonempty. Preservation of closedness and convexity by finite intersections yields that $\mathcal{P}$ is closed and convex. Furthermore $\mathbb{R}_+\mathcal{P}=(\bdomain)^\bullet$, and therefore $\bdomain= (\mathbb{R}_+\mathcal{P})^\bullet$, where $\mathbb{R}_+\mathcal{P}:=\{ \lambda y : \lambda \geq 0, y \in \mathcal{P}\}$, meaning that the map is an injection.
We finally verify that the map is also a surjection. To do this, let $\mathcal{P} \subseteq \mathscr{S}$ be a non empty closed convex set of states. It holds that $\mathbb{R}_+\mathcal{P}$ is a closed convex cone included in $\mathcal{Y}^+$ different from $\{0\}$. Again by Proposition \ref{prop:Bczesc2},  we  conclude that the dual $(\mathbb{R}_+\mathcal{P})^\bullet$  is a coherent BADG and $\mathcal{P}=\mathbb{R}_+\mathcal{P}\cap \mathscr{S}= (\mathbb{R}_+\mathcal{P}){{}^\bullet{^\bullet}}\cap \mathscr{S}$. 
\end{proof}


In Section \ref{sec:unbounded gambles}, by considering  the space of all measurable gambles and identifying gambles representing sure gain with nonnegative gambles, states coincide with probability measures. Henceforth we have shown that the dual of an ADG is a closed convex set of probability  measures. In \eqref{eq:dualM1} there is no reference to probability, and thus there is no guaranty that in the bounded rationality case states correspond indeed to probabilities. However, by considering the Borel sigma-algebra on $\reals^n$, we can define  the integral 
$ \int x_1^{\alpha_1}x_2^{\alpha_2},\dots,x_n^{\alpha_n} d\mu$ with respect to the finite signed measure $\mu$. In this context, we can interpret 
$y_{\alpha_1\alpha_2\dots \alpha_n}$ as the expectation of $x_1^{\alpha_1}x_2^{\alpha_2},\dots,x_n^{\alpha_n}$ w.r.t.\ the  signed measure $\mu$.

Note that $y_0=L(1)=1$ implies that $\int 1 d\mu=1$ under this interpretation (normalization).
Therefore, we can interpret  $M_{n,d}({y})$ as a truncated moment matrix. 
However, since $\bdomain$ does not include all nonnegative gambles, we cannot conclude that
the signed measures are nonnegative or, in other words, that $\mu$ is  a probability measure. The constraint $M_{n,d}({y}) \geq 0$  is not strong enough to guarantee nonnegativity of $\mu$ (it is only a necessary condition).


In the standard theory, negative probabilities are considered a manifestation of incoherence. Here, they are a consequence of the assumption of bounded rationality.
Finally, the dual of the lower prevision problem \eqref{eq:lowerprebounded} is then given by the convex SemiDefinite Programming (SDP) problem:
\begin{equation}
 \label{eq:lowerpreboundeddual}
\inf\limits_{{y} \in \reals^{s_n(2d)}} L(h), ~~~~~s.t.~~~~~L(g)\geq 0, ~~~~~L(1)=1, ~~~~~M_{n,d}(y)\geq 0.
 \end{equation}

 \subsection{Non-SOS positive polynomials}
 What does it mean for our theory of bounded rationality that there are  positive polynomials that are not SOS?
 \\
 First, notice that, by Proposition \ref{prop:Bczesc2}, $g \in \Sigma_{2d}$ if and only if for every $y \in \ydomain^+$, it holds that $L(g) \geq 0$ (where $L$ is completely determined by ${y}$ via equation \eqref{eq:linearoperator}). This means that if $g$ is positive but not SOS, then there is $y \in \ydomain^+$ such that $L(g) < 0$. Another, equivalent, way to see this goes as follows.
 Let us assume that the polynomial $g\in \gambles_{2d}$ is positive but not SOS, its lower prevision is:
   $$
 \sup_{\lambda_0\in \reals} \lambda_0  \text{ s.t. } g-\lambda_0\in \Sigma_{2d}.
 $$
 The solution of the above problem is $\lambda_0<0$.
 Note in fact that, since $g(x)=v^T(x) Q v(x)$ is not SOS, this implies that the  matrix $Q$ is indefinite (it is not a PSD matrix).
Hence,  the only way to satisfy $g-\lambda_0\in \Sigma_{2d}$ is for  $\lambda_0<0$. 
\\
By duality, we can then prove that the problem
 $$
 \inf_{{y} \in \reals^{{s_n(2d)}}} L(g) \text{ s.t. } L(1)=1, ~M_{n,d}({y})\geq0,
 $$
 has a negative solution, i.e. $L(g)<0$. 
 \begin{example}
 Let us consider the positive non-SOS polynomial $f(x)=1+x_1^4x_2^2+x_1^2x_2^4-x_1^2x_2^2$ in $\reals_{6}[x_1,x_2]$, the basis  $v_3(x)=[1,x_1,x_2,x_1^2,x_1 x_2,x_2^2, x_1^3,x_1^2x_2,x_1x_2^2,x_2^3]^T$ and the following PSD matrix
 $M_{2,3}(y)=L(v_3(x)v_3(x)^T)$:
\begin{align}
\label{eq:darioMat}
\begin{bsmallmatrix}
 1 &y_{10} &y_{01} &y_{20} &y_{11} &y_{02} &y_{30} &y_{21} &y_{12} &y_{03}\\
y_{10} &y_{20} &y_{11} &y_{30} &y_{21} &y_{12} &y_{40} &y_{31} &y_{22} &y_{13}\\
y_{01} &y_{11} &y_{02} &y_{21} &y_{12} &y_{03} &y_{31} &y_{22} &y_{13} &y_{04}\\
y_{20} &y_{30} &y_{21} &y_{40} &y_{31} &y_{22} &y_{50} &y_{41} &y_{32} &y_{23}\\
y_{11} &y_{21} &y_{12} &y_{31} &y_{22} &y_{13} &y_{41} &y_{32} &y_{23} &y_{14}\\
y_{02} &y_{12} &y_{03} &y_{22} &y_{13} &y_{04} &y_{32} &y_{23} &y_{14} &y_{05}\\
y_{30} &y_{40} &y_{31} &y_{50} &y_{41} &y_{32} &y_{60} &y_{51} &y_{42} &y_{33}\\
y_{21} &y_{31} &y_{22} &y_{41} &y_{32} &y_{23} &y_{51} &y_{42} &y_{33} &y_{24}\\
y_{12} &y_{22} &y_{13} &y_{32} &y_{23} &y_{14} &y_{42} &y_{33} &y_{24} &y_{15}\\
y_{03} &y_{13} &y_{04} &y_{23} &y_{14} &y_{05} &y_{33} &y_{24} &y_{15} &y_{06}\\
\end{bsmallmatrix}=\begin{footnotesize}\begin{bsmallmatrix} 1 & 0 & 0 & 353 & 0 & 353 & 0 & 0 & 0 & 0\\ 0 & 353 & 0 & 0 & 0 & 0 & 249572 & 0 & 66 & 0\\ 0 & 0 & 353 & 0 & 0 & 0 & 0 & 66 & 0 & 249572\\ 353 & 0 & 0 & 249572 & 0 & 66 & 0 & 0 & 0 & 0\\ 0 & 0 & 0 & 0 & 66 & 0 & 0 & 0 & 0 & 0\\ 353 & 0 & 0 & 66 & 0 & 249572 & 0 & 0 & 0 & 0\\ 0 & 249572 & 0 & 0 & 0 & 0 & 706955894 & 0 & 17 & 0\\ 0 & 0 & 66 & 0 & 0 & 0 & 0 & 17 & 0 & 17\\ 0 & 66 & 0 & 0 & 0 & 0 & 17 & 0 & 17 & 0\\ 0 & 0 & 249572 & 0 & 0 & 0 & 0 & 17 & 0 & 706955894 \end{bsmallmatrix}\end{footnotesize}
\end{align}
Since  $L(f)=1+y_{42}+y_{24}-y_{22}$ and   $y_{22}=66, y_{24}=y_{42}=17$ in \eqref{eq:darioMat},  we have that $L(f)=-31<0$.
 The above matrix is PSD but it
is  not the truncated moment matrix of any probability measure (if it would be then $L(g)\nless 0$).
 \end{example}
How is it possible?

The cone $\ydomain^+=\left\{{y} \in \reals^{{s_n(2d)}}: M_{n,d}({y})\geq0\right\}$ includes the evaluation functionals of the polynomials.\footnote{An evaluation functional over $\gambles$ is a linear functional that evaluates each gamble $g$ at a point $\tilde{x}$.}
Evaluation functionals coincide with the rank one matrices $M$ 
 that give the evaluation of $g$ at a point $\tilde{x}$ as
$Tr(Q\,M)=g(\tilde{x})$,  for any decomposition of $g(x)=v_d^T(x) Q v_d(x)$. Such matrices have the form $M= v_d(\tilde{x})v_d(\tilde{x})^T$.
However, contrary to what happens in the standard theory of desirable gambles as described in Sections \ref{sec:TDG} and \ref{sec:dualityADG}, these matrices do not exhaust all extremes of the closed convex set $\ydomain^+$.

Said  in another way, since in the space of Borel measures the evaluation functionals are the atomic measures and since $v(\tilde{x})v(\tilde{x})^T=\int v_d(x)v_d(x)^T\delta_{\{\tilde{x}\}}dx $, there does not exist a mixture of atomic measures $\sum_{i=1}^m w_i \delta_{\{\tilde{x}^{(i)}\}}$ such that  
$$
\int v_3(x)v_3(x)^T \left(\sum_{i=1}^m w_i \delta_{\{\tilde{x}^{(i)}\}}\right)dx = \sum_{i=1}^m w_i v_3(\tilde{x}^{(i)})v_3(\tilde{x}^{(i)})^T =M_{2,3}(y). 
$$  
In \eqref{eq:darioMat}, the only way to satisfy the above equality is that some of  weights $w_i$ are negative.

Similarly, since $-L(-g)=L(g)$, we can also conclude that, for a negative gamble $g$ whose inverse $-g$ is not SOS, we have  $L(g)>0$.
 Alice may accept a negative gamble  not belonging to $\Sigma^-_{2d}$! In BADG, this is allowed because evaluating the negativity of a non SOS gamble not in $\Sigma^-_{2d}$
 may be computationally intractable.
 In the next section, we will show that/when we can use BADG as a computable approximating theory for ADG.

\section{BADG as an approximating theory for ADG}
\label{sec:badapprox}
We are going to show that we can use BADG as a computable approximating theory for ADG. Since we are dealing with unbounded gambles,
we will refer to the ADG formulation in Section \ref{sec:unbounded gambles}.
So let us consider the BADG set $\bdomain=\posi(G \cup \Sigma_{2d})$ and  the corresponding   ADG set
$\domain=\posi(G \cup \gambles^+)$ (same $G$), where $ \gambles^+$ is the set of measurable non-negative gambles. We have the following result.
\begin{theorem}
\label{th:badgasapprox}
 Assume that $\domain$ avoids sure loss (i.e. satisfies A.2) and let $f \in \gambles_{2d}$, then BADG is a conservative approximation of
 ADG theory in the sense that 
$\underline{E}^*(f) \leq  \underline{E}(f)$, where  $\underline{E}(f)$ is the coherent lower prevision of $f$ computed with respect to the set
of probabiltiy measures compatible with Alice's assessments of desirability $G$.
\end{theorem}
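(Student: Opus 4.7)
The plan is to exploit the fact that every sum-of-squares polynomial is pointwise nonnegative, which makes the BADG feasible set a subset of the ADG feasible set, and therefore the BADG supremum is bounded above by the ADG supremum.

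More precisely, I would first write down the two optimisation problems side by side. The BADG lower prevision is
\[
\underline{E}^*(f) = \sup\bigl\{\lambda_0 \in \mathbb{R} : \lambda_j \geq 0, \ f - \lambda_0 - \sum_{j=1}^{|G|} \lambda_j g_j \in \Sigma_{2d}\bigr\},
\]
while the ADG lower prevision corresponding to $\domain = \posi(G \cup \gambles^+)$ is
\[
\underline{E}(f) = \sup\bigl\{\lambda_0 \in \mathbb{R} : \lambda_j \geq 0, \ f - \lambda_0 - \sum_{j=1}^{|G|} \lambda_j g_j \in \gambles^+\bigr\}.
\]
Both are maximisations over the same decision variables $(\lambda_0, \lambda_1, \dots, \lambda_{|G|}) \in \mathbb{R} \times \mathbb{R}_+^{|G|}$, with the same objective $\lambda_0$; the only difference is the requirement that the residual be SOS (BADG) versus pointwise nonnegative (ADG).

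The key observation, already noted in the excerpt, is the chain of inclusions $\Sigma_{2d} \subseteq \gambles_{2d}^+ \subseteq \gambles^+$: every SOS polynomial is in particular a nonnegative measurable function. Consequently any tuple $(\lambda_0, \lambda_1, \dots, \lambda_{|G|})$ feasible for the BADG problem is automatically feasible for the ADG problem. Taking suprema over a smaller set cannot exceed the supremum over a larger set, yielding $\underline{E}^*(f) \leq \underline{E}(f)$.

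The only remaining care point is finiteness. The hypothesis that $\domain$ avoids sure loss (A.2) guarantees that the ADG problem has $\underline{E}(f) < +\infty$, so the inequality is not vacuous. If the BADG problem is infeasible, the convention $\sup \emptyset = -\infty$ makes the inequality trivially true, and if feasible, the containment of feasible sets does the job. I do not anticipate a serious obstacle here: the proof is essentially a one-line feasibility-containment argument, and the main conceptual point — rather than any technicality — is that bounded rationality trades tightness of the bound for computability by restricting the certificate of nonnegativity from ``any nonnegative function'' to ``SOS decomposition''.
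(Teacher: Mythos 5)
Your proposal is correct and matches the paper's own argument: the paper likewise observes that the SOS constraint is more demanding than pointwise nonnegativity, so the BADG feasible set is contained in the ADG one and the supremum can only decrease. Your extra remark on finiteness via A.2 and the $\sup\emptyset=-\infty$ convention is a sensible tidying of a point the paper only gestures at.
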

\begin{proof} Notice that since  $\domain$ satisfies A.2, $\bdomain$ also satisfies A.2, and hence bA.2. Now
 let $\lambda^*_0$ {be} the supremum value of $\lambda_0$ such that
 $ f-\lambda_0-\sum_{j}^{|G|} \lambda_j g_j \in \Sigma_{2d}$   and  $\lambda^{**}_0$ the supremum value 
 such that  $ f-\lambda_0-\sum_{j}^{|G|} \lambda_j g_j\geq0$. {Since the constraint $f-\lambda_0-\sum_{j}^{|G|} \lambda_j g_j \in \Sigma_{2d}$ is more demanding than $ f-\lambda_0-\sum_{j}^{|G|} \lambda_j g_j\geq0$, it  follows that $\lambda^*_0 \leq \lambda^{**}_0$.}  
\end{proof}

 The fact that  $\underline{E}[f]$ is equal to the  minimum of $f$ when $G$ is empty, i.e., Alice is in a state of full ignorance, is one of the reasons   why SOS polynomials   are used in optimisation.  In fact, $\underline{E}^*[f]$ provides a lower bound for the minimum of $f$ \citep{lasserre2009moments}.

\begin{example}[Covariance inequality]
Let us consider the case $n=2,d=1$, the matrix $M_{2,1}({y})$ is in this case
  $$
 M_{2,1}({y})=L\left(\begin{bmatrix}
  1 & x_1 & x_2\\
  x_1 & x_1^2 & x_1x_2\\
  x_2 & x_1 x_2 & x_2^2\\
 \end{bmatrix}\right)=\begin{bmatrix}
  1 & y_{10} & y_{01}\\
  y_{10} & y_{20} & y_{11}\\
  y_{01} & y_{11} & y_{02}\\
 \end{bmatrix}.
 $$
 Let us assume that Alice finds the following $8$ polynomials to be desirable:
 $$
 G=\{\pm(x_1-m_1),\pm(x_2-m_2),\pm(x_1^2-m_1^2-s_1^2),\pm(x_2^2-m_2^2-s_2^2)\}.
 $$
Since $ L(\pm(x_1-m_1))=\pm (y_{10}-m_1)$, $ L(\pm(x_2-m_2))=\pm (y_{01}-m_2)$, $ L(\pm (x_1^2-m_1^2-s_1^2))=\pm (y_{20}-m_1^2-s_1^2)$, $ L(\pm(x_2^2-m_2^2-s_2^2))=\pm (y_{02}-m_2^2-s_2^2)$ and given that
$y=[y_{10},y_{01},y_{20},y_{11},y_{02}]^\top $, we have that the  dual of the BADG $\bdomain$ is:
 \begin{equation}
 \bdomain^\bullet=\left\{y^\top \in \reals^{5}:y_{10}=m_1,y_{01}=m_2,~y_{20}=m_1^2+s_1^2,~y_{02}=m_2^2+s_2^2, ~ M_{2,1}(y)\geq0\right\}.
 \end{equation} 
Hence, it follows that
  $$
 M_{2,1}({y})=\begin{bmatrix}
  1 & y_{10} & y_{01}\\
  y_{10} & y_{20} & y_{11}\\
  y_{01} & y_{11} & y_{02}\\
 \end{bmatrix}=\begin{bmatrix}
  1 & m_1 & m_2\\
  m_1 & m_1^2+s_1^2 & y_{11}\\
  m_2 & y_{11} & m_2^2+s_2^2\\
 \end{bmatrix}.
 $$
 Assume that we aim at computing $\underline{E}^*[(x_1-m_1)(x_2-m_2)]$, $\overline{E}^*[(x_1-m_1)(x_2-m_2)]$, i.e., the lower/upper prevision
 of the gamble $(x_1-m_1)(x_2-m_2)$. Note that  $ L((x_1-m_1)(x_2-m_2))=y_{11}-m_1m_2$. From  $M_{2,1}(y)\geq0$ we can derive that
 $$
   - s_1 s_2 \leq y_{11}-m_1 m_2 \leq   s_1 s_2.
 $$
 These inequalities follow by  $det(M_{2,1}(y))\geq0$. From \eqref{eq:lowerpreboundeddual}, we thus have that $\underline{E}^*[(x_1-m_1)(x_2-m_2)]=- s_1 s_2$ and $\overline{E}^*[(x_1-m_1)(x_2-m_2)]=s_1 s_2$.
By interpreting $m_1,m_2,s_1^2,s_2^2$ as the means and variances of $x_1,x_2$ and observing that the above two inequalities can be written as
  $$
  (y_{11}-m_1 m_2)^2 \leq   s_1^2 s_2^2,
 $$
from Theorem \ref{th:badgasapprox} (this theorem holds because $\bdomain$ satisfies A.2) we can  derive that
\begin{equation}
\label{eq:CI}
E[(x_1-m_1)(x_2-m_2)]^2 \leq Var(x_1)^2\,Var(x_2)^2. 
\end{equation}
This is the well-known covariance inequality in probability theory. Observe that there exists a probability measure in BADG for which the above inequality is tight:
$\frac{1}{2}\delta_{\begin{psmallmatrix}m_1-s_1\\m_2-s_2\end{psmallmatrix}}(x)+\frac{1}{2}\delta_{\begin{psmallmatrix}m_1+s_1\\m_2+s_2\end{psmallmatrix}}(x)$,  here $\delta_{(a)}$ denotes an atomic measure (Dirac's delta) at $a$. It can in fact be verifed that this probability measure satisfies all the moment constraints:
$$
\begin{aligned}
 E[x_1]=m_1, ~E[x_2]=m_2,~E[x_1x_2]=m_1m_2+s_1^2s_2^2,~E[x_1^2]=m_1^2+s_1^2,E[x_2^2]=m_2^2+s_2^2.
\end{aligned}
$$
Hence, Theorem \ref{th:badgasapprox} is tight  in this case. However,  there are also signed measures that are compatible with Alice's assessments: 
$$
\displaystyle{\delta_{\begin{psmallmatrix}m_1-\tfrac{s_1}{\sqrt{2}}\\m_2-\tfrac{s_2}{\sqrt{2}}\end{psmallmatrix}}(x)-\delta_{\begin{psmallmatrix}m_1\\m_2\end{psmallmatrix}}(x)}+\delta_{\begin{psmallmatrix}m_1+\tfrac{s_1}{\sqrt{2}}\\m_2+\tfrac{s_1}{\sqrt{2}}\end{psmallmatrix}}(x),
$$
and that achieve the  equality in \eqref{eq:CI} but that  are not probabilities. 
 \end{example}


%

   \begin{example}
Consider the case $n=1,d=2$ and assume that 
$$
G=\{\pm(x_1-1),\pm(x_1^2-1)\}.
$$
Therefore, we have that  
$$
M_{1,2}(y)=L\left(\begin{bmatrix}
 1 & x_1 & x_1^2\\
 x_1 & x_1^2 & x_1^3\\
 x_1^2 & x_1^3 & x_1^4\\
\end{bmatrix}\right)=\begin{bmatrix}
 y_0 & y_1 & y_2\\
 y_1 & y_2 & y_3\\
 y_2 & y_3 & y_4\\
\end{bmatrix}=\begin{bmatrix}
 1 & 1 & 1\\
 1 & 1 & y_3\\
 1 & y_3 & y_4\\
\end{bmatrix}.
$$
Assume we are interested in computing the upper prevision $\overline{E}^*[x_1^4]$. From \eqref{eq:lowerpreboundeddual}, we have that
this upper prevision is equal to
\begin{equation}
 \label{eq:lowerpreboundeddualex}
\sup\limits_{{y_3,y_4} \in \reals^{2}} y_4~~~~s.t.~~M_{1,2}(y)\geq 0.
 \end{equation}
Note that the supremum is unbounded, since all matrices of the form
$$
M_{1,2}(y)=\begin{bmatrix}
 1 & 1 & 1\\
 1 & 1 & 1\\
 1 & 1 & y_4\\
\end{bmatrix}
$$
are positive semi-definite for every $y_4\geq1$.
$M_{1,2}(y)$  is  positive semi-definite, but it is not the truncated moment matrix of any probability measure. Note in fact that
$E[x_1]=E[x_1^2]=1$ would imply the probability measure to be atomic on $x_1=1$ and so  $\overline{E}[x_1^4]=1$ and, therefore, it 
cannot be that $E[x_1^4]>1$.  But (for instance for $y_4=2$) we can find an atomic 
 signed measure $1.014\delta_{1.043}+1.182\delta_{3.952}+0.004\delta_{-1.654}-0.920\delta_{3.938}-0.281\delta_{3.920}$ that
 has those moments, but it is not a probability measure (negative weights).
\end{example}
In the next section, we restrict $\Omega$ to avoid  unbounded previsions.


\section{BADG on semi-algebraic sets}
\label{sec:badgsemi}
Let us  assume that $\Omega$ is a semi-algebraic set, i.e., a  set described by polynomial inequalities
\begin{equation}
\label{eq:Omegaconstr}
 \Omega=\left\{x=[x_1,\dots,x_n]^\top \in \reals^n:  c_0(x)=1\geq0,~ c_j(x)\geq0, ~~j =1,\dots,|C| \right\},
\end{equation}
where $C=\{c_1,\dots, c_{|C|}\}$ with $c_j(x) \in \reals_{2n_{c_j}}[x]$ or $c_j(x) \in \reals_{2n_{c_j}-1}[x]$  (depending
if the polynomial has an even or odd degree).
That means that  Alice knows that $x$ belongs to the set $\Omega \subseteq \reals^n$. Note that when $C=\emptyset$, we have $\Omega=\reals^n$.
We have introduced  the redundant constraint $c_0(x)=1\geq0$ for convenience in the proofs to follow.

In ADG, the knowledge that $x$ belongs to $\Omega \subset \reals^n$  changes the cone of nonnegative gambles in $\reals^n$ from all gambles $g$ such that $g\geq0$
to all gambles $g$ such that $gI_{\Omega} \geq0$. In other words, the cone of nonnegative gambles is in this case:
\begin{equation}
\label{eq:gamblesom}
\gambles^+_{\Omega}=\{g:\reals^n\rightarrow \reals:~gI_{\Omega} \geq0\}. 
\end{equation}
Actually in ADG we do not need to change A.1 to take into account the information $x \in \Omega$, because we can define the cone of nonnegative gambles
directly in $\Omega$ 
\begin{equation}
\label{eq:gamblesom1}
\gambles^+=\{g:\Omega \rightarrow \reals:~g \geq0\}. 
\end{equation}
 To explain that, let us go back for a moment to the coin toss example  but considering the possibility space $\{Head,Tail,Side\}$.
 A gamble $g$ in this case has three components $g(Head)=g_1$,  $g(Tail)=g_2$ and $g(Side)=g_3$.
  If Alice is in a state of complete ignorance, according to A.1 she shall only accept gambles such that $g_i\geq0$ for $i=1,2,3$.
  Her set of desirable gambles is depicted in Figure \ref{fig:3d} (left),  that is the set of all nonnegative gambles in $\reals^3$.
  Assume she knows that the possibility space is actually $\Omega=\{Head,Tail\}$ ($Side$ is impossible),  according to A.1   she shall then accept all gambles
    $\{g=[g_1,g_2] \in \Reals^2:~ [g_1,g_2]\geq0 \}$ (this is the meaning of \eqref{eq:gamblesom1}), which are all the nonnegative gambles in $\reals^2$.
  Equivalently, according to \eqref{eq:gamblesom}, we can see this last cone as the 2D projection of the cone $\{g=[g_1,g_2,g_3] \in \Reals^3:~ gI_{\Omega}=[g_1,g_2,0]\geq0 \}$,
 which is   showed in Figure \ref{fig:3d} (right). Hence, in  $\reals^3$, the knowledge $\Omega=\{Head,Tail\}$  may be translated in a new definition
 of the cone of nonnegative gambles (Figure \ref{fig:3d} (right)), although this is not necessary in ADG.

  \begin{figure}
  \centering
   \includegraphics[width=5cm]{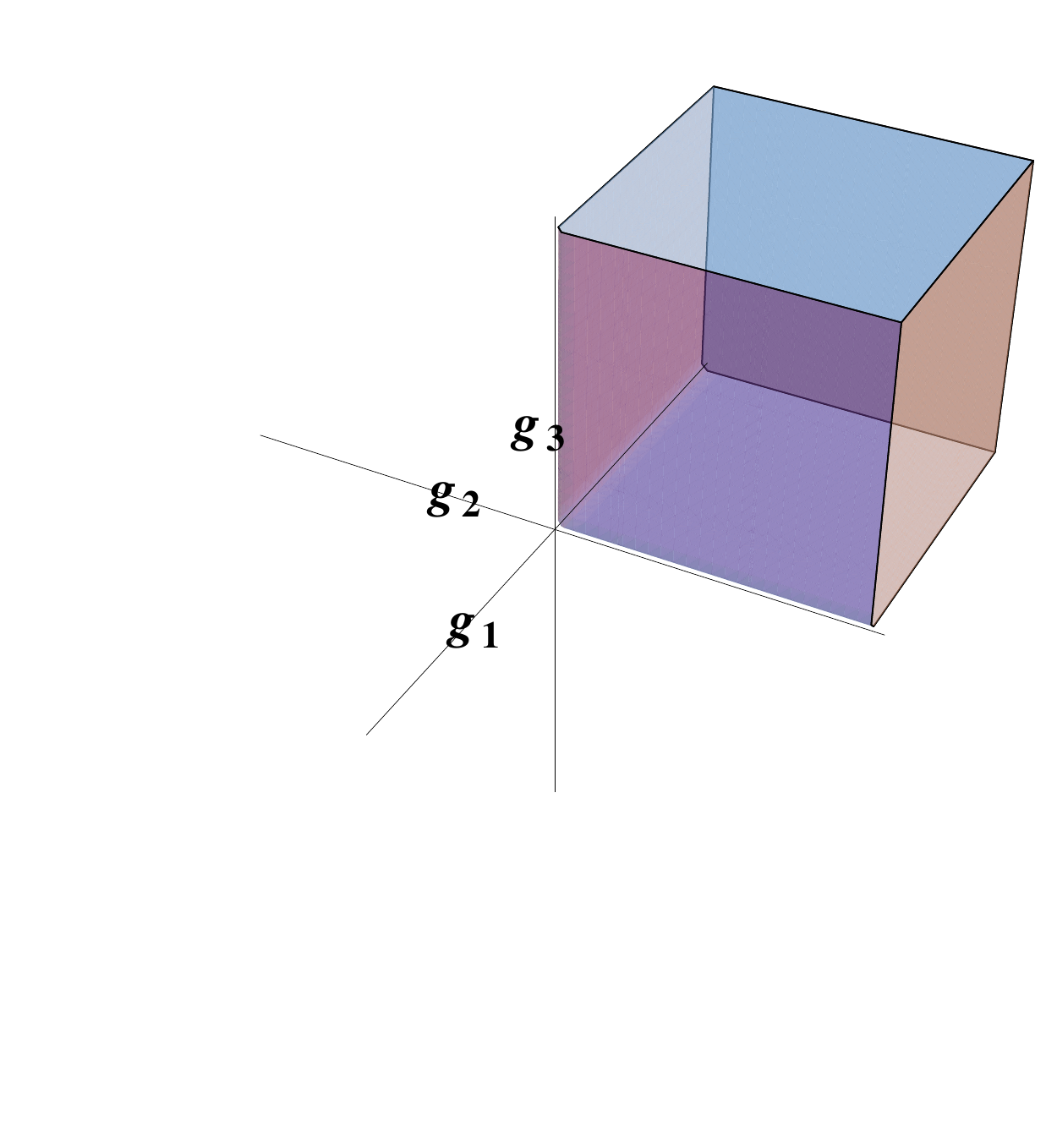} \includegraphics[width=6cm]{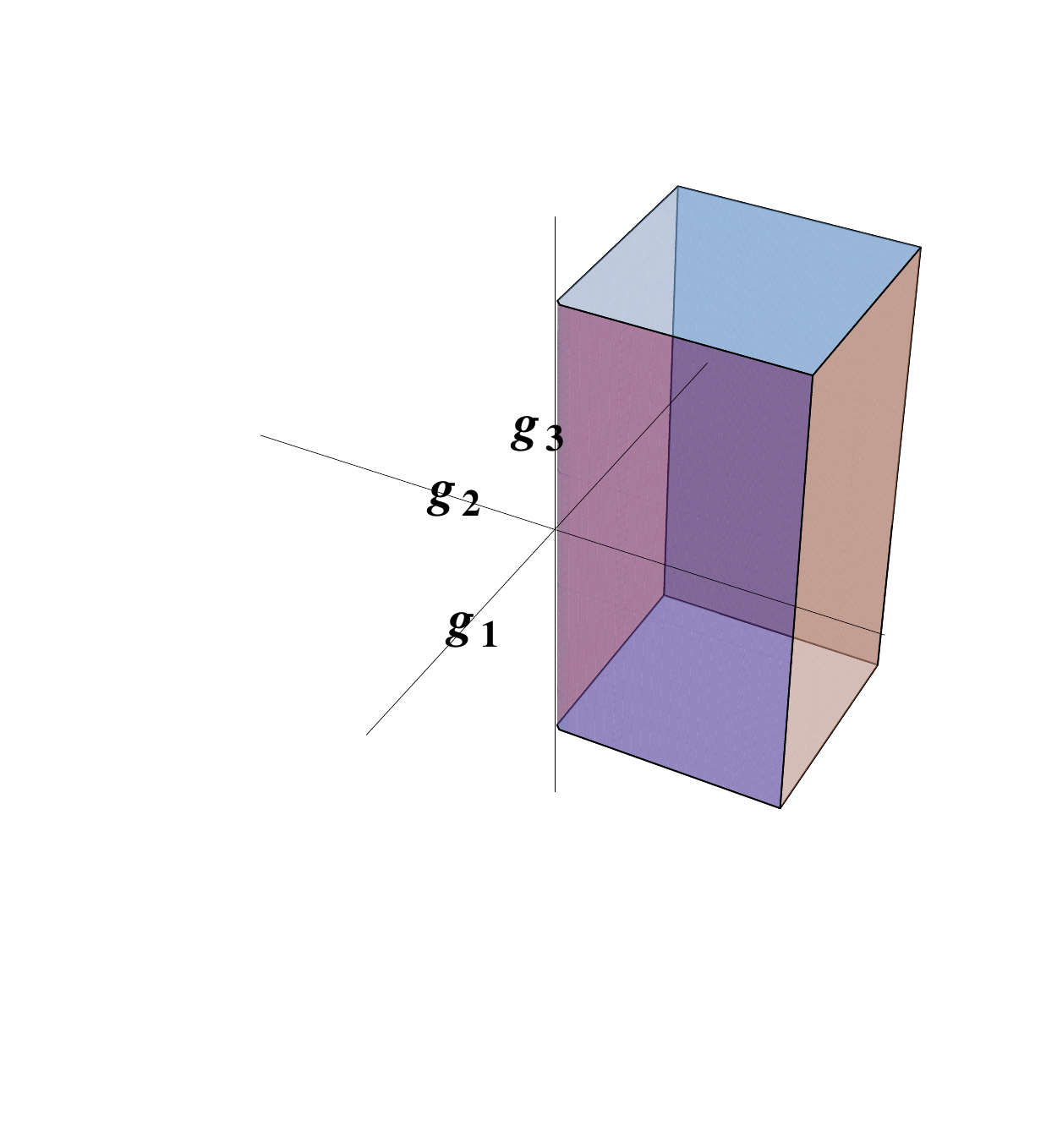}
   \caption{Cones of nonnegative gambles}
   \label{fig:3d}
  \end{figure}

In BADG, to express   the knowledge $x \in \Omega$, we cannot use \eqref{eq:gamblesom} because indicator functions are not polynomials. Similarly, we
cannot use \eqref{eq:gamblesom1}. The reason is that SOS are the computable nonnegative polynomials in $\Reals^n$ and if we restrict the domain to $\Omega$, then (in general)  we  do not know  an equivalent  class of computable nonnegative  polynomials in $\Omega$.
Hence, we need to find another way to model  $x \in \Omega$.

Let  us consider \eqref{eq:gamblesom} and notice that, for every nonnegative gamble $g:\reals^n\rightarrow \reals^+$, the gamble $g\,c_j$ is in $\gambles^+_{\Omega}$ for every $c_j$. Similarly, we have that  $g \, c_i\, c_j\in \gambles^+_{\Omega}$, $g\, c_i\, c_j\, c_k\, \in \gambles^+_{\Omega}$ and so on.
The set of gambles generated in this way forms a convex cone, 
$$
\tilde{\gambles}^+_{\Omega}:=\left\{h: h=\sum_{J \subseteq \{1,\dots,|C|\}} gc_{J}, ~~g \in \gambles^+\right\},
$$
with $c_{J}=\prod_{j \in J}c_j$, that is included in   $\gambles^+_{\Omega}$.

Since $ c_i$ are polynomials, so are $\sigma \, c_i\,$ and $\sigma \, c_i\, c_j$ and so on for any SOS $\sigma$. Moreover,
since $\sigma$ is SOS and so  nonnegative, we also know that $\sigma \, c_i\,$ ,  $\sigma\, c_i\, c_j$, $\sigma\, c_i\, c_j\, c_k,$ etc.,
are nonnegative in $\Omega$. This set forms a convex sub-cone of $\tilde{\gambles}^+_{\Omega}$, 
$$
\tilde{\tilde{\gambles}}^+_{\Omega}:=\left\{h: h=\sum_{J \subseteq \{1,\dots,|C|\}} \sigma_J c_{J}, ~~\sigma_J \in  \Sigma_{2d}\right\},
$$
and the nonnegativity of its elements can be efficiently evaluated (it reduces to verify that $\sigma_J$ is SOS) \citep{schmudgen1991thek}.


It is then natural in our theory of bounded rationality to translate the constraint $x \in \Omega$ in a computable sub-cone of the previous form.

%
%
%
%
 
We therefore give the following more general definition of BADG.

\begin{definition}
\label{def:badgOmega}
We say that  $\bdomain \subset \gambles_{2d}$ is a {\bf bounded-rationality}  coherent set of almost desirable gambles (BADG)
on the semi-algebraic set $\Omega$ in \eqref{eq:Omegaconstr},  when $d\geq \max_j{n_{c_j}}$ and $\bdomain$ satisfies A.3--A.5 (i.e. it is a closed convex cone) and:
\begin{description}
 \item[bA.1] If $ g \in \Xi_{2d} $ then $ g\in \bdomain$ (bounded accepting partial gain);
  \item[bA.2] If $ g \in \Xi^-_{2d} $ then $ g\notin \bdomain$ (bounded avoiding sure loss);
  \end{description}
 where  $\Xi_{2d}$ is defined as
 $$
 \begin{aligned}  
\Xi_{2d}&=\left\{\sigma_0 c_0 + \sum_{j=1}^{|C|} \sigma_j c_j: ~~\sigma_j\in\Sigma_{2d-2n_{c_j}}\right\}\\
 &=\left\{\sigma_0 + \sum_{j=1}^{|C|} \sigma_j c_j: ~~\sigma_0\in\Sigma_{2d},~\sigma_j\in\Sigma_{2d-2n_{c_j}}\right\}.
 \end{aligned}
$$
and $\Xi^-_{2d}$ is the interior of $-\Xi_{2d}$.
\end{definition}
%

Some remarks:
\begin{enumerate}
 \item  This is our bounded rationality approximation of  $\gambles^+_{\Omega}$. It can be noticed that the set $\Xi_{2d}$ does not include the terms 
 $\sigma \, c_i\, c_j\, c_k$ that are also nonnegative in $\Omega$. The number of these terms is  exponential in the number of polynomials that define the set $\Omega$  and, therefore, in general not suitable for computational complexity reasons. 
\item In Sections \ref{eq:putinar}, we will show that, under certain assumptions on $\Omega$, this  definition of BADG is not  conservative \citep{putinar1993positive}. 
\item Definition \ref{def:badgOmega} reduces to  Definition \ref{def:badg} when $C=\emptyset$ (so that $\Xi_{2d}=\Sigma_{2d}$).
 \item Results and Definitions in Section \ref{sec:badapprox} can be generalised accordingly
 by simply taking into account that the new set of nonnegative gambles is now $\Xi_{2d}$ 
 (before it was  $\Sigma_{2d}$).
\end{enumerate}
From now on we will use Definition \ref{def:badgOmega}  as definition of BADG. It means that Alice shall accept all polynomials of the form
$\sigma_0 + \sum_{j=1}^{|C|} \sigma_j c_j$ because they are nonnegative in $\Omega$. Again this is only a sufficient condition, since  in general there  exist 
nonnegative polynomials in $\Omega$ that  cannot be expressed as $\sigma_0 + \sum_{j=1}^{|C|} \sigma_j c_j$.

\begin{example}
 Let us consider the set 
 \begin{equation}
\label{eq:Omegaconstrex}
 \Omega=\left\{x \in \reals :  ~1-x\geq 0, ~x\geq 0 \right\},
\end{equation}
and let us assume that Alice is in a state of full ignorance. According to Definition \ref{def:badgOmega}, she shall only accept
gambles $f$ such that
$$
f=\sigma_0 + \sum_{j=1}^{|C|} \sigma_j c_j,
$$
for $\sigma_0\in\Sigma_{2d},~\sigma_j\in\Sigma_{2d-2n_{c_j}}$. Assume $f=2x + x^2$ and $d=1$, to prove that $f$ is always desirable in $\Omega$ we must  show that 
 $$
 f=2x + x^2=[1,x]Q_0[1,x]^T+ q_1 x +q_2 (1-x) ,
 $$
 with $Q_0,q_i\geq0$ ($Q_0$ is a matrix, $q_i$ are scalars).
 By equating the coefficients of the polynomials we  find the solution  $q_1=2$, $q_2=0$ and 
 $$
 Q_0=\begin{bmatrix}
     0 & 0\\
0 & 1
     \end{bmatrix}.
 $$
 Since $Q_0\geq0$ is positive semi-definite and $q_1,q_2\geq0$, this shows that $f$ is nonnegative in $\Omega$.
Instead, the polynomial  $f=\tfrac{1}{8}-x(1-x)$ cannot  be written as $[1,x]Q_0[1,x]^T+ q_1 x +q_2 (1-x) $ with $Q_0,q_i\geq0$.
This polynomial is negative  for $x=1/2$.
\end{example}

%

 \begin{example}[Markov's inequality again]
 \label{sec:markov2}
 In Example \ref{eq:markov1} we have shown how to derive Markov's inequality from ADG:
 \begin{equation}
 \label{eq:lowerpreMIagain}
\begin{array}{l}
\overline{E}(I_{\{[u,\infty)\}})=\inf\limits_{\lambda_i\in \mathbb{R}} ~~\lambda_0\\
  s.t.\\
 \lambda_0+ \lambda_{1}(x-m)  \geq I_{\{[u,\infty)\}}(x), ~~\forall x \in \Omega.\\
 \end{array}
\end{equation}
Note the presence of the indicator function that is not a polynomial.
However, the indicator is a piecewise polynomial and, therefore, the above problem can be rewritten as
 \begin{equation}
 \label{eq:lowerpreMIagain1}
\begin{array}{l}
\overline{E}(I_{\{[u,\infty)\}})=\inf\limits_{\lambda_i\in \mathbb{R}} ~~\lambda_0\\
  s.t.\\
 \lambda_0+ \lambda_{1}(x-m) -1 \geq 0, ~~\forall x \in [u,x_{max}],\\
  \lambda_0+ \lambda_{1}(x-m)  \geq 0, ~~\forall x \in [0,u).\\
 \end{array}
\end{equation}
Assume that $u \in [0,x_{max}]$, we can exploit the results of this section and rewrite the BADG formulation of the above problem as
$$
\begin{aligned}
& \inf_{\lambda_i\in \mathbb{R},\sigma_j} \lambda_0\\
 &s.t.\\
 &\lambda_0 +(x-m)\lambda_1 -1 = \sigma_0(x)+ \sigma_1(x)(x-u) + \sigma_2(x)(x_{max}-x),~~ \forall x\in \reals,\\
 & \lambda_0 +(x-m)\lambda_1 = \sigma_3(x)+ \sigma_4(x)x + \sigma_5(x)(u-x),~~ \forall x\in \reals,
\end{aligned}
$$
where $\sigma_i(x) \in \Sigma_{2(d-1)}$ for $i=1,2,4,5$ and $\sigma_i(x) \in \Sigma_{2d}$ for  $i=0,3$. 
It can be verified numerically that for $d\geq 2$ and $m<u$, the solution of the above problem is equal to $m/u$ and, therefore,
it coincides with that of ADG. For $m\geq u$ ($\lambda_0=1,\lambda_1=0,\sigma_j=0$), the infimum is $1$ same as ADG. 
\end{example}

\subsection{Duality}
We now extend the duality established in Section \ref{subsec:badg} to the case of BADG defined on semi-algebraic sets. As before, the first crucial step consists in establishing the following result.

\begin{proposition}
\label{prop:dualSOS2}
Let $\bdomain=\Xi_{2d}$. Then its dual is
\begin{equation}
\label{eq:dualM}
\bdomain^\bullet=\left\{{y} \in \reals^{{s_n(2d)}}:   ~M_{n,d}({y})\geq0,~M_{n,d-n_{c_j}}(c_j\,{y})\geq0, \forall c_j\in C\right\},
\end{equation} 
where $M_{n,r}(c\,{y}):=L(c(x_1,\dots,x_n)v_r(x_1,\dots,x_n) v_r(x_1,\dots,x_n)^\top)$.
\end{proposition}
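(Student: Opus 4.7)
My plan is to mimic the structure of the proof of Proposition \ref{prop:igno}, extending its trace-based argument by absorbing the constraint polynomials $c_j$ into the ``weight matrix'' of the linear operator.

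First, I would exploit the fact that $\Xi_{2d}$ is a convex cone generated by the $|C|+1$ sub-cones $\Sigma_{2d}$ and $c_j \cdot \Sigma_{2d-2n_{c_j}}$ for $j=1,\dots,|C|$. Since setting all but one of the $\sigma_j$'s to zero is allowed, the condition $L(g)\ge 0$ for every $g\in \Xi_{2d}$ is, by linearity of $L$, equivalent to the conjunction of the conditions $L(\sigma_0)\ge 0$ for every $\sigma_0 \in \Sigma_{2d}$ and $L(\sigma_j c_j)\ge 0$ for every $\sigma_j \in \Sigma_{2d-2n_{c_j}}$ and every $j\in\{1,\dots,|C|\}$.

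The first of these conditions is exactly what Proposition \ref{prop:igno} handles and yields $M_{n,d}(y)\ge 0$. For each of the remaining conditions, I would replicate the three-step derivation of that proof, but now with the basis $v_{d-n_{c_j}}$ and with $c_j$ pulled into the trace. Concretely, if $\sigma_j = v_{d-n_{c_j}}^\top Q_j v_{d-n_{c_j}}$ with $Q_j \succeq 0$, then using Lemma \ref{fact:TR} and the linearity of $L$ and trace,
\begin{align*}
L(\sigma_j c_j)
&= L\!\left( c_j\, v_{d-n_{c_j}}^\top Q_j\, v_{d-n_{c_j}}\right) \\
&= L\!\left( \mathrm{Tr}\!\left(Q_j\, c_j\, v_{d-n_{c_j}} v_{d-n_{c_j}}^\top\right)\right) \\
&= \mathrm{Tr}\!\left(Q_j\, L\!\left(c_j\, v_{d-n_{c_j}} v_{d-n_{c_j}}^\top\right)\right) \\
&= \mathrm{Tr}\!\left(Q_j\, M_{n,d-n_{c_j}}(c_j\, y)\right).
\end{align*}
By Fej\'er's trace theorem, the last expression is nonnegative for every $Q_j\succeq 0$ if and only if $M_{n,d-n_{c_j}}(c_j\, y)\succeq 0$. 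Ranging over all $\sigma_j\in\Sigma_{2d-2n_{c_j}}$ thus amounts to ranging over all $Q_j\succeq 0$, so the second family of conditions becomes $M_{n,d-n_{c_j}}(c_j\, y)\succeq 0$ for every $c_j \in C$.

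The only delicate point I anticipate is the bookkeeping on degrees: the definition of $\Xi_{2d}$ requires $\sigma_j \in \Sigma_{2d-2n_{c_j}}$ precisely so that $\sigma_j c_j \in \gambles_{2d}$ and the moment matrix $M_{n,d-n_{c_j}}(c_j\, y)$ only involves entries $y_\alpha$ with $|\alpha|\le 2d$, which is guaranteed by the standing assumption $d \geq \max_j n_{c_j}$. Once this is in place, combining the two halves gives the characterisation in \eqref{eq:dualM}, and the converse follows automatically because $\Xi_{2d}$ is a closed convex cone and $(\cdot)^\bullet{}^\bullet = (\cdot)$ on such cones, analogously to the reasoning in Proposition \ref{prop:igno}.
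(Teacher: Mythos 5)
Your proof is correct and follows essentially the same route as the paper's: decompose $\Xi_{2d}$ into the generating sub-cones $c_j\cdot\Sigma_{2d-2n_{c_j}}$ (with $c_0=1$), pull each $c_j$ into the trace via Lemma \ref{fact:TR}, and apply Fej\'er's trace theorem to obtain positive semi-definiteness of the moment and localizing matrices. The only differences are cosmetic: you make the two-way equivalence and the degree bookkeeping more explicit, and your closing appeal to biduality is unnecessary since the chain of equivalences already yields both inclusions.
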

\begin{proof}
The proof is structurally the same as the one for Proposition \ref{prop:igno}. The inclusion from right to left being easy, for the other inclusion we reason as follows. First of all, notice that 
elements of $\Xi_{2d}$ are combinations of polynomials of the form $\sigma_j(x_1,\dots,x_n) c_j(x_1,\dots,x_n)$ with $\sigma_j \in \Sigma_{2(d-n_{c_j})}$,  
Any  $\sigma_j \in \Sigma_{2(d-n_{c_j})}$ can be written as $ {v}_{d-n_{c_j}}(x_1,\dots,x_n)^\top {Q} {v}_{d-n_{c_j}}(x_1,\dots,x_n)$ {(see Eq. \eqref{eqn:SOSrepr}).} 
From Equation \eqref{eq:matrixTR},
 $ c_j(x_1,\dots,x_n) {v}_{d-n_{c_j}}(x_1,\dots,x_n)^\top {Q} {v}_{d-n_{c_j}}(x_1,\dots,x_n)$ is equal to  $Tr(Q c(x_1,\dots,x_n)  {v}_{d-n_{c_j}}(x_1,\dots,x_n){v}_{d-n_{c_j}}(x_1,\dots,x_n)^\top)$ 
 with ${Q}\geq0$. Because of linearity of $L$ and trace
\begin{align}
\nonumber
&L( Tr( {Q}\, c(x_1,\dots,x_n) {v}_{d-n_{c_j}}(x_1,\dots,x_n){v}_{d-n_{c_j}}(x_1,\dots,x_n)^\top)) \\
\nonumber
&= Tr( {Q}\, L(c(x_1,\dots,x_n) {v}_{d-n_{c_j}}(x_1,\dots,x_n){v}_{d-n_{c_j}}(x_1,\dots,x_n)^\top))\\
\nonumber
&=Tr({Q}\, M_{n,d-n_{c_j}}(c_j\,{y})),
\end{align}
 where $M_{n,d-n_{c_j}}(c_j\,{y})=L(c(x_1,\dots,x_n)\,{v}_d(x_1,\dots,x_n){v}_d(x_1,\dots,x_n)^\top)$.
  This means that $ Tr(Q\, M_{n,d-n_{c_j}}(c_j\,{y}))\geq0 ~~\forall Q\geq0$, and therefore $M_{n,d-n_{c_j}}(c_j\,{y})\geq0$ for every $c_j$.
  We conclude by considering that $c_0(x)=1$.
\end{proof}
The matrix $M_{n,r}(c\,{y})$ is called localizing matrix by \citet{lasserre2009moments}. 

As an immediate consequence of Proposition \ref{prop:dualSOS2} and the properties of $(\cdot)^\bullet$, it is then possible to verify an analogous of Proposition \ref{prop:Bczesc}. Since Proposition \ref{prop:Bczesc2} also holds for  BADG defined on semi-algebraic sets, by reasoning exactly as in Theorem \ref{th:dualSOS}, we therefore can prove the following.
\begin{theorem}
\label{th:dualSOSlocaliz}
The map
\[ \mathcal{C} \mapsto \mathcal{C}^\bullet\cap \mathscr{S}\]
is a bijection between BADGs  in  the semi-algebraic set $\Omega$  and closed convex subsets of $\mathscr{S}$. We can therefore identify the dual of 
a BADG $\bdomain$ in  the semi-algebraic set $\Omega$ with
\begin{equation}
\label{eq:dualM2}
\bdomain^\bullet=\left\{{y} \in \reals^{{s_n(2d)}}:  L(g)\geq0, ~ L(1)=1, ~M_{n,d-n_{c_j}}(c_j\,{y})\geq0, \forall c_j\in C,~ M_{n,d}({y})\geq0, ~\forall g \in \bdomain\right\},
\end{equation} 
where $L(g)$ is completely determined by ${y}$ via the definition \eqref{eq:linearoperator}.
\end{theorem}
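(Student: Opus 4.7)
The plan is to adapt the proof of Theorem~\ref{th:dualSOS} \emph{verbatim}, replacing the cone $\Sigma_{2d}$ by $\Xi_{2d}$ and using Proposition~\ref{prop:dualSOS2} in place of Proposition~\ref{prop:igno}. The only genuine work is to update the two propositions feeding into that proof; once they are in place the bijection argument transfers word-for-word.

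First I would establish an analogue of Proposition~\ref{prop:Bczesc} for the semi-algebraic setting. By Proposition~\ref{prop:dualSOS2}, $(\Xi_{2d})^\bullet$ is exactly the set
\[
\ydomain^+_\Omega := \left\{ y \in \reals^{s_n(2d)} : M_{n,d}(y)\geq 0,\ M_{n,d-n_{c_j}}(c_j\, y)\geq 0\ \forall c_j\in C\right\},
\]
and since $\Xi_{2d}$ is a closed convex cone (each summand $\sigma_j c_j$ belongs to the product of the closed cone $\Sigma_{2d-2n_{c_j}}$ with a fixed polynomial, and $\Xi_{2d}$ is the Minkowski sum of finitely many such cones), the biduality $(\Xi_{2d})^{\bullet\bullet}=\Xi_{2d}$ gives the reverse identity $(\ydomain^+_\Omega)^\bullet=\Xi_{2d}$. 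The identities $(\gambles_{2d})^\bullet=\{0\}$ and $\{0\}^\bullet=\gambles_{2d}$ carry over unchanged, because they depend only on the vector-space structure of $\gambles_{2d}$.

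Next I would verify the analogue of Proposition~\ref{prop:Bczesc2} with $\Sigma_{2d}$ replaced by $\Xi_{2d}$ and $\mathcal{Y}^+$ replaced by $\ydomain^+_\Omega$. The equivalence (2)$\Leftrightarrow$(3) is immediate from antimonotonicity of $(\cdot)^\bullet$ combined with the updated Proposition~\ref{prop:Bczesc}. For (1)$\Rightarrow$(2), bA.1 gives $\bdomain\supseteq\Xi_{2d}$ and bA.2 supplies a witness $g\in\Xi^-_{2d}$ (a strictly negative element of $-\Xi_{2d}$) with $g\notin\bdomain$, forcing $\bdomain\neq\gambles_{2d}$. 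For (2)$\Rightarrow$(1): containment of $\Xi_{2d}$ is bA.1, and if bA.2 failed there would exist $g\in\bdomain$ with $-g\in\Xi_{2d}$ and $\sup g<0$; for any $y\in\ydomain^+_\Omega$, nonnegativity of $L(-g)$ combined with $L(g)\geq 0$ would force $L(g)=0$, and the strict negativity of $g$ (plugging in a Dirac-type evaluation functional in $\ydomain^+_\Omega$, which lies in the cone since every $c_j$ is nonnegative on $\Omega$) forces $y=0$, hence $\bdomain^\bullet=\{0\}$ and $\bdomain=\gambles_{2d}$, contradicting (2).

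With those two ingredients, the proof of the theorem itself is now a transcription of Theorem~\ref{th:dualSOS}: for a coherent $\bdomain$, the updated Proposition~\ref{prop:Bczesc2} says $\bdomain^\bullet$ is a closed convex cone contained in $\ydomain^+_\Omega$ and distinct from $\{0\}$, so intersecting with $\mathscr{S}$ yields a nonempty closed convex set $\mathcal{P}$; injectivity follows from $\mathbb{R}_+\mathcal{P}=\bdomain^\bullet$ and $\bdomain=(\mathbb{R}_+\mathcal{P})^\bullet$; surjectivity is obtained by running the argument in reverse. The explicit description \eqref{eq:dualM2} is then just the expansion of $\bdomain^\bullet\cap\mathscr{S}$ using Proposition~\ref{prop:dualSOS2} and the normalisation $L(1)=1$ defining $\mathscr{S}$.

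The main obstacle I anticipate is the (1)$\Rightarrow$(3) direction of the updated Proposition~\ref{prop:Bczesc2}: producing a separating $y\in\ydomain^+_\Omega$ that witnesses $L(g)<0$ for a strictly negative element of $-\Xi_{2d}$. In the non-constrained case any point evaluation functional works; here I need to check that evaluations at points of $\Omega$ (where all $c_j\geq 0$) give rise to localizing matrices $M_{n,d-n_{c_j}}(c_j y)$ that are positive semidefinite, which is straightforward since these reduce to $c_j(\tilde x)\,v_{d-n_{c_j}}(\tilde x)v_{d-n_{c_j}}(\tilde x)^\top$ with $c_j(\tilde x)\geq 0$. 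Once this is in hand, the bijection follows without further subtlety.
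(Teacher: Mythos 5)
Your proposal follows exactly the route the paper takes: the paper proves this theorem by observing that Proposition~\ref{prop:dualSOS2} yields the semi-algebraic analogue of Proposition~\ref{prop:Bczesc}, that Proposition~\ref{prop:Bczesc2} then carries over with $\Sigma_{2d}$ replaced by $\Xi_{2d}$, and that the bijection argument of Theorem~\ref{th:dualSOS} transfers verbatim --- which is precisely your plan. Your write-up in fact supplies more detail than the paper (which leaves these steps as a remark), including the useful check that evaluation functionals at points $\tilde x \in \Omega$ produce localizing matrices $c_j(\tilde x)\,v_{d-n_{c_j}}(\tilde x)v_{d-n_{c_j}}(\tilde x)^\top \geq 0$ and hence belong to $(\Xi_{2d})^\bullet$.
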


To understand the above dual set, let us consider again the following example.

\begin{example}
 Let us consider the set 
 \begin{equation}
\label{eq:Omegaconstrola}
 \Omega=\left\{x \in \reals :  ~1-x\geq 0, ~x\geq 0 \right\}.
\end{equation}
Assume that Alice is in a state of complete ignorance and that  $d=1$. Then the dual \eqref{eq:dualM2} is

\begin{align}
 &\bdomain^\bullet=\Big\{{y_0,y_1,y_2,y_3}:\\
 &y_0=1, ~ M_{1,1}(y)=\begin{bmatrix}
             y_0 & y_1\\y_1 & y_2
            \end{bmatrix}\geq0,~~M_{1,0}(c_1 y)=y_1\geq0,,~~M_{1,0}(c_2 y)=1-y_1\geq0\Big\}.
\end{align}

By interpreting $M_{1,0}(c_1 y),M_{1,0}(c_2 y)$ as truncated moment matrices, we can see
$$
M_{1,0}(c_1 y)= L(x)=\int x d\mu\geq0 , ~~M_{1,0}(c_2 y)= L(1-x)=\int (1-x) d\mu\geq0.
$$
Hence, the assessment $x\in \Omega=[0,1]$ has been relaxed in BADG to 
$E[x]\in[0,1]$.
\end{example}

\subsection{Convergence of BADG to ADG}
\label{eq:putinar}
If we consider Theorem \ref{th:Bnoneg} then we can notice that, for fixed $G$, the set $\text{posi}(G \cup \Sigma_{2d})$ depends
on the degree $d$ of the SOS polynomials $\Sigma_{2d}$.
 By increasing $d$ we add more nonnegative gambles and, therefore,
enlarge the cone $\bdomain$.  We can then ask: what happens if we increase $d\rightarrow \infty$?

Let us assume that the semi-algebraic set $\Omega$ in \eqref{eq:Omegaconstr} is compact.
The compactness implies that polynomial gambles defined on $\Omega$ are now bounded.

\begin{proposition}[\citealt{schmudgen1991thek}]
\label{prop:schmudgen1991thek}
Let $\Omega$ be as  in \eqref{eq:Omegaconstr} and compact. If $f$ is strictly positive on $\Omega$ then  
there exist $\sigma_j\in\Sigma[x_1,\dots,x_n]$ such that
\begin{equation}
f=\sum\limits_{J \subseteq \{1,\dots,|C|\}} \sigma_{J}\, c_J,
\end{equation}
where $c_J=\prod_{j\in J} c_j$.
\end{proposition}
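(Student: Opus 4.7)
The plan is to prove this by combining the abstract Krivine--Stengle Positivstellensatz with the \emph{archimedean} property of the preordering $T:=\set{\sum_{J\subseteq\{1,\dots,|C|\}}\sigma_J c_J}{\sigma_J\in\Sigma[x_1,\dots,x_n]}$ that compactness forces on $\Omega$. The goal is to show $f\in T$. Note that by compactness and strict positivity on $\Omega$ we may assume $f\geq\varepsilon$ for some $\varepsilon>0$ on $\Omega$.

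First I would invoke the abstract Positivstellensatz of Krivine and Stengle: since $f>0$ on the semi-algebraic set $\Omega=\set{x}{c_j(x)\geq 0}$, there exist $s,t\in T$ with $sf=1+t$. This already represents $f$ on $\Omega$ as a quotient of elements of $T$, but we must remove the denominator $s$.

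Next, I would establish the archimedean property of $T$: there exists $R>0$ such that $R-\sum_i x_i^2$ lies in $T$, and more generally, for every $p\in\reals[x_1,\dots,x_n]$ there is $N\in\N$ with $N\pm p\in T$. The bound $R-\sum_i x_i^2>0$ on $\Omega$ follows directly from compactness of $\Omega$; the fact that it actually admits a representation in $T$ is the substantive input and can be deduced either from a direct SOS-certificate construction (using the explicit polynomial inequalities defining $\Omega$) or, in a self-contained development, from an induction on degree showing that each monomial admits an upper bound of the form $N-\text{(element of }T)$. Archimedeanity extends from $\sum x_i^2$ to arbitrary polynomials by standard arithmetic on $T$ (sums of squares of shifted polynomials, and using $4pq=(p+q)^2-(p-q)^2$).

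The main obstacle is the step that removes the denominator $s$, and here I would argue by contradiction using Hahn--Banach plus a Kadison--Dubois-type representation. Suppose $f\notin T$. Since $T$ is a convex cone in the finite-dimensional vector space of polynomials up to some degree (with $T$ closed, or after passing to the closure since archimedeanity forces closedness in the finest locally convex topology), Hahn--Banach separation yields a nonzero linear functional $L\colon \reals[x_1,\dots,x_n]\to\reals$ with $L(q)\geq 0$ for every $q\in T$ and $L(f)\leq 0$. Normalizing so that $L(1)=1$, the archimedean property forces $L$ to be a \emph{state} on the algebra of polynomials that is continuous with respect to the seminorm induced by $T$. By the Kadison--Dubois (or Stone) representation theorem, such a state is given by integration against a probability measure supported on the set of characters $\chi$ with $\chi(c_j)\geq 0$, i.e.\ on $\Omega$. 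Therefore $L(f)=\int_\Omega f\,d\mu\geq \varepsilon>0$, contradicting $L(f)\leq 0$. This concludes that $f\in T$, which is exactly the claimed representation. The delicate points are the archimedean step and the identification of the separating functional with a measure on $\Omega$; the first is pure algebra driven by the explicit compactness bound, while the second is where the compactness of $\Omega$ is truly essential, ensuring the spectrum of characters reduces to the geometric points of $\Omega$.
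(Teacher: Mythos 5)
The paper offers no proof of this proposition: it is quoted as Schm\"udgen's Positivstellensatz with a bare citation to the 1991 paper, so there is no in-paper argument to compare against. Your overall architecture --- Krivine--Stengle, then archimedeanity of the preordering $T$, then Hahn--Banach (Eidelheit) separation plus a Kadison--Dubois-type representation --- is the standard W\"ormann/Berr--W\"ormann route to this theorem, and it is sound in outline; it is genuinely different from Schm\"udgen's original operator-theoretic proof via the multidimensional moment problem and the spectral theorem.

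There is, however, a concrete gap at the step you dismiss as ``pure algebra driven by the explicit compactness bound.'' Establishing that $T$ is archimedean, i.e.\ that $N-\sum_i x_i^2\in T$ for some $N$, is the mathematical heart of the theorem, and neither of your two suggested justifications works as stated. A ``direct SOS-certificate construction using the explicit polynomial inequalities defining $\Omega$'' is not available for a general compact basic closed semi-algebraic set --- if it were, the archimedean property would be a triviality rather than the celebrated core of the result. The correct route is to apply the Krivine--Stengle Positivstellensatz to the polynomial $R-\sum_i x_i^2$, which is strictly positive on $\Omega$ by compactness, obtaining $p\,\bigl(R-\sum_i x_i^2\bigr)=1+q$ with $p,q\in T$, and then to run W\"ormann's denominator-removal lemma (the degree induction you allude to has exactly this identity as its seed; it does not start from the compactness bound alone). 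As written, your single invocation of the Positivstellensatz is instead applied to $f$, yielding $sf=1+t$ --- an identity that is never used again, since your separation/representation argument concludes $f\in T$ directly from archimedeanity and $f\geq\varepsilon$ on $\Omega$. So the proposal applies the Positivstellensatz to the wrong polynomial and leaves the genuinely hard algebraic lemma unproved; secondarily, the identification of the normalized separating functional with a measure supported on $\Omega$ is the full content of the Kadison--Dubois representation theorem, which you may cite but which carries most of the remaining weight.
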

Since $\sigma_J$ is SOS and so  nonnegative, we  know that $\sigma_J \, c_i\,$ ,  $\sigma_J\, c_i\, c_j$, $\sigma_J\, c_i\, c_j\, c_k$ etc.
are nonnegative in $\Omega$. This set is the convex sub-cone of $\tilde{\gambles}^+_{\Omega}$ we introduced  previously in Section \ref{sec:badgsemi}.
\cite{schmudgen1991thek} proved that any strictly non-negative polynomial $f$ on $\Omega$ can be written as $\sum\limits_{J \subseteq \{1,\dots,|C|\}} \sigma_{J}\, c_J$ for some 
SOS $\sigma_{J}$. 
The problem with this result is that the sum on the right hand side has an exponential number of terms. By imposing a further assumption on $\Omega$, we can make a major simplification.

We first define the following convex cone generated by the family of polynomials $c_j(x)$ \cite[Sec 2.5]{lasserre2009moments}:
$$
\Xi=\left\{\sigma_0 + \sum_{j=1}^{|C|} \sigma_j c_j: ~~\sigma_j\in\Sigma[x_1,\dots,x_n]\right\},
$$
where this time we are not restricting the degree of the SOS polynomials.

\begin{proposition}[\citealt{putinar1993positive}]
\label{prop:Putinar}
Assume that there exists a polynomial $u \in \Xi$ such that the level set $\{x: \reals^n:~u(x)\geq0\}$ is compact.
Let $\Omega$ be as  in \eqref{eq:Omegaconstr}. If $f$ is a strictly positive polynomial in $\Omega$ then $f\in \Xi$, i.e., 
there exist $\sigma_j\in\Sigma[x_1,\dots,x_n]$ such that
\begin{equation}
f=\sigma_0 + \sum_{j=1}^{|C|} \sigma_j c_j.
\end{equation}
\end{proposition}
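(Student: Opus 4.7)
The plan is to prove Putinar's theorem by a Hahn--Banach separation argument coupled with a representation theorem. Suppose, for contradiction, that $f$ is strictly positive on $\Omega$ but $f\notin\Xi$. Since $\Xi$ is a convex cone in the vector space $\reals[x_1,\dots,x_n]$ and (thanks to the quadratic-module structure) one can verify it is closed in an appropriate topology, the Hahn--Banach theorem furnishes a linear functional $L\colon\reals[x_1,\dots,x_n]\to\reals$ satisfying $L(f)\leq 0$ and $L(g)\geq 0$ for every $g\in\Xi$. The whole game is then to show that such an $L$ is represented by a genuine Borel probability measure supported on $\Omega$; once that is done, $L(f)=\int f\,d\mu>0$ would contradict $L(f)\leq 0$.

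The pivotal ingredient making the representation possible is the \emph{Archimedean property} of $\Xi$. I would first show that the assumption $u\in\Xi$ with $\{u\geq 0\}$ compact upgrades to: there exists $N\in\N$ such that $N-\sum_i x_i^2\in\Xi$. Indeed, on the compact set $\{u\geq 0\}$ the polynomial $N-\sum_i x_i^2$ is strictly positive for $N$ large, and a bootstrap using powers of $u$ and SOS multipliers yields membership in $\Xi$. From this one deduces, by repeated squaring and scaling, that for every polynomial $p$ there is $N_p$ with $N_p\pm p\in\Xi$, hence $|L(p)|\leq N_p\,L(1)$. After normalising $L(1)=1$ (which is legitimate because $L(1)>0$: otherwise the Archimedean bound forces $L\equiv 0$), we obtain that $L$ extends by continuity, via the Stone--Weierstrass density of polynomials in $C(K)$ for any compact $K\supseteq\{u\geq 0\}$, to a bounded positive linear functional on $C(K)$.

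Applying the Riesz--Markov representation theorem, this extension is given by integration against a Borel probability measure $\mu$ on $K$. The next step is to show $\mathrm{supp}(\mu)\subseteq\Omega$. For each constraint $c_j$, the condition $L(\sigma c_j)\geq 0$ for every $\sigma\in\Sigma[x_1,\dots,x_n]$ translates into $\int\sigma\,c_j\,d\mu\geq 0$. If $\mu$ charged some open set where $c_j<0$, one could construct, via Stone--Weierstrass, a sequence of SOS polynomials approximating the indicator of that set; passing to the limit would give a contradiction. Therefore $c_j\geq 0$ $\mu$-almost everywhere for each $j$, so $\mu$ is supported on $\Omega$. Combining these facts, $L(f)=\int_\Omega f\,d\mu>0$ since $f>0$ on the compact set $\Omega$ and $\mu$ is a probability measure, contradicting $L(f)\leq 0$.

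The main obstacle I expect is the passage from the algebraic statement $L\geq 0$ on $\Xi$ to the analytic representation of $L$ by a measure supported on $\Omega$; this is precisely where the Archimedean property of $\Xi$ is indispensable, both to guarantee continuity of $L$ (so that Riesz applies) and to localise the support of $\mu$ inside $\Omega$. Without the hypothesis on $u$, the Archimedean step fails and one only obtains the weaker Schm\"udgen-type representation of Proposition~\ref{prop:schmudgen1991thek}, which requires the full exponential family of products $c_J$.
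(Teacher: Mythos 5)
The paper does not prove this proposition: it is imported verbatim as Putinar's Positivstellensatz with a citation to \citet{putinar1993positive}, so there is no in-paper argument to compare yours against. Judged on its own terms, your sketch follows the standard functional-analytic architecture (separation, Archimedean property, measure representation, support localisation), which is indeed how the theorem is proved; but two of your steps contain genuine gaps.

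First, the separation step. Quadratic modules such as $\Xi$ are in general \emph{not} closed, and closedness is not what licenses the separation here. The correct mechanism is that the Archimedean property --- which you only establish afterwards --- makes $1$ an algebraic interior point of $\Xi$ in the finest locally convex topology, so the Eidelheit separation theorem applies to the pair $(f,\Xi)$ with no closedness hypothesis; you must therefore prove Archimedeanity \emph{before} invoking Hahn--Banach, not after. (Your Archimedean bootstrap itself, upgrading compactness of $\{u\geq0\}$ to $N-\sum_i x_i^2\in\Xi$, is a nontrivial lemma of W\"ormann/Schm\"udgen, usually obtained by applying Proposition~\ref{prop:schmudgen1991thek} to the single-constraint set $\{u\geq0\}$; it deserves more than a clause.) Second, and more seriously, the representation step is circular as written. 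To extend $L$ to $C(K)$ by Stone--Weierstrass density you need $|L(p)|\leq \|p\|_{C(K)}$, but the Archimedean property only yields $|L(p)|\leq N_p$ with $N_p=\inf\{N: N\pm p\in\Xi\}$, and the missing inequality $N_p\leq\|p\|_{C(\Omega)}+\epsilon$ is exactly Putinar's theorem applied to the strictly positive polynomial $\|p\|_{C(\Omega)}+\epsilon\pm p$. The non-circular route is the GNS construction: build a Hilbert space from the form $(p,q)\mapsto L(pq)$, show the commuting multiplication operators $X_i$ are bounded and self-adjoint because $N-\sum_i x_i^2\in\Xi$, and apply the joint spectral theorem to produce the measure $\mu$; your support localisation via $L(\sigma c_j)\geq0$ (with $\sigma=p^2$ and $p$ an $L^2(\mu)$-approximation of an indicator) then goes through, and the final contradiction $0\geq L(f)=\int f\,d\mu>0$ is as you state.
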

This is a very general and powerful proposition and shows that our Definition  \ref{def:badgOmega} of BADG in $\Omega$ is not restrictive: for any 
strictly positive polynomial $f$ on $\Omega$ there exist SOS polynomials such that $f=\sigma_0 + \sum_{j=1}^{|C|} \sigma_j c_j$.
Note that, since $\Omega$ is compact, if we know a scalar $d>0$ such that $\Omega \subset \{x: \reals^n:~||x||\leq d\}$ then we can
add the constraint $||x||\leq d$ to $\Omega$ without changing $\Omega$. With this new representation, $\Xi$ satisfies the assumption
in Proposition \ref{prop:Putinar} \cite[Sec 2.5]{lasserre2009moments}.

\begin{proposition}
\label{prop:convergence}
Given the set $G$ of gambles Alice finds  desirable, a semi-algebraic set $\Omega$ satisfying the assumption in Proposition \ref{prop:Putinar}.
 Assume that $\domain$ avoids sure loss. Then for every polynomial $f$,   BADG  converges to ADG for $d \rightarrow \infty$  in the sense that when $\underline{E}^*(f)$ is finite then
 $\underline{E}^*(f) \rightarrow  \underline{E}(f)$ (from below) \cite[Th. 4.1]{lasserre2009moments}.
\end{proposition}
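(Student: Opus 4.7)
My strategy is a squeeze between the upper bound inherited from Theorem~\ref{th:badgasapprox} and a lower bound produced by Putinar's Positivstellensatz (Proposition~\ref{prop:Putinar}).

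Write $\underline{E}^*_d(f)$ for the BADG lower prevision at relaxation degree $2d$: the supremum of $\lambda_0\in\reals$ subject to $f-\lambda_0-\sum_{j=1}^{|G|}\lambda_j g_j\in\Xi_{2d}$ with $\lambda_j\geq 0$. Since any SOS decomposition of degree $\leq 2d$ is also an SOS decomposition of degree $\leq 2(d+1)$, one has $\Sigma_{2d}\subseteq \Sigma_{2(d+1)}$ and hence $\Xi_{2d}\subseteq \Xi_{2(d+1)}$, so $d\mapsto \underline{E}^*_d(f)$ is non-decreasing. Moreover, the argument of Theorem~\ref{th:badgasapprox} applies verbatim with $\Xi_{2d}$ in place of $\Sigma_{2d}$ (a certificate $p\in\Xi_{2d}$ still entails $p\geq 0$ on $\Omega$), so $\underline{E}^*_d(f)\leq \underline{E}(f)$ for every $d$. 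Therefore $L:=\lim_{d\to\infty} \underline{E}^*_d(f)$ exists, is approached from below, and satisfies $L\leq \underline{E}(f)$.

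To reverse the inequality, fix $\epsilon>0$. By definition of the ADG supremum there exist $\lambda_0'\in\reals$ and $\lambda_j'\geq 0$ with $\lambda_0'\geq \underline{E}(f)-\epsilon/2$ and $f-\lambda_0'-\sum_{j=1}^{|G|}\lambda_j' g_j\geq 0$ on $\Omega$. Shifting the intercept by $\epsilon/2$, the polynomial
\[ p:= f-\bigl(\lambda_0'-\tfrac{\epsilon}{2}\bigr)-\sum_{j=1}^{|G|}\lambda_j' g_j \]
is bounded below by $\epsilon/2$ on $\Omega$, hence is \emph{strictly} positive there. Now I invoke Proposition~\ref{prop:Putinar} to obtain SOS polynomials $\sigma_0,\sigma_1,\dots,\sigma_{|C|}$ with $p=\sigma_0+\sum_{k=1}^{|C|}\sigma_k c_k$. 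Choosing $d_0$ so that $\deg\sigma_0\leq 2d_0$ and $\deg\sigma_k\leq 2d_0-2n_{c_k}$ for every $k$ places $p$ in $\Xi_{2d_0}$ and, by the monotonicity established above, in $\Xi_{2d}$ for all $d\geq d_0$. Therefore $(\lambda_0'-\epsilon/2,\lambda_1',\dots,\lambda_{|G|}')$ is feasible for the BADG program at every $d\geq d_0$, yielding $\underline{E}^*_d(f)\geq \lambda_0'-\epsilon/2 \geq \underline{E}(f)-\epsilon$. Letting first $d\to\infty$ and then $\epsilon\to 0$ gives $L\geq \underline{E}(f)$, and combined with the earlier bound this proves $L=\underline{E}(f)$.

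The main obstacle is concentrated in Proposition~\ref{prop:Putinar}, which is invoked as a black box; all the rest is routine feasibility/degree bookkeeping. The one non-cosmetic step is the $\epsilon/2$ shift of the intercept: without it the polynomial $p$ would only be nonnegative on $\Omega$, and Putinar's theorem fails for merely nonnegative polynomials. This is precisely the reason why BADG is, in general, a strict relaxation of ADG at any fixed $d$, while becoming tight only in the limit.
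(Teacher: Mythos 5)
Your proof is correct, and it is essentially the standard argument behind the result the paper invokes here: the paper itself gives no proof of Proposition~\ref{prop:convergence}, delegating it entirely to the citation of Lasserre's Theorem~4.1, whose proof is precisely your scheme (monotonicity of the relaxations plus the upper bound from Theorem~\ref{th:badgasapprox}, then the $\epsilon$-shift to obtain a strictly positive polynomial to which Putinar's Positivstellensatz applies). The only point worth flagging is that you tacitly use finiteness of $\underline{E}(f)$ when picking $\lambda_0'$, but this follows from compactness of $\Omega$ together with the avoiding-sure-loss hypothesis, so nothing is lost.
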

However, we have already  shown, for instance in the Covariance Inequality example, that it often happens that $\underline{E}^*(f) = \underline{E}(f)$ even for  finite $d$ \citep[Sec. 4.1]{lasserre2009moments}.


\section{Updating}
\label{sec:updating}
 We assume that Alice considers an event ``indicated'' by a certain  polynomial  $h(x)\geq0$, meaning  that  Alice knows that $x$ belongs to the set $A=\{x\in \reals^n: h(x)\geq0\}$. 
In ADG we will use this information to update (condition) her set of desirable gambles based on $A \subseteq \Omega$ \citep{walley1991,couso2011}. Let $G \subseteq \gambles$ be finite, and $\domain = \posi ( G \cup \gambles^+)$. Then
$\domain_{|A}=\{g\in \gambles : gI_A \in \domain\}$, where $I_A$ is the indicator function on $A$. 
From \eqref{eq:lowerpre}, it then follows that the conditional lower prevision of a gamble $f$ is 
$$
\begin{array}{rlrl}
  &\sup_{\lambda_j\geq0, \lambda_0} \lambda_0\\
  &s.t.\\
  &  (f-\lambda_0)I_A- \sum\limits_{j=1}^{|G|}\lambda_jg_j(x) \geq 0,  ~~~\forall x \in  \Omega,\\  
  \end{array}
$$
which is equivalent to
   \begin{equation}
    \label{eq:condtingamblegamble}
\begin{array}{rlrl}
  &\sup_{\lambda_j\geq0, \lambda_0} \lambda_0\\
  &s.t.\\
  &  f-\lambda_0- \sum\limits_{j=1}^{|G|}\lambda_jg_j(x) \geq 0,  ~~~\forall x \in  A,\\
  &~~~~~~~~~~~- \sum\limits_{j=1}^{|G|}\lambda_jg_j(x)\geq0,  ~~~\forall x \notin A.  
  \end{array}
\end{equation}
By writing $\lnot A:= \Omega \setminus A$, the dual of $\domain_{|A}$ coincides with

    \begin{equation}
    \label{eq:condtingamble}
\begin{array}{rlrl}
  &\inf\limits_{\mu_1 \in \mathcal{M}(A)^+,\mu_2 \in \mathcal{M}(\neg A)^+} \int\limits_A f d\mu_1 \\
  &s.t.\\
  &  \int\limits_A  d\mu_1 =1\\
  &\int\limits_A g_j d\mu_1 + \int\limits_{\neg A} g_j d\mu_2 \geq0,  ~~~\forall j=1,\dots,|G|.  
  \end{array}
    \end{equation}

  \begin{proposition}
   Assume that $\overline{E}(I_A)>0$ then the above optimisation problem is equivalent to
    \begin{equation}
    \label{eq:condtinprob00}
\begin{array}{cl}
    \sup\limits_{\nu \in \reals} \nu  : ~~\inf\limits_{\mu \in \mathcal{M}(\Omega)^+}  & \int (f-\nu)I_A d\mu\geq 0\\
   &s.t.\\
   &\int  d\mu  =1\\
   &\int g_j d\mu  \geq0,  ~~~\forall j=1,\dots,|G|.  
  \end{array}     
    \end{equation}
  \end{proposition}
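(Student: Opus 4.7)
The plan is to show equality of the two optimal values by a standard primal/dual pairing argument. Let me write $V^{\star}$ for the optimal value of \eqref{eq:condtingamble} and $W^{\star}$ for that of \eqref{eq:condtinprob00}. The hypothesis $\overline{E}(I_A)>0$ is needed precisely so that there exists at least one $\mu\in\mathcal{P}$ with $\mu(A)>0$; this makes the primal feasible (hence $V^{\star}<+\infty$ assuming $f$ bounded on $A$) and $W^{\star}$ bounded above, and enables the key measure-decomposition step in the second direction. Conceptually, this is the generalised Bayes rule: one wants to identify the conditional lower prevision obtained from the joint formulation with the largest $\nu$ for which $\underline{E}((f-\nu)I_A)\geq 0$.

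For $W^{\star}\leq V^{\star}$: take any $\nu$ feasible for \eqref{eq:condtinprob00} and any $(\mu_1,\mu_2)$ feasible for \eqref{eq:condtingamble}, and glue the two measures into a single candidate in $\mathcal{P}$. Set $\alpha := 1/(1+\mu_2(\neg A))>0$ and $\mu := \alpha(\mu_1+\mu_2)$, a nonnegative measure on $\Omega$ with $\int d\mu=1$. Feasibility of $(\mu_1,\mu_2)$ gives
$$\int g_j\, d\mu = \alpha\Bigl(\int_A g_j\, d\mu_1 + \int_{\neg A} g_j\, d\mu_2\Bigr)\geq 0,$$
so $\mu\in\mathcal{P}$. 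Then the constraint in \eqref{eq:condtinprob00} applied to $\mu$ yields $0\leq \int(f-\nu)I_A\, d\mu = \alpha\bigl(\int_A f\, d\mu_1-\nu\bigr)$, and dividing by $\alpha>0$ gives $\int_A f\, d\mu_1\geq \nu$. Taking the infimum over primal feasible pairs and then the supremum over $W$-feasible $\nu$ delivers $W^{\star}\leq V^{\star}$.

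For $V^{\star}\leq W^{\star}$: fix $\nu<V^{\star}$ and verify $\nu$ is feasible in \eqref{eq:condtinprob00}, i.e. $\int(f-\nu)I_A\, d\mu\geq 0$ for every $\mu\in\mathcal{P}$. If $\mu(A)=0$ this is trivial. Otherwise, with $c:=\mu(A)>0$, decompose $\mu$ via $\mu_1 := c^{-1}\mu|_A$ and $\mu_2 := c^{-1}\mu|_{\neg A}$; these are nonnegative with $\int_A d\mu_1 = 1$ and $\int_A g_j\, d\mu_1+\int_{\neg A} g_j\, d\mu_2 = c^{-1}\int g_j\, d\mu\geq 0$, hence feasible for the primal. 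Therefore $\int_A f\, d\mu_1\geq V^{\star}>\nu$ and so
$$\int(f-\nu)I_A\, d\mu \;=\; c\Bigl(\int_A f\, d\mu_1-\nu\Bigr)\;>\;0.$$
Thus $\nu$ is feasible for \eqref{eq:condtinprob00}, giving $W^{\star}\geq \nu$; letting $\nu\uparrow V^{\star}$ finishes the proof. The only real subtlety is the measure-gluing construction and its inverse, and checking that they preserve the constraint $\int g_j\, d\mu\geq 0$ up to the positive scalar $\alpha$ (resp. $c^{-1}$); the role of $\overline{E}(I_A)>0$ is to prevent the degenerate regime in which no $\mu\in\mathcal{P}$ charges $A$, in which case the constraint in \eqref{eq:condtinprob00} is vacuously satisfied and $W^{\star}=+\infty$ need not equal $V^{\star}$.
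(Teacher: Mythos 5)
Your argument is correct. Note, though, that the paper offers no proof of this proposition at all: it simply remarks that the construction is the regular extension and points to Walley's Appendix~J. So there is nothing to compare line by line; what you have supplied is a self-contained verification of the standard correspondence underlying that result, namely the bijection (up to positive scaling) between feasible pairs $(\mu_1,\mu_2)$ of \eqref{eq:condtingamble} and elements $\mu\in\mathcal{P}$ with $\mu(A)>0$, obtained by gluing with the normaliser $\alpha=1/(1+\mu_2(\neg A))$ in one direction and restricting and renormalising by $\mu(A)$ in the other. Both directions check out: the identity $\int(f-\nu)I_A\,d\mu=\alpha\bigl(\int_A f\,d\mu_1-\nu\bigr)$ uses $\mu_1(A)=1$ and the fact that $\mu_2$ charges only $\neg A$, and the case $\mu(A)=0$ in the converse direction is handled correctly as vacuous. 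One small sharpening you could make explicit: under the standing assumption that $\mathcal{P}\neq\emptyset$, the hypothesis $\overline{E}(I_A)>0$ is \emph{equivalent} to feasibility of \eqref{eq:condtingamble} (your own two constructions prove both implications), which is a cleaner way to state its role than the informal remark at the end of your proof; without it both programs degenerate and the stated equivalence can fail depending on the conventions adopted for infeasible problems.
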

This is also called  regular extension  \citep[Appendix J]{walley1991}.

How do we do that in  the BADG framework?
In BADG we cannot completely use this information because again $\Sigma_{2d}$ does not include  indicator functions.
However, we can still exploit the information in $A$ in a weaker way as shown in the previous section.
In fact, if we know that $h(x)\geq0$, then we    also know:
$$
\begin{aligned}
 \sigma_1(x) h(x)\geq 0 & ~~~\forall x \in A,\\
 -\sigma_2(x) h(x)\geq 0 & ~~~\forall x \in \lnot A,\\
\end{aligned}
$$
for $\sigma_i \in \Sigma_{2(d-n_{h})}$,  where the degree of  $h(x)$ is $2n_{h}$ if even or $2n_{h}-1$ if odd  (so that the degree of $\sigma_i(x) h(x)$ is less  than $2d$).
Hence, a possible way to define updating in BADG is as follows.

\begin{definition}
Let $G$ be a finite subset of $\gambles_{2d}$, and $\bdomain=\posi(G \cup \Xi_{2d})$ be a set of BADG in $\Omega$. Given the event $A=\{x \in  \Omega: h(x)\geq0\}$ for some polynomial 
$h(x)$, of degree $2n_{h}$ if even or $2n_{h}-1$ if odd, then, the set $\bdomain_{|A}$ that includes all the gambles $f\in \gambles_{2d}$ such that there exist
 $\lambda_i\geq0$, with $i=1, \dots, |G|$, and $\sigma_{i0} \in \Sigma_{2d}$,  $\sigma_{ij} \in \Sigma_{2(d-n_{c_j})}$,  $\sigma_{a},\sigma_{b} \in \Sigma_{2(d-n_h)}$:
\begin{equation}
\label{eq:condition}
 f -\sum_{i=1}^{|G|} \lambda_i g_i=\sigma_{10}+ \sum_{j=1}^{|C|}\sigma_{1j} c_j  + \sigma_{a} h ~~\text{ and } ~~ -\sum_{i=1}^{|G|} \lambda_i g_i=\sigma_{20}+ \sum_{j=1}^{|C|}\sigma_{2j} c_j - \sigma_{b} h
\end{equation} 
is  called the {\bf updated set of desirable gambles} based on  $A$. 
\end{definition}
The above Definition is consistent with that in \eqref{eq:condtingamblegamble}, since the condition 
$ f -\sum_{i=1}^{|G|} \lambda_i g_i=\sigma_{10}+ \sum_{j=1}^{|C|}\sigma_{1j} c_j  + \sigma_{a} h$ is sufficient  for 
$$
f -\sum_{i=1}^{|G|} \lambda_i g_i \geq 0, ~\forall ~x \in A\subseteq \Omega.
$$
   In fact, given that  $ \sigma_{10}+ \sum_{j=1}^{|C|}\sigma_{1j} c_j  + \sigma_{a} h$ is nonnegative
 in $A\subseteq \Omega$, if we can write  $f -\sum_{i=1}^{|G|} \lambda_i g_i$ as $\sigma_{10}+ \sum_{j=1}^{|C|}\sigma_{1j} c_j  + \sigma_{a} h$ then this implies
 that $f -\sum_{i=1}^{|G|} \lambda_i g_i$  is also nonnegative in $A\subseteq \Omega$.
Similarly, the condition $-\sum_{i=1}^{|G|} \lambda_i g_i=\sigma_{20}+ \sum_{j=1}^{|C|}\sigma_{2j} c_j - \sigma_{b} h$ is sufficient 
for 
$$
-\sum_{i=1}^{|G|} \lambda_i g_i \geq 0, ~\forall ~x \in \neg A.
$$
Observe that, in the state of full ignorance,  since $G$ is empty, there is only one constraint $f  =\sigma_0+\sum_{j=1}^{|C|}\sigma_{1j} c_j+ \sigma_a h$.

\begin{theorem}
Assume that $\bdomain_{|A}$ is BADG in $\Omega$. Then it holds that
$$
\begin{aligned}
\bdomain_{|A}^\bullet=&
\Big\{ y \in \reals^{s_n(d)}: 
  \exists  z \in \reals^{s_n(d)} \text{ such that }\\
 & M_{n,d}({y}),M_{n,d-n_{c_j}}(c_j{y}), M_{n,d}({z}),M_{n,d-n_{c_j}}(c_j{z})\geq0 ~\forall c_j,\\
 & M_{n,d-n_h}(hy),M_{n,d-n_h}(-hz)\geq0, ~ L_y(1)=1,\\
 & L_y(g)+L_z(g)\geq0, ~~\forall g=1,\dots,|G|\Big\}.
\end{aligned}
$$
\end{theorem}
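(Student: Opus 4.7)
The plan is to rewrite $\bdomain_{|A}$ as a Minkowski sum of closed convex cones, apply the standard polarity identities, and then translate each resulting dual condition into the moment/localizing-matrix language of Proposition~\ref{prop:dualSOS2}. Introducing the shorthand
\[
\Xi^{A}_{2d}=\Bigl\{\sigma_{0}+\sum_{j=1}^{|C|}\sigma_{j}c_{j}+\sigma_{a}h\Bigr\},\qquad
\Xi^{\neg A}_{2d}=\Bigl\{\sigma_{0}+\sum_{j=1}^{|C|}\sigma_{j}c_{j}-\sigma_{b}h\Bigr\},
\]
with the $\sigma$'s SOS of the prescribed degrees, the definition of $\bdomain_{|A}$ unfolds to: $f\in\bdomain_{|A}$ iff there exist $u\in\Xi^{A}_{2d}$, $v\in\Xi^{\neg A}_{2d}$ and $\lambda_{i}\ge 0$ with $f=u-v$ and $-v=\sum_{i}\lambda_{i}g_{i}\in\posi(G)$. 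Equivalently,
\[
\bdomain_{|A}=\Xi^{A}_{2d}\;+\;\bigl((-\Xi^{\neg A}_{2d})\cap\posi(G)\bigr).
\]

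Next I will apply the two basic polarity identities for closed convex cones in a finite-dimensional space, $(K_{1}+K_{2})^{\bullet}=K_{1}^{\bullet}\cap K_{2}^{\bullet}$ and $(K_{1}\cap K_{2})^{\bullet}=K_{1}^{\bullet}+K_{2}^{\bullet}$ (the latter modulo a closure), together with $(-K)^{\bullet}=-K^{\bullet}$. This yields
\[
\bdomain_{|A}^{\bullet}=(\Xi^{A}_{2d})^{\bullet}\;\cap\;\bigl(-(\Xi^{\neg A}_{2d})^{\bullet}+\posi(G)^{\bullet}\bigr),
\]
which rephrases as: $y\in\bdomain_{|A}^{\bullet}$ iff $y\in(\Xi^{A}_{2d})^{\bullet}$ and there exists $z\in(\Xi^{\neg A}_{2d})^{\bullet}$ such that $y+z\in\posi(G)^{\bullet}$.

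The last step consists in rewriting each dual cone condition with moment matrices. By the argument of Proposition~\ref{prop:dualSOS2} applied to the enlarged family $\{1,c_{1},\dots,c_{|C|},h\}$ (respectively $\{1,c_{1},\dots,c_{|C|},-h\}$), the condition $y\in(\Xi^{A}_{2d})^{\bullet}$ becomes $M_{n,d}(y)\ge 0$, $M_{n,d-n_{c_{j}}}(c_{j}y)\ge 0$ for every $c_{j}\in C$, and $M_{n,d-n_{h}}(hy)\ge 0$; the analogous translation for $z\in(\Xi^{\neg A}_{2d})^{\bullet}$ gives the conditions listed for $z$, with $M_{n,d-n_{h}}(-hz)\ge 0$ in place of the $h$-positivity for $y$. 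The condition $y+z\in\posi(G)^{\bullet}$ unfolds to $L_{y}(g)+L_{z}(g)\ge 0$ for every $g\in G$, and the normalization $L_{y}(1)=1$ is appended to restrict to states, exactly as in Theorem~\ref{th:dualSOSlocaliz}.

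The main obstacle I foresee is the closure that in general appears in the intersection-duality identity used above. Dropping the bar requires a mild qualification, but in our setting $\posi(G)^{\bullet}$ is polyhedral while $(\Xi^{\neg A}_{2d})^{\bullet}$ is a spectrahedral closed convex cone in the finite-dimensional space $\reals^{s_{n}(2d)}$, and a standard convex-analysis fact on sums involving a polyhedral cone ensures that $-(\Xi^{\neg A}_{2d})^{\bullet}+\posi(G)^{\bullet}$ is already closed, so the bar can be removed. A subsidiary technical point is the closedness of $\Xi^{A}_{2d}$ and $\Xi^{\neg A}_{2d}$ themselves, which follows from the closedness of the SOS cones $\Sigma_{2k}$ established by \cite{robinson1969some} combined with the degree bounds $d\ge\max_{j}n_{c_{j}},n_{h}$ built into Definition~\ref{def:badgOmega}.
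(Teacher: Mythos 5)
Your overall architecture is sound and is in fact a more explicit rendering of what the paper does in its one-paragraph sketch: the paper also dualizes the two defining identities in \eqref{eq:condition} separately (invoking Proposition~\ref{prop:dualSOS2} with the extra generator $h$, respectively $-h$), introduces $z$ for the second identity, and obtains the coupling $L_y(g)+L_z(g)\geq 0$ from the fact that the same $\lambda_i$'s appear in both equations. Your Minkowski decomposition $\bdomain_{|A}=\Xi^{A}_{2d}+\bigl((-\Xi^{\neg A}_{2d})\cap\posi(G)\bigr)$ together with the polar calculus makes precise exactly where each block of conditions in the dual comes from, which the paper leaves implicit. So the route is essentially the same, just better organized.

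There is, however, one step whose justification as written is wrong: the removal of the closure in $\bigl((-\Xi^{\neg A}_{2d})\cap\posi(G)\bigr)^{\bullet}=\overline{-(\Xi^{\neg A}_{2d})^{\bullet}+\posi(G)^{\bullet}}$. It is \emph{not} a standard fact that the sum of a polyhedral cone and an arbitrary closed convex cone is closed. Counterexample in $\reals^3$: let $K=\{(x,y,z):z\geq\sqrt{x^2+y^2}\}$ be the second-order cone and $P$ the ray generated by $(1,0,-1)$; then $(0,1,\varepsilon)\in K+P$ for every $\varepsilon>0$ (take $(-t,1,\sqrt{t^2+1})+(t,0,-t)$ with $t$ large) but $(0,1,0)\notin K+P$, so $K+P$ is not closed even though $P$ is polyhedral. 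The genuinely standard sufficient conditions are of a different nature: either a recession/lineality condition such as $(\Xi^{\neg A}_{2d})^{\bullet}\cap\posi(G)^{\bullet}$ being contained in the lineality spaces, or a relative-interior (Slater-type) constraint qualification like $\posi(G)\cap\mathrm{ri}(-\Xi^{\neg A}_{2d})\neq\emptyset$ (polyhedrality of $\posi(G)$ only lets you weaken ``$\mathrm{ri}$'' on the polyhedral side). Since $(\Xi^{\neg A}_{2d})^{\bullet}$ is a spectrahedral, non-polyhedral cone, none of this is automatic, and the coherence assumption on $\bdomain_{|A}$ does not obviously supply it. Without such a qualification your argument only yields the inclusion of the displayed set into $\bdomain_{|A}^{\bullet}$ (which holds unconditionally, since $K_1^{\bullet}+K_2^{\bullet}\subseteq(K_1\cap K_2)^{\bullet}$ always), not the equality. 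To be fair, the paper's own proof silently skips the same point (as well as the closedness of the truncated quadratic modules $\Xi^{A}_{2d},\Xi^{\neg A}_{2d}$, which you rightly flag and which also does not follow merely from closedness of the individual SOS cones, a sum of closed cones being not automatically closed); but since you invoke a specific ``standard fact'' that is false, you should either add the constraint qualification as a hypothesis or verify it in this setting.
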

\begin{proof}
The argument of the proof is similar to that of Proposition \ref{prop:dualSOS2}. Note in fact that to define the dual of
$ f -\sum_{i=1}^{|G|} \lambda_i g_i=\sigma_{10}+ \sum_{j=1}^{|C|}\sigma_{1j} c_j  + \sigma_{a} h$ we can exploit Proposition \ref{prop:dualSOS2} 
and account for the presence of the additional constraint $h\geq0$. The variable $z$ is introduced to define the dual of
$-\sum_{i=1}^{|G|} \lambda_i g_i=\sigma_{20}+ \sum_{j=1}^{|C|}\sigma_{2j} c_j - \sigma_{b} h$.
The constraint  $ L_y(g)+L_z(g)\geq0$ connects the two duals and arises due to the presence of the term $-\sum_{i=1}^{|G|} \lambda_i g_i$
in both equalities \eqref{eq:condition}.
\end{proof}

To understand the above dual set, we can compare it with 
\eqref{eq:condtingamble}. The vector $y$  has the same role of  $\mu_1$ and $z$ that of $\mu_2$.
The constraints $M_{n,d}({y}),M_{n,d}({z})$ are the  bounded rationality  analogous of $\mu_1,\mu_2 \in \mathcal{M}^+(\reals^n)$.
The constraints $M_{n,d-n_{c_j}}(c_j{y}),M_{n,d-n_{c_j}}(c_j{z})\geq0$ are the  bounded rationality analogous of the  support constraints  $\mu_1 \in \mathcal{M}^+(\Omega)$
and $\mu_2 \in \mathcal{M}^+(\Omega)$. The constraints $M_{n,d-n_h}(hy),M_{n,d-n_h}(-hz)\geq0$ are the bounded rationality representation of the constraints $\mu_1 \in \mathcal{M}^+(A)$ and $\mu_2 \in \mathcal{M}^+(\neg  A)$. Finally, $ L_y(g)+L_z(g)\geq0$ is equivalent to $\int_A g_j d\mu_1 + \int_{\neg  A} g_j d\mu_2 \geq0$.


\begin{theorem} Let $G$ be a finite subset of $\gambles_{2d}$, and $A=\{x \in  \Omega: h(x)\geq0\}$.  
 Assume that $\domain=\posi(G \cup \gambles^+)$ avoids sure loss and let $f \in \gambles_{2d}$. Then we have that
 $\underline{E}_{\bdomain_{|A}}(f) \leq  \underline{E}_{\domain_{|A}}(f)$ where $\bdomain=\posi(G \cup \Xi_{2d})$.
\end{theorem}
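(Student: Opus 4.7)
The plan is to mimic the strategy of Theorem \ref{th:badgasapprox}, which showed that an unconditional BADG feasibility constraint is strictly more demanding than its ADG counterpart. Here I need to argue the same at the level of the updated/conditional sets, using the representation in \eqref{eq:condition}.

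First, I would write out the two lower previsions side by side. For ADG, the conditional lower prevision $\underline{E}_{\domain_{|A}}(f)$ is the supremum of $\lambda_0$ subject to the two pointwise inequalities in \eqref{eq:condtingamblegamble}, namely $f-\lambda_0-\sum_j\lambda_j g_j\ge 0$ on $A$ and $-\sum_j\lambda_j g_j\ge 0$ on $\neg A$, with $\lambda_j\ge 0$. For BADG, $\underline{E}_{\bdomain_{|A}}(f)$ is the supremum of $\lambda_0$ subject to the two algebraic identities in \eqref{eq:condition} for suitable $\sigma_{ij}\in\Sigma$ and $\sigma_a,\sigma_b\in\Sigma_{2(d-n_h)}$, again with $\lambda_j\ge 0$.

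Next, I would show that any tuple $(\lambda_0,\lambda_1,\dots,\lambda_{|G|},\sigma_{10},\sigma_{1j},\sigma_a,\sigma_{20},\sigma_{2j},\sigma_b)$ feasible for the BADG program yields $(\lambda_0,\lambda_1,\dots,\lambda_{|G|})$ feasible for the ADG program. For the first identity in \eqref{eq:condition}: on $A\subseteq\Omega$ we have $c_j(x)\ge 0$, $h(x)\ge 0$, and each $\sigma$ is SOS hence nonnegative, so the right-hand side is $\ge 0$ on $A$, whence $f-\lambda_0-\sum_j\lambda_j g_j\ge 0$ on $A$. For the second identity: on $\neg A$ we have $h(x)<0$, so $-\sigma_b h\ge 0$, and on $\Omega$ the remaining summands $\sigma_{20}+\sum_j\sigma_{2j}c_j$ are nonnegative, so $-\sum_j\lambda_j g_j\ge 0$ on $\neg A\subseteq\Omega$. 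Hence the ADG pointwise constraints of \eqref{eq:condtingamblegamble} are satisfied with the same $\lambda_0$.

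Since every BADG-feasible $\lambda_0$ is ADG-feasible, the BADG supremum cannot exceed the ADG supremum, giving $\underline{E}_{\bdomain_{|A}}(f)\le\underline{E}_{\domain_{|A}}(f)$. The hypothesis that $\domain$ avoids sure loss ensures $\underline{E}_{\domain_{|A}}(f)$ is finite (and the BADG side is well-posed, since bA.2 is inherited from A.2 as already noted in the proof of Theorem \ref{th:badgasapprox}). There is no real obstacle here beyond bookkeeping; the content is entirely that SOS-weighted Positivstellensatz-style certificates over $\Omega\cap A$ and $\Omega\cap\neg A$ are sufficient, not necessary, for pointwise nonnegativity on those sets, which is precisely what distinguishes BADG from ADG.
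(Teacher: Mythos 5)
Your proposal is correct and follows essentially the same route as the paper's own proof: both observe that the SOS/Positivstellensatz identities defining the BADG conditional program are sufficient (but not necessary) certificates for the pointwise inequalities of the ADG conditional program in \eqref{eq:condtingamblegamble}, so every BADG-feasible $\lambda_0$ is ADG-feasible and the supremum can only decrease. Your write-up merely spells out in more detail the sign checks on $A$ and $\lnot A$ that the paper leaves implicit.
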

\begin{proof}
From the definition  of conditioning for ADG we aim to find the supremum $\lambda_0$ such that
$ (f-\lambda_0)I_{A} - \sum_{j=1}^{|G|}\lambda_jg_j(x) \geq 0 ~~~\forall x \in \reals^n$.
It can be rewritten as the two constraints on the left and relaxed to the constraints on the right:
$$
\begin{aligned}
  - \sum_{j=1}^{|G|}\lambda_jg_j(x)&\geq0  ~\forall x \in \lnot A,   & - \sum_{j=1}^{|G|}\lambda_jg_j(x) =\sigma_{20}+ \sum_{j=1}^{|C|}\sigma_{2j} c_j  - \sigma_{b} h,\vspace{1mm}\\
   f-\lambda_0- \sum_{j=1}^{|G|}\lambda_jg_j(x) &\geq 0  ~\forall x \in  A,  & f-\lambda_0- \sum_{j=1}^{|G|}\lambda_jg_j(x) =\sigma_{10}+ \sum_{j=1}^{|C|}\sigma_{1j} c_j  + \sigma_{a} h.
\end{aligned}
$$
where the equalities on the right must hold $~\forall x \in \reals^n$.
 \end{proof}
 
In case the set $A$ is defined by several polynomial constraints \\ $A=\{h_1(x)\geq0,\dots,h_{|A|}(x)\geq0\}$, we cannot use \eqref{eq:condition} because in general we cannot
write $\neg  A$ as a single polynomial constraint. However, we can relax  \eqref{eq:condition} to:
\begin{equation}
\label{eq:condition1}
 f -\sum_{i=1}^{|G|} \lambda_i g_i=\sigma_{10}+ \sum_{j=1}^{|C|}\sigma_{1j} c_j  + \sum_{i=1}^{|A|} \sigma_{ai} h_i ~~\text{ and } ~~ -\sum_{i=1}^{|G|} \lambda_i g_i=\sigma_{20}+ \sum_{j=1}^{|C|}\sigma_{2j} c_j,
\end{equation} 
which is a conservative approximation. It can actually be proven   that the complement of a semi-algebraic set is the union of semi-algebraic sets \citep{tarski1951decision,seidenberg1954new}. Hence, the exact way to consider the constraints $\neg  A$ is to translate them in a bunch of SDP problems.

\section{Case study: European Options}\label{sec:numex}

\begin{table}[ht]
\scalebox{0.47}{
\begin{tabular}{lrrrrrrrrrrrrrrrrrrrrrr}
 \\\textbf{ask} &    53.8 &    49.5 &    45.2 &    41.1 &    29.3 &    25.7 &    22.3 &    19.1 &    16.2 &    13.6 &    11.3 &     9.2 &     7.5 &     6.1 &     4.9 &     3.9 &     3.2 &     2.15 &     1.55 &     1.15 &     0.90 &     0.3 \\\textbf{bid} &    53.3 &    49 &    44.8 &    40.6 &    28.9 &    25.3 &    21.9 &    18.7 &    15.9 &    13.3 &    11 &     8.9 &     7.2 &     5.8 &     4.6 &     3.7 &     3 &     2&     1.40 &     1&     0.75 &     0.2 \\\hline
 \textbf{strike}  &  2490 &  2495 &  2500 &  2505 &  2520 &  2525 &  2530 &  2535 &  2540 &  2545 &  2550 &  2555 &  2560 &  2565 &  2570 &  2575 &  2580 &  2590&  2600&  2610&  2620&  2675 
\end{tabular}
}
\caption{Ask and bid price for a call option on the S\&P500 index: maturity 30days, quote day 2017-10-03.}
\label{tab:1}
\end{table}

As an example of application of BADG, we consider a problem from finance. An European call option on an underlying security  with strike $k$ and maturity $T$ gives the holder the option of buying the underlying security  at price $k$ at time $T$. If the price $S_T$ is more than $k$, then the holder will exercise the option and make a profit of $S_T - k$.
Conversely, if it is less than $k$, the holder will not exercise and does not make a profit. Thus, the payoff of this option is $\max(S_T - k, 0)$. Since options are traded, a key problem in financial economics is to determine the belief of the market about the future value of $S_T$ from the ask and bid\footnote{The bid price is the max price that a buyer is willing to pay for a security. The ask price is the min price that a seller is willing to receive.} prices of these options.
Table \ref{tab:1} shows the ask and bid price for 22 call options on the S\&P500 index. 
What does the first column of the table mean? It means that ``the market'' believes that the gambles
$\max(S_T-2490,0)-53.3$ and $53.8-\max(S_T-2490,0)$ are desirable, since there exists someone that is 
willing to sell the option $\max(S_T-2490,0)$ at price $53.8$ and to buy it at price $53.3$.
As inference, we aim to compute the 
market's selling and buying price  for the gamble $f=I_{\{[c,\infty)\}}(S_T)$ for some $c \in \reals$.\\
In this case, the set of desirable gambles includes 44 gambles:
$$
\begin{aligned}
G=\{\max(S_T-2490,0)-53.3,53.8-\max(S_T-2490,0),\dots,\\
\max(S_T-2675,0)-0.2,0.3-\max(S_T-2675,0)\}.
\end{aligned}
$$
Note that, for simplicity, we have assumed that the discount factor is one.\footnote{The discount factor is the factor by which a future cash flow must be multiplied in order to obtain the present value.}
Moreover, observe that the gambles in $G$ and $f$ are piecewise polynomials. 
We aim to apply BADG to solve this problem by exploiting the same trick used in Example \ref{sec:markov2}.
Consider for instance the case $G$ only includes $\max(S_T-2490,0)-53.3,53.8-\max(S_T-2490,0),\max(S_T-2495,0)-49,49.5-\max(S_T-2495,0)$ and $c=2490$, then the lower prevision of $f$ can be computed in BADG as:
  \begin{equation}
\label{eq:lowerprea}
{\begin{array}{l}
\sup\limits_{\lambda_0 \in \mathbb{R},\lambda_j\geq0} \lambda_0\\
~~~~s.t.~~~\\
f-\lambda_0-53.8 \lambda_1+53.3 \lambda_2-49.5 \lambda_3+49 \lambda_4=\sigma_0(S_T) +(2490-S_T)\sigma_3(S_T),\vspace{2mm}\\
f-\lambda_0+(S_T-2490) \lambda_1-(S_T-2490) \lambda_2-53.8 \lambda_1+53.3 \lambda_2-49.5 \lambda_3+49 \lambda_4\\
=\sigma_1(S_T) +(S_T-2490) \sigma_4(S_T)+(2495-S_T)\sigma_5(S_T),\vspace{2mm}\\
f-\lambda_0+(S_T-2490) \lambda_1-(S_T-2490) \lambda_2+(S_T-2495) \lambda_3-(S_T-2495) \lambda_4\\
-53.8 \lambda_1+53.3 \lambda_2-49.5 \lambda_3+49 \lambda_4=\sigma_2(S_T) +(S_T-2495)\sigma_6(S_T),\\
\end{array}}
\end{equation}
which, exploiting the definition of $f$, is equal to
  \begin{equation}
\label{eq:lowerpreb}
{\begin{array}{l}
\sup\limits_{\lambda_0 \in \mathbb{R},\lambda_j\geq0} \lambda_0\\
~~~~s.t.~~~\\
-\lambda_0-53.8 \lambda_1+53.3 \lambda_2-49.5 \lambda_3+49 \lambda_4=\sigma_0(S_T) +(2490-S_T)\sigma_3(S_T),\vspace{2mm}\\
1-\lambda_0+(S_T-2490) \lambda_1-(S_T-2490) \lambda_2-53.8 \lambda_1+53.3 \lambda_2-49.5 \lambda_3+49 \lambda_4\\
=\sigma_1(S_T) +(S_T-2490) \sigma_4(S_T)+(2495-S_T)\sigma_5(S_T),\vspace{2mm}\\
1-\lambda_0+(S_T-2490) \lambda_1-(S_T-2490) \lambda_2+(S_T-2495) \lambda_3-(S_T-2495) \lambda_4\\
-53.8 \lambda_1+53.3 \lambda_2-49.5 \lambda_3+49 \lambda_4=\sigma_2(S_T) +(S_T-2495)\sigma_6(S_T),\\
\end{array}}
\end{equation}
with $\sigma_i(S_T) \in \Sigma_{0}$ for $i=3,4,5,6$ and $\sigma_i(S_T) \in \Sigma_{2}$ for $i=0,1,2$.
This approach can be generalised to all 44 gambles and allows us to deal with piecewise  polynomials.  \\
The application of SOS polynomials to European option pricing was first proposed by \cite{lasserre2006pricing}. The authors  consider 
the problem of pricing an option given information (moments) on the probability density function of $S_T$. Here, we are considering the inverse problem and we are
interested in studying it from a desirable gambles point of view and, in particular, to investigate the effect of the updating in the inference.\\
In particular, for this example, the BADG lower and upper previsions of $f$  are shown in Figure \ref{fig:1}. It is worth noticing that they coincide with those computed using ADG  -- we have verified
it numerically by discretising $S_T$ and solving a linear programming problem. Note that  the discretisation approach can only be used when the number of variables is small and, in any case, provides only an inner approximation of the lower and upper previsions. However, since in this case BADG and ADG coincide,  we can refer to these lower and upper previsions as the lower and upper probabilities
of the event $S_T>c$.\\
Assume that we aim to update our inference given the information ``$S_T\geq 2540$ is true'', meaning  that  Alice knows that $x$ belongs to the set $A=\{S_T\in \reals: S_T-2540\geq0\}$. 
We can apply the approach discussed in Section \ref{sec:updating} and compute an updated set of desirable gambles. The corresponding lower and upper probabilities
for the event $S_T>c$ are shown in  Figure \ref{fig:2} (right, blue) together with the previous lower and upper  probability for comparison.\\
Options' data includes other information apart from bid and ask prices, such as trading volume for the day. We can use such information for updating, for example by using the trading volume to build a weighting function across the strikes. An example of weighting function $W(S_T)$ is shown in Figure \ref{fig:2} (left). We can then compute an updated BADG by replacing $f(S_T)-\lambda_0$ in \eqref{eq:lowerprea} with $(f(S_T)-\lambda_0)W(S_T)$. This is another way of defining an updating rule in BADG that is similar to updating with probability density functions in standard probability.
The updated lower probability is shown in Figure \ref{fig:2} (right, green).

\begin{figure}[!htp]
\centering
 \includegraphics[width=8cm]{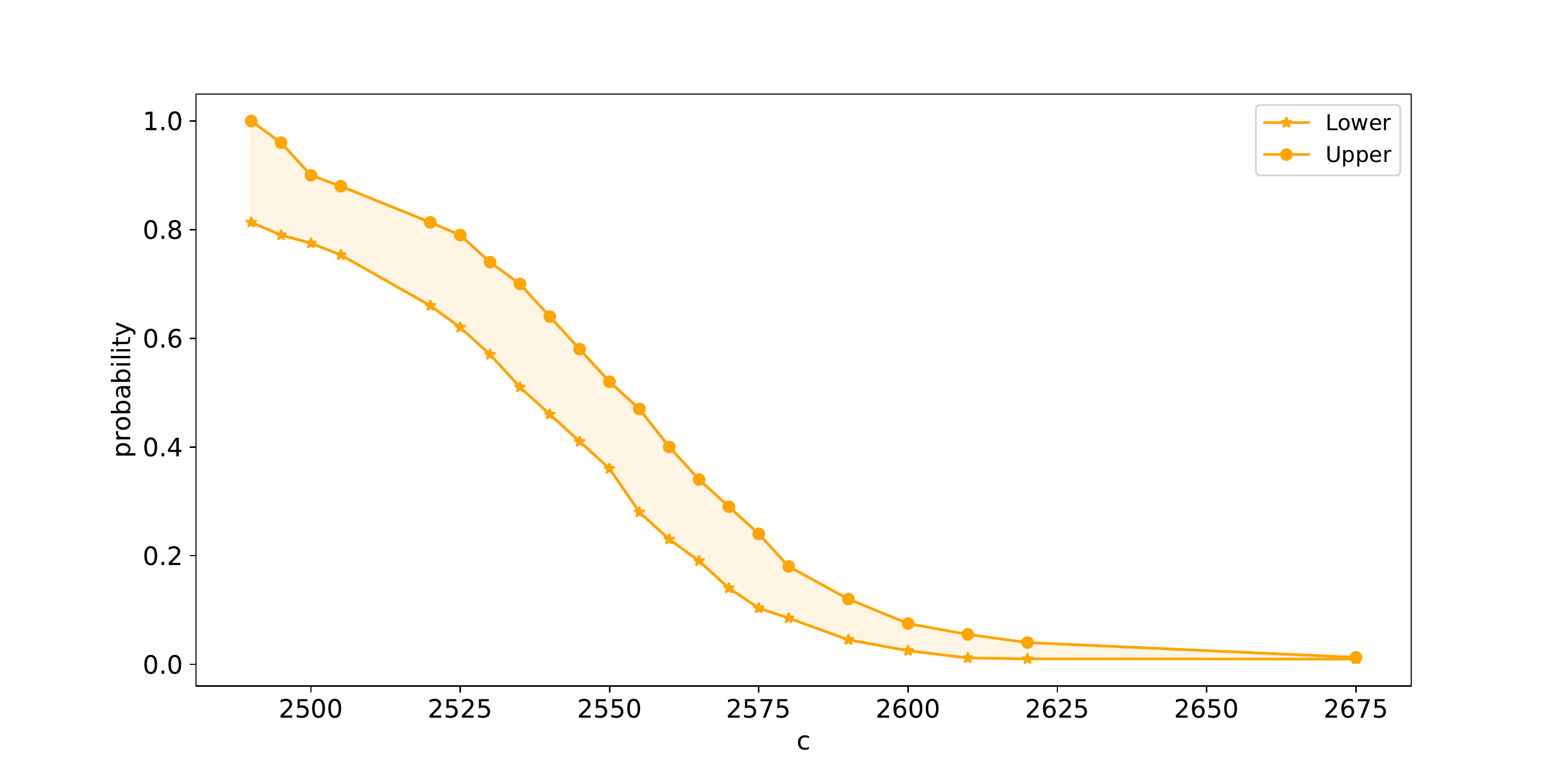}
 \caption{Lower and upper probability that $S_T>c$ for BADG.}
 \label{fig:1}
\end{figure}

\begin{figure*}[t!]
    \centering
    \begin{subfigure}[t]{0.5\textwidth}
        \centering
        \includegraphics[width=6cm]{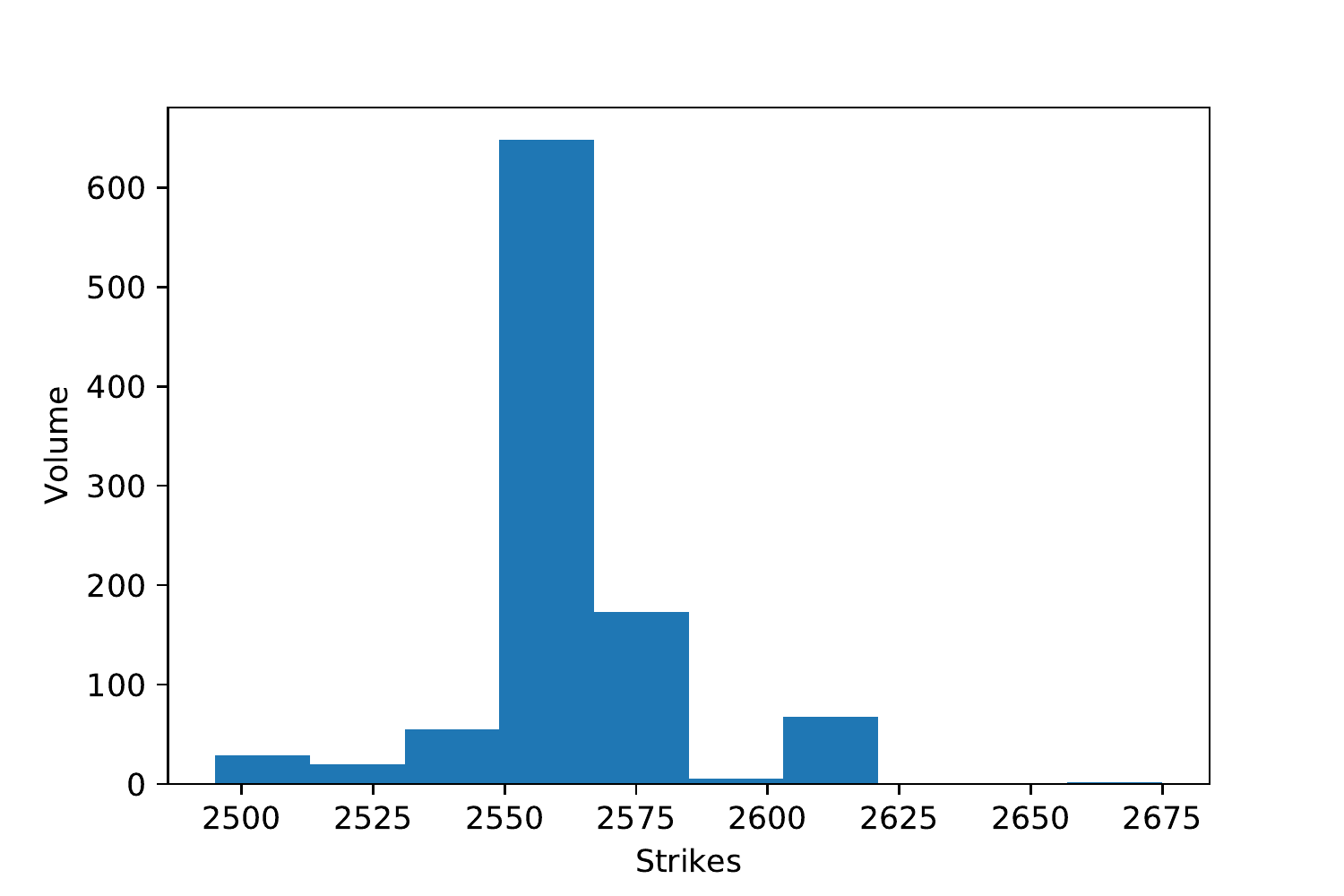}
        \caption{Weighting function}
    \end{subfigure}%
    ~ 
    \begin{subfigure}[t]{0.5\textwidth}
        \centering
        \includegraphics[width=8cm]{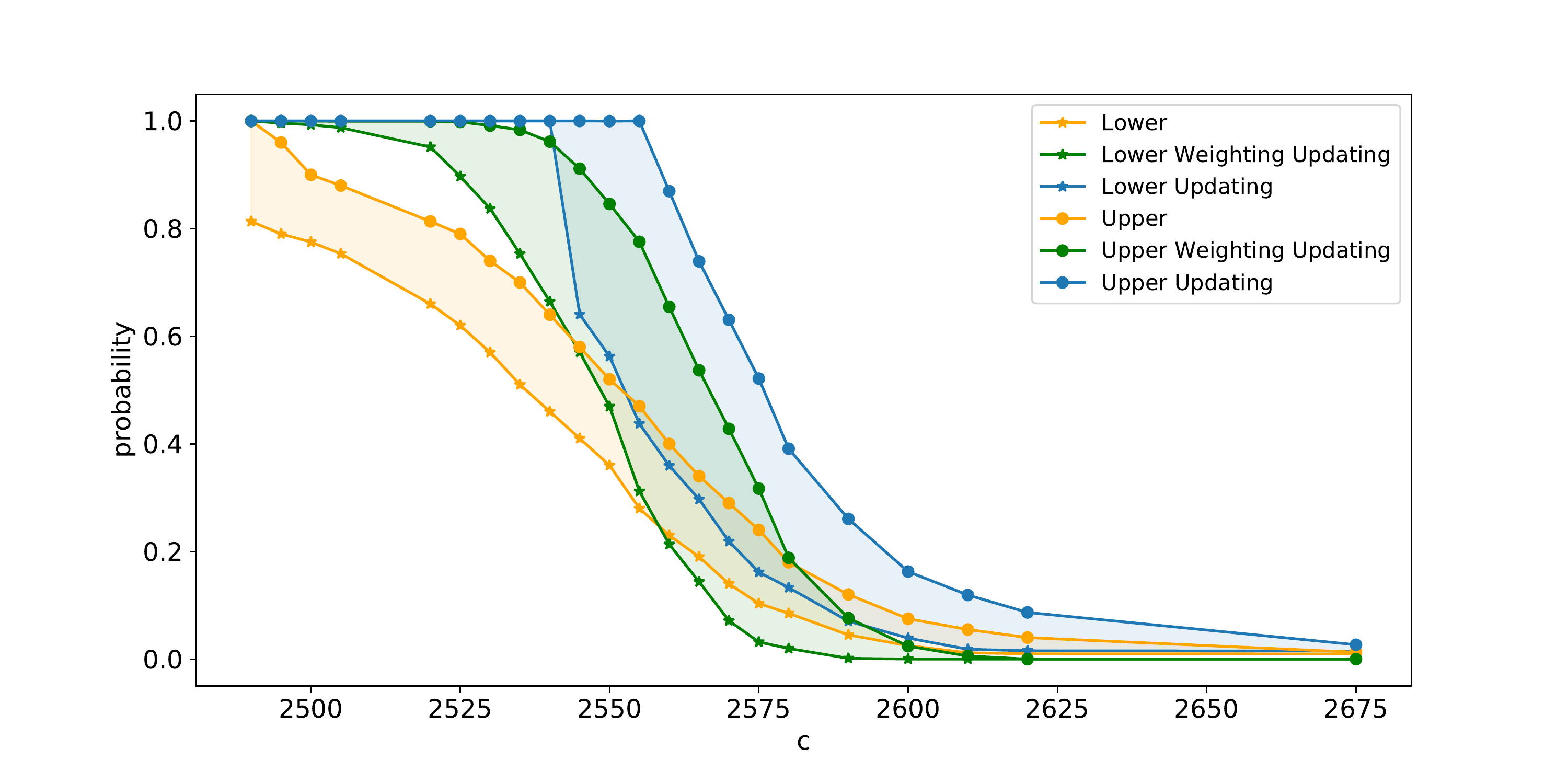}
        \caption{Lower/Upper and updated lower/upper probabilities that $S_T>c$ for BADG.}
    \end{subfigure}
    \caption{Updating}
     \label{fig:2}
\end{figure*}

\section*{Acknowledgement}
 We   thank the anonymous reviewers for their careful reading of the manuscript. Their 
  insightful comments and suggestions have greatly helped improve and clarify this work. 

 This work was partially supported  by the Swiss NRP 75 Big Data grant no. 407540-167199.


\bibliography{biblio}

\end{document}